\newtheorem{theorem}{Theorem}[section]
\newtheorem{lemma}[theorem]{Lemma}
\newtheorem{proposition}[theorem]{Proposition}
\newtheorem{corollary}[theorem]{Corollary}
\theoremstyle{definition}
\newtheorem{definition}[theorem]{Definition}
\newtheorem{remark}{Remark}
\newtheorem{example}{Example}
\numberwithin{equation}{section}
\DeclareMathOperator{\diam}{diam}
\DeclareMathOperator{\vol}{Vol}
\author{Wei Zhao}
\address{
Department of Mathematics\\
East China University of Science and Technology\\
Shanghai, China}
\email{szhao\underline{ }wei@yahoo.com}
\author{Yibing Shen}
\address{
 Center of Mathematical Sciences\\
Zhejiang University\\
Hangzhou, China}
\email{yibingshen@zju.edu.cn}
\thanks{This work was supported by National
Natural Science Foundation of China, Tian Yuan Foundation (No. 11426108) and
the Fundamental Research Funds for the Central Universities}
\keywords{Finsler manifold, convexity radius, injectivity radius, finiteness theorem} \subjclass[2010]{Primary 53B40,
Secondary 53C23}
\begin{document}

\title[]{A Cheeger type finiteness theorem for Finsler manifolds}

\begin{abstract}
In this paper, we mainly establish a Cheeger type finiteness theorem for Berwald manifolds.  In order to do this, we study the injectivity radius and the convex radius of a Finsler manifold. A Cheeger type estimate on injectivity radii for Finsler manifolds is given and the existence of the center of mass of a Berwald manifold is proved.
\end{abstract}
\maketitle

\section{Introduction}
The estimate of injectivity radius plays an important role in global differential geometry. For a compact Riemannian manifold, Klingenberg \cite{K} gives a lower bound for the injectivity radius in terms of an upper bound for the sectional curvature and a lower bound for the
length of simple closed geodesics. And Cheeger in \cite{Ch} points out there exists a lower bound for the
length of simple closed geodesics, which together with Klingenberg's result yields a positive constant $c_n(k,D,V)$ such that if an arbitrary compact Riemannain $n$-manfiold satisfies $|\mathbf{K}_M|\leq k$, $\diam(M)\leq D$ and $\vol(M)\geq V$, then the injectivity radius $\mathfrak{i}_M\geq c_n(k,D,V)$.
Eight years later, Heintze and Karcher in \cite{HK} give the explicit expression of $c_n(k,D,V)$ by their volume comparison theorem.  Refer to \cite{AM,C2,P} for more details.

Finsler geometry is just Riemannian geometry without quadratic restriction.
It is an natural question that whether an analogue of the above estimate
still holds in the Finslerian case.
To answer this question, we introduce some non-Riemannian geometric quantities first: Given a Finsler manifold, let $\Lambda_F$ and $\mathbf{T}_M$ denotes its  uniformity constant and T-curvature, respectively (see \cite{E,Sh1} or Sect.\,2,3 for the definitions). It should be remarked that $\Lambda_F=1$ if and only if $F$ is Riemannian, while $\mathbf{T}_M=0$ if and only if $F$ is Berwalden.
We then shall establish the following estimate.
\begin{theorem}\label{injectiveestimeat}
Let $(M,F)$ be a compact Finsler $n$-manifold with $|\mathbf{K}_M|\leq k$, $\mathbf{T}_M\leq \tau$, $\Lambda_F\leq\Lambda$, $\diam(M)\leq D$ and $\mu(M)\geq V$, where $\mu(M)$ is either the Busemann-Hausdorff volume or the Holmes-Thompson volume of $M$. Then
\[
\mathfrak{i}_M\geq\frac{1}{1+\Lambda^{\frac12}}\min\left\{\frac{(1+\Lambda^{-\frac12})\pi}{\sqrt{k}},\,\frac{\mu(M)}{c_{n-2}\Lambda^\frac{3n}{2}\left[\frac{\mathfrak{s}^{n-1}_{-k}\left(D\right)}{n-1}+\Lambda^{\frac12}\tau\int_0^{D}\mathfrak{s}^{n-1}_{-k}(t) dt\right]}\right\}.
\]
\end{theorem}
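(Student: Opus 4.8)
\emph{Proof plan.} The plan is to run the Klingenberg--Cheeger--Heintze--Karcher scheme, modified to absorb two non-Riemannian features: the non-reversibility of $F$, quantified through $\overleftarrow F\le\Lambda_F^{1/2}F$ where $\overleftarrow F(x,y):=F(x,-y)$, and the failure of the normal exponential map of a curve to satisfy a Riemannian Jacobi equation, whose correction is governed by $\mathbf T_M$. Since $M$ is compact, $\mathfrak i_{(\cdot)}$ is continuous, so I fix $p_0$ with $\mathfrak i_{p_0}=\mathfrak i_M$ and a nearest cut point $q_0$ of $p_0$, so that $d(p_0,q_0)=\mathfrak i_M$, and split along the cut-point dichotomy. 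If $q_0$ is conjugate to $p_0$ along a minimizing geodesic, then $\mathfrak i_M$ equals the first conjugate time along that geodesic, which is $\ge\pi/\sqrt k$ by the Finsler Rauch comparison theorem under $|\mathbf K_M|\le k$; since $\tfrac{1+\Lambda^{-1/2}}{1+\Lambda^{1/2}}=\Lambda^{-1/2}\le1$, the asserted bound holds in this case with room to spare.

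In the other case there are two distinct unit-speed minimizing geodesics $\sigma_1,\sigma_2\colon[0,\mathfrak i_M]\to M$ from $p_0$ to $q_0$. Concatenating $\sigma_1$ (forward, $F$-length $\mathfrak i_M$) with the reverse $\overline{\sigma_2}$ (an $\overleftarrow F$-geodesic whose $F$-length is $\le\Lambda^{1/2}\mathfrak i_M$ by the uniformity inequality) yields a closed curve $c$ of $F$-length $L\le(1+\Lambda^{1/2})\mathfrak i_M$; Klingenberg's first-variation argument, read through the Legendre transform of $F$, forces the two arcs to meet with the best regularity the asymmetry permits, leaving only lower-order "cap" contributions at the two junctions. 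I would then prove the volume comparison
\[
\mu(M)\ \le\ L\cdot c_{n-2}\,\Lambda^{3n/2}\Big[\tfrac{\mathfrak s_{-k}^{\,n-1}(D)}{n-1}+\Lambda^{1/2}\tau\!\int_0^D\mathfrak s_{-k}^{\,n-1}(t)\,dt\Big],
\]
which together with $L\le(1+\Lambda^{1/2})\mathfrak i_M$ gives exactly the second entry of the minimum. First, $M$ is covered by the $F$-normal exponential map of $c$ of radius $D$: for $x\in M$ the diameter bound gives $d(c(s),x)\le D$ for all $s$, and choosing $s$ to minimize $d(c(\cdot),x)$, the first-variation formula forces the minimizing geodesic from $c(s)$ to $x$ to leave $c$ along a unit $F$-normal vector of length $\le D$ (points attaining the minimum at a junction of $c$ lie in the caps at $p_0$ and $q_0$). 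Hence $\mu(M)\le\int_0^L\!\int_{\Sigma_s}\!\int_0^D\widehat J(s,\nu,r)\,dr\,d\nu\,ds$ plus the two caps, with $\Sigma_s$ the $(n-2)$-dimensional $F$-unit normal sphere to $c$ at $c(s)$ and $\widehat J$ the Jacobian of $(s,\nu,r)\mapsto\exp_{c(s)}(r\nu)$.

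The heart of the matter will be bounding $\widehat J$. Along each normal ray $r\mapsto\exp_{c(s)}(r\nu)$, which is an $F$-geodesic, the transverse Jacobi fields are one field coming from $\partial_s$ (nonzero value, controlled covariant derivative at $c$) and $n-2$ fields vanishing at $c$; equivalently the shape operator of the tube hypersurfaces about $c$ satisfies a Riccati inequality whose right-hand side is the flag curvature, bounded below by $-k$, plus a source term. That source term collects the T-curvature of these hypersurfaces together with the "acceleration defect" of $\overline{\sigma_2}$ — which, being an $\overleftarrow F$-geodesic, has $\nabla^F_{\dot c}\dot c\ne0$ — and both are bounded by $\mathbf T_M\le\tau$ from the definition of the T-curvature. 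A Riccati/Jacobi comparison then gives $\widehat J(s,\nu,r)\le\mathfrak s_{-k}'(r)\mathfrak s_{-k}^{n-2}(r)+(\mathrm{const})\cdot\tau\,\mathfrak s_{-k}^{n-1}(r)$ once the uniformity constant is used to pass between $g_\nu$, $g_{\dot c}$, the measure on $\Sigma_s$, and the volume form $\mu$; integrating in $r$ via $\int_0^D\mathfrak s_{-k}'\mathfrak s_{-k}^{n-2}=\mathfrak s_{-k}^{n-1}(D)/(n-1)$, then over $\Sigma_s$ (measure $\le c_{n-2}$ up to a power of $\Lambda$) and over $s\in[0,L]$, and adding the caps (comparable to a fixed multiple of $\int_0^D\mathfrak s_{-k}^{n-1}$, absorbed into the bracket), yields the displayed inequality with the accumulated power of $\Lambda$ coming out as $\Lambda^{3n/2}$. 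Rearranging and combining with the conjugate case completes the proof, identically for the Busemann--Hausdorff and the Holmes--Thompson volume since every comparison used is available for both.

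I expect the main obstacle to be this Finsler Heintze--Karcher step: writing down the Riccati inequality for the tubes about $c$ with the correct T-curvature source, in particular checking that the reversed-geodesic arc contributes only a $\mathbf T_M$-bounded defect rather than an uncontrolled one, and then tracking every appearance of the uniformity constant — the indicatrix versus a Euclidean sphere, $g_\nu$ versus $g_{\dot c}$ in the curvature and normal-frame terms, and $\mu$ versus the induced Riemannian volume — so that the exponent is exactly $3n/2$ and the coefficient on the integral exactly $\Lambda^{1/2}\tau$. A secondary difficulty is the Klingenberg step: the asymmetry prevents the two minimizing geodesics from closing up into an $F$-geodesic, so one must work with the piecewise $F$/$\overleftarrow F$ curve $c$ and control its corner regions; this is also where the prefactor $1/(1+\Lambda^{1/2})$ comes from.
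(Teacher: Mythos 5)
The overall scaffold — Klingenberg plus Heintze--Karcher — matches the paper, and your conjugate-point case and the arithmetic giving the prefactor $1/(1+\Lambda^{1/2})$ and $\lambda_F\le\Lambda^{1/2}$ are all correct. But the way you set up the non-conjugate case diverges from the paper at a point where the difference really matters, and it opens a gap you flag but do not close.

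The paper does not concatenate $\sigma_1$ with the reverse $\overline{\sigma_2}$. Its Theorem \ref{closedgeodesci} (the Finslerian Klingenberg step, following Rademacher) introduces the symmetrized distance $\tilde d(p,q)=\tfrac12(d(p,q)+d(q,p))$ and $\tilde{\mathfrak{i}}_M$, takes a minimizing pair $(p,q)$ for $\tilde{\mathfrak{i}}_M$, and works with \emph{three} geodesics: two forward minimizers $c_1,c_2$ from $p$ to $q$ and a forward minimizer $c_3$ from $q$ back to $p$. A Legendre-transform/first-variation argument (Lemmas \ref{threevectors} and \ref{hypersur}) shows that one of $c_1*c_3$ or $c_2*c_3$ is smooth at $q$, and a separate continuity-of-cut-locus argument (Step 3) shows smoothness at $p$. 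The output is a genuine simple closed $F$-geodesic of length $2\tilde{\mathfrak{i}}_M$; plugging into $\mathfrak{i}_M\ge\frac{2}{1+\lambda_F}\tilde{\mathfrak{i}}_M$ gives the prefactor. Then Theorem \ref{closedgeodesci2}, quoted from \cite{Z2}, gives the tube-volume lower bound on the length of any simple closed $F$-geodesic, and the two theorems combine immediately.

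Your route instead builds the loop $c=\sigma_1*\overline{\sigma_2}$. In the nonreversible case $\overline{\sigma_2}$ is an $\overleftarrow{F}$-geodesic but \emph{not} an $F$-geodesic on its entire interior, not just at two corners, and the Remark after Lemma \ref{hypersur} in the paper makes exactly this point: $\mathcal{L}_{\dot\sigma_1}=-\mathcal{L}_{\dot\sigma_2}$ at $q_0$ does not give $\dot\sigma_1=-\dot\sigma_2$ because the Legendre transform is nonlinear. So the tube-volume estimate you would need is for a non-geodesic loop, and you must control the acceleration defect $D^{-T}_{T}T$ of the reversed arc. You conjecture this defect is bounded by $\mathbf{T}_M$; that is not supported by the definition in Sect.\,3, which bounds only the scalar $g_y(\nabla^V_vV-\nabla^Y_vV,y)$, i.e.\ a single $g_y$-component, not the full vector $\nabla^{-T}_TT$. (It is consistent that the defect vanishes in the Berwald case since there the connection is $y$-independent, but that is a much weaker statement.) Without that control the Riccati/Jacobi comparison along normal rays from $\overline{\sigma_2}$ does not go through, so the inequality $\mu(M)\le L\cdot c_{n-2}\Lambda^{3n/2}[\cdots]$ is not established by your sketch. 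You also do not address why the loop should satisfy $\mathcal{L}_{\dot\sigma_1(0)}=-\mathcal{L}_{\dot{\overline{\sigma}}_2(0)}$ at $p_0$ — the paper needs a separate continuity argument to get closure at the second endpoint, and your ``caps'' heuristic does not substitute for it.

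Concretely: either switch to the paper's three-geodesic construction to obtain a bona fide closed $F$-geodesic, and then apply the pre-existing length estimate of Theorem \ref{closedgeodesci2}; or supply an actual proof that the reversal defect of an $F$-geodesic is bounded by a function of $\mathbf{T}_M$ and $\Lambda_F$ and rebuild the tube-volume estimate for piecewise $F/\overleftarrow{F}$ loops from scratch. As written, your non-conjugate case is incomplete.
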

The theorem above not only implies the estimate in the Riemannian case, but also points out that the injectivity radius is inversely proportional to  the uniformity constant. In fact, we have the following non-Riemannian example. Also refer to \cite{Z2} for more examples.
\begin{example}Define a sequence of Berwald metrics on $\mathbb{T}^2=\mathbb{S}^1\times \mathbb{S}^1$ by
\[
F_n:=\alpha+\left(1-\frac{1}{n}\right)\beta,\ n\geq 1,
\]
where $\alpha$ is the canonical Riemannian product metric
on $\mathbb{T}^2$, and $\beta$ is a parallel 1-form on $\mathbb{T}^2$ with $\|\beta\|_\alpha=1$. Then $\{(\mathbb{T}^2,F_n)\}_n$ satisfy
\[
\mathbf{K}_n=0,\  \diam_n(\mathbb{T}^2)\leq 2(\sqrt{2}+1)\pi, \ \mu_n(\mathbb{T}^2)=4\pi^2,
\]
where $\mu_n$ is the  Holmes-Thompson volume of $(\mathbb{T}^2,F_n)$.
However, the injectivity radius $\mathfrak{i}_n(\mathbb{T}^2)\rightarrow 0$ while the uniformity constant $\Lambda_n\rightarrow +\infty$  as $n\rightarrow +\infty$.
\end{example}

From above, one can see that a Berwald manifold cannot collapse if we control  the lower bound of the volume,  the upper bounds of the diameter, the uniformity constant and the bounds of the flag curvature. Thus, according to \cite{Ch,Pe}, it is an natural question that whether the class of such Berwald $n$-manifolds is finite up to  homeomorphism or diffeomorphism?
The answer is affirmative. In fact, we shall establish
the following Cheeger type finiteness theorem. Refer to \cite{Sh1,YZ,Z} for other finiteness theorems for Finsler manifolds.
\begin{theorem}\label{Cheegetype}
Given $n\in\mathbb{N}$, $\Lambda\geq 1$, $k\geq 0$ and $V,D>0$, there exist only finitely many diffeomorphism classes of compact Berwald
$n$-manifolds $(M,F)$ satisfying
\[
\Lambda_F\leq \Lambda,\ |\mathbf{K}_M|\leq k,\  \mu(M)\geq V,\ \diam(M)\leq D,
\]
where $\mu(M)$ is either the Busemann-Hausdorff volume or the Holmes-Thompson volume of $M$.
\end{theorem}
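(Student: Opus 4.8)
The plan is to combine the injectivity- and convexity-radius estimates above with a center-of-mass smoothing argument, in the spirit of Cheeger, Grove--Karcher and Peters. (A quicker but less self-contained route would invoke Szab\'o's rigidity theorem: the Chern connection of a Berwald metric $F$ is the Levi--Civita connection of the Riemannian metric $h$ obtained by averaging the fundamental tensor of $F$ over its unit spheres, and $\Lambda_F\le\Lambda$ forces $\Lambda^{-1}h\le g_y\le\Lambda h$ for every $y\neq0$; hence $\diam_h(M)\le\sqrt{\Lambda}\,D$, the $h$-volume of $M$ is bounded below by $c(n,\Lambda)V$, and, since the flag curvature of $F$ and the sectional curvature of $h$ are computed from the \emph{same} curvature tensor, $|\mathbf K_M|\le k$ yields $|\se_h|\le\Lambda k$, so Cheeger's Riemannian finiteness theorem applies. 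I will instead argue intrinsically, since the results above furnish exactly the input one needs.)

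\emph{Uniform geometry.} Fix the class $\mathcal M=\mathcal M(n,\Lambda,k,V,D)$. Since a Berwald manifold has $\mathbf T_M=0$, Theorem~\ref{injectiveestimeat} with $\tau=0$ gives $\mathfrak i_M\ge i_0(n,\Lambda,k,V,D)>0$ for every $(M,F)\in\mathcal M$; combined with the convexity-radius estimate established above, this produces a uniform convexity radius $r_0=r_0(n,\Lambda,k,V,D)>0$. Next, the Bishop--Gromov type volume comparison for Berwald manifolds with $\mathbf K_M\ge-k$, together with $\Lambda_F\le\Lambda$ (which controls the ratio between $\mu$ and the volumes of small metric balls), gives a doubling estimate for balls of radius $\le D$, hence an upper bound $\mu(M)\le v_+(n,\Lambda,k,D)$ and a uniform lower bound for $\mu(B_p(\rho))$ when $\rho\le r_0$. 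Therefore a maximal $(r_0/10)$-separated set $\{p_1,\dots,p_m\}\subset M$ has $m\le N(n,\Lambda,k,V,D)$ elements, the balls $B_{p_i}(r_0/5)$ cover $M$ with multiplicity at most $N'(n,\Lambda,k)$, and---using the uniformly controlled normal charts afforded by $\mathfrak i_M\ge i_0$ and $|\mathbf K_M|\le k$---one fixes a subordinate partition of unity $\{\phi_i\}$ with $\|\phi_i\|_{C^1}$ bounded in terms of $n,\Lambda,k,V,D$ only. The same doubling estimate makes $\mathcal M$, metrized by the symmetrization of $d_F$, uniformly totally bounded, hence precompact in the Gromov--Hausdorff topology.

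\emph{Center-of-mass smoothing.} It then suffices to find $\varepsilon_0=\varepsilon_0(n,\Lambda,k,V,D)>0$ such that any two members of $\mathcal M$ at Gromov--Hausdorff distance $<\varepsilon_0$ are diffeomorphic: precompactness covers $\mathcal M$ by finitely many $\varepsilon_0$-balls, each meeting at most one diffeomorphism class, and the theorem follows. First one establishes the center-of-mass lemma: on a convex ball of radius $<r_0$ in a Berwald manifold, the functional $z\mapsto\sum_i w_i\,d_F(q_i,z)^2$ (with $w_i\ge0$, $\sum_i w_i=1$, and the $q_i$ in the ball) has a unique minimizer depending smoothly on the $w_i$, because its Hessian is uniformly positive definite there---a consequence of $|\mathbf K_M|\le k$, $\Lambda_F\le\Lambda$ and $r_0$ lying below the convexity radius. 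Now, given $(M,F),(M',F')\in\mathcal M$ and an $\varepsilon$-approximation $\psi\colon M\to M'$ with $\varepsilon<\varepsilon_0$, put $q_i:=\psi(p_i)\in M'$; for $\varepsilon$ small the $q_i$ with $\phi_i(x)\neq0$ lie in one convex ball of radius $<r_0$, so one may let $\Phi(x)$ be the center of mass of the $q_i$ with weights $\phi_i(x)$, which is well defined and smooth. A second-variation estimate (using $\Lambda_F\le\Lambda$, $|\mathbf K_M|\le k$ and the bounds on the $\phi_i$) shows that, read in the uniform normal charts, $\Phi$ is $C^1$-close to a local isometry, the closeness tending to $0$ as $\varepsilon\to0$; hence for $\varepsilon$ small $d\Phi$ is everywhere an isomorphism, so $\Phi$ is a local diffeomorphism and thus a covering map of compact manifolds. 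Constructing $\Phi'\colon M'\to M$ from a reverse approximation, one checks that $\Phi'\circ\Phi$ is $C^1$-close to $\id_M$, hence a diffeomorphism, whence $\Phi$ is a diffeomorphism.

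\emph{Main obstacle.} The hard part is the uniform $C^1$-control of $\Phi$. In the Finsler setting the Hessian of $d_{F'}^2$---and hence the center of mass and its dependence on the data---involves not only the flag curvature but also the fundamental tensor and its variation along geodesics, so the estimates are not the verbatim Riemannian ones. It is precisely here that the Berwald hypothesis ($\mathbf T_M=0$: an honest affine Chern connection, a smooth exponential map, and a second-variation operator comparable on scale $r_0$ to a Riemannian one) and the bound $\Lambda_F\le\Lambda$ (which lets one pass freely between forward and backward metric balls in spite of the asymmetry of $d_F$) enter decisively. Making all of this---the uniform normal charts, the partition of unity with controlled norms, and the center-of-mass estimates---hold \emph{uniformly over the entire class} $\mathcal M$ is the bulk of the work.
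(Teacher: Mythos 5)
Your intrinsic argument coincides with the paper's strategy: uniform injectivity and convexity radii from Theorem~\ref{injectiveestimeat} with $\tau=0$ and Section~4, a center-of-mass smoothing of a rough comparison map, and a precompactness/subsequence argument (the paper realizes the latter as total boundedness of the transition data $\mathfrak f^i_{\beta\alpha},\mathfrak g^i_{\beta\alpha}$ in Lemma~\ref{totallboun} and the former as the generalized Peters Lemma~\ref{Peter}). One genuine difference: the paper's center of mass (Theorem~\ref{centermass}) is the zero of $V(x)=-\int\exp_x^{-1}f(a)\,d\mathfrak m(a)$, whereas yours is the minimizer of $\sum_i w_i\,d_F(q_i,\cdot)^2$; for nonreversible $F$ these do \emph{not} coincide even on a Berwald manifold, since at $z$ the gradient of $d_F(q,\cdot)^2$ is $2\,d(q,z)\,\dot\gamma(l)$ while $\exp_z^{-1}(q)=\frac{d(z,q)}{F(-\dot\gamma(l))}\dot\gamma(l)$, with a $(q,z)$-dependent ratio, so the weighted critical points differ. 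Either choice can be made to work, but the uniform $C^1$ control you defer to your ``main obstacle'' paragraph is exactly the content of Section~6 and Appendices~A--B, and would need to be re-derived for your functional rather than for $V$.

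The parenthetical Szab\'o reduction, on the other hand, is a genuinely different and substantially shorter route that the paper does not take, and it does go through: a Berwald Chern connection is the Levi--Civita connection of the averaged metric $\tilde g$, $\Lambda_F\le\Lambda$ gives $\Lambda^{-1}\tilde g\le g_y\le\Lambda\tilde g$, hence $\diam_{\tilde g}(M)\le\sqrt\Lambda\,D$ and $\vol_{\tilde g}(M)\ge\Lambda^{-n/2}V$, and the diffeomorphism type is unchanged, so Cheeger's Riemannian finiteness theorem applies once $\sec_{\tilde g}$ is bounded. Your stated bound $|\sec_{\tilde g}|\le\Lambda k$ is, however, too optimistic: the flag-curvature hypothesis controls $R(y,V,y,V)$ only after lowering and normalizing with $g_y$, and passing to the $\tilde g$-lowered, $\tilde g$-normalized sectional curvature requires a polarization argument in the spirit of Lemma~\ref{CURVA}, costing extra powers of $\Lambda$ and a dimensional constant; the resulting bound $|\sec_{\tilde g}|\le C(n,\Lambda)\,k$ is still uniform and suffices. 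With that correction the Szab\'o route is a clean alternative proof; the paper's longer intrinsic argument has the compensating virtue of not leaning on the Berwald-specific metrizability theorem.
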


The arrangement of contents of this paper is as follows. In Sect.\,2, we brief some
necessary definitions and properties concerned with Finsler geometry. In Sect.\,3, a Finslerian version of Klingenberg's theorem is established and Theorem \ref{injectiveestimeat} is proved. In Sect.\,4, we estimate the convex radius and study the center of mass of a Berwald manifold. Theorem 1.2 is proved in Sect.\,5 by a generalized Peter's lemma, and the latter is proved in Sect.\,6. In App.\,A, we give some estimates for Jacobi fields on Finsler manifolds. In App.\,B, we study the parallel transformations on a Berwald manifold.

\section{Preliminaries}
In this section, we recall some definitions and properties about Finsler manifolds. See \cite{BCS,Sh1} for more details.

Let $(M,F)$ be a (connected) Finsler $m$-manifold with Finsler metric
$F:TM\rightarrow [0,\infty)$. Define $S_xM:=\{y\in T_xM:F(x,y)=1\}$
and $SM:={\cup}_{x\in M}S_xM$. Let $(x,y)=(x^i,y^i)$ be
local coordinates on $TM$. Define
\begin{align*}
&\ell^i:=\frac{y^i}{F},\ g_{ij}(x,y):=\frac12\frac{\partial^2
F^2(x,y)}{\partial y^i\partial
y^j},&A_{ijk}(x,y):=\frac{F}{4}\frac{\partial^3 F^2(x,y)}{\partial
y^i\partial y^j\partial y^k},\\
&\gamma^i_{jk}:=\frac12 g^{il}\left(\frac{\partial g_{jl}}{\partial
x^k}+\frac{\partial g_{kl}}{\partial x^j}-\frac{\partial
g_{jk}}{\partial x^l}\right),
&N^i_j:=\left(\gamma^i_{jk}\ell^j-A^i_{jk}\gamma^k_{rs}\ell^r
\ell^s\right)\cdot F.
\end{align*}
The Chern connection $\nabla$ is defined on the pulled-back bundle
$\pi^*TM$ and its forms are characterized by the following structure
equations:

(1) Torsion freeness: $dx^j\wedge\omega^i_j=0$;

(2) Almost $g$-compatibility: $d
g_{ij}-g_{kj}\omega^k_i-g_{ik}\omega^k_j=2\frac{A_{ijk}}{F}(dy^k+N^k_l
dx^l)$.

From above, it's easy to obtain $\omega^i_j=\Gamma^i_{jk}dx^k$, and
$\Gamma^i_{jk}=\Gamma^i_{kj}$. It should be remarked that $\Gamma^i_{kj}=\Gamma^i_{kj}(x,y)$ is a local smooth function on $SM$. In particular, $F$ is called a Berwald metric if $\frac{\partial\Gamma^i_{kj}}{\partial y^s}=0$.

The curvature form of the Chern connection is defined as
\[
\Omega^i_j:=d\omega^i_j-\omega^k_j\wedge\omega^i_k=:\frac{1}{2}R^i_{j\,kl}d
x^k\wedge d x^l+P^i_{j\,kl}d x^k\wedge\frac{d y^l+N^l_s dx^s}{F}.
\]
Given a non-zero vector $V\in T_xM$, the flag curvature $K(y,V)$ on
$(x,y)\in TM\backslash 0$ is defined as
\[
\mathbf{K}(y,V):=\frac{V^i y^jR_{jikl}y^l
V^k}{g_y(y,y)g_y(V,V)-[g_y(y,V)]^2},
\]
where $R_{jikl}:=g_{is}R^s_{j\,kl}$.

The reversibility $\lambda_F$ and the uniformity constant $\Lambda_F$ of
$(M,F)$ are defined as
\[
\lambda_F:=\underset{X\in TM\backslash0}{\sup}\frac{F(-X)}{F(X)},\ \Lambda_F:=\underset{X,Y,Z\in SM}{\sup}\frac{g_X(Y,Y)}{g_Z(Y,Y)}.
\]
Clearly, $\lambda_F\geq 1$ with equality if and only if $F$
is reversible, and $\Lambda_F\geq 1$ with equality if and only if $F$ is Riemannian. In particular, $\lambda_F\leq \sqrt{\Lambda_F}$.

The Legendre transformation $\mathcal {L} : TM
\rightarrow T^*M$ is defined by
\[\mathcal {L}(Y)=
\left \{
\begin{array}{lll}
&0, &Y=0,\\
&g_Y(Y,\cdot),&Y\neq 0.
\end{array}
\right.
\]
For each $x\in M$, the Legendre transformation is a smooth
diffeomorphism from $T_xM\backslash\{0\}$ to
$T^*_xM\backslash\{0\}$.

The average Riemannian metric $\tilde{g}$ induced by $F$ is defined by
\[
\tilde{g}(X,Y)=\frac{1}{\vol(x)}\int_{S_xM}g_y(X,Y)d\nu_x(y),\ \forall\,X,Y\in T_xM,
\]
where $\vol(x):=\int_{S_xM}d\nu_x(y)$.

\section{The injectivity radius of a compact Finsler manifold}
For a general Finsler metric, the Legendre transformation is non-linear but only positive homogeneous. First, we show the following result, which is claimed in \cite{R} without proof.
\begin{lemma}\label{threevectors}
Let $(M,F)$ be a Finsler $n$-manifold. Given three distinct vectors $X,Y,Z\in S_pM$, we have
\[
\dim\{W\in T_pM: \ \mathcal {L}_X(W)=\mathcal {L}_Y(W)=\mathcal {L}_Z(W)\}\leq n-2.
\]
\end{lemma}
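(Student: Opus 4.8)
The plan is to argue by contradiction: suppose the linear subspace $U:=\{W\in T_pM:\ \mathcal{L}_X(W)=\mathcal{L}_Y(W)=\mathcal{L}_Z(W)\}$ has dimension at least $n-1$. The key observation is that the three covectors $\mathcal{L}_X,\mathcal{L}_Y,\mathcal{L}_Z\in T_p^*M$ all agree on $U$, hence their pairwise differences $\mathcal{L}_X-\mathcal{L}_Y$ and $\mathcal{L}_X-\mathcal{L}_Z$ annihilate $U$. Since $\dim U\geq n-1$, the annihilator of $U$ is at most one-dimensional, so $\mathcal{L}_X-\mathcal{L}_Y$ and $\mathcal{L}_X-\mathcal{L}_Z$ are linearly dependent; in particular there is a single nonzero covector $\xi$ (spanning the annihilator of $U$, assuming the $\mathcal{L}$'s are not all equal — and they cannot all be equal since $\mathcal{L}$ is injective on $T_pM\setminus\{0\}$ and $X,Y,Z$ are distinct) such that each of $\mathcal{L}_X,\mathcal{L}_Y,\mathcal{L}_Z$ lies on the affine line $\mathcal{L}_X+\mathbb{R}\xi$ in $T_p^*M$.

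Next I would transfer this back to $T_pM$ via the inverse Legendre transformation $\mathcal{L}^{-1}$, which maps $T_p^*M\setminus\{0\}$ diffeomorphically onto $T_pM\setminus\{0\}$ and is the gradient of the dual norm $F^*$. Writing $\varphi(t):=\mathcal{L}^{-1}(\mathcal{L}_X+t\xi)$, we get a smooth curve in $T_pM$ whose image contains the three distinct points $X,Y,Z$, all of which lie on the unit sphere $S_pM$. The essential point is now a convexity argument: the unit sphere $S_pM$ is the boundary of the strictly convex body $\{F\le 1\}$ in $T_pM$, so it cannot contain three distinct points of the form $\varphi(t)$ unless the map $t\mapsto\varphi(t)$ itself has a very special structure. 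Concretely, $\mathcal{L}_X+t\xi$ runs along a straight line in $T_p^*M$, and $F^*$ restricted to that line is a smooth strictly convex function of $t$ (strict convexity of $F^*$ follows from strong convexity of $F$, i.e. positive definiteness of $g_{ij}$, via the Legendre transformation); but $\mathcal{L}^{-1}=\nabla F^*$ evaluated along the line is $\tfrac12\nabla(F^{*2})$ up to the usual homogeneity factor, and I claim the unit speed / normalization condition $F(\varphi(t))=1$ forces at most two solutions.

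More precisely, here is the cleanest route: for $W\in U$ we have $\langle\mathcal{L}_X,W\rangle=\langle\mathcal{L}_Y,W\rangle$, i.e. $g_X(X,W)=g_Y(Y,W)$, and similarly with $Z$. Consider the function $h(y):=g_y(y,W)$ on $S_pM$ for fixed $W\in U$; then $h(X)=h(Y)=h(Z)$. Since $g_y(y,W)=\tfrac12\,\tfrac{d}{ds}\big|_{s=0}F^2(y+sW)$ and $F^2$ is strongly convex, one shows $h$ is (the restriction to the sphere of) a function whose level sets meet $S_pM$ in at most a "codimension-one" set in a suitable sense; iterating over a basis $W_1,\dots,W_{n-1}$ of $U$ and using strict convexity of the indicatrix, the locus of $y\in S_pM$ with $g_y(y,W_j)=c_j$ for all $j$ is at most $0$-dimensional, hence finite — but more care is needed, and the truly robust formulation is the annihilator/strict-convexity argument above. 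I expect the main obstacle to be making the strict-convexity step rigorous: one must rule out that the affine segment $\mathcal{L}_X+[0,1]\xi$ in $T_p^*M$ maps under $\mathcal{L}^{-1}$ to three points on $S_pM$, which amounts to showing that the restriction of $F^*$ to an affine line, composed with the normalization $F(\mathcal{L}^{-1}(\cdot))=1$, is "generic enough" — this is where strong convexity of $F$ (equivalently of $F^*$) is used essentially, and handling the non-reversible, merely positively homogeneous case is the delicate part.
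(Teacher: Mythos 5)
Your opening move matches the paper exactly: identify the set in question with the kernel of the two functionals $\mathcal{L}_X-\mathcal{L}_Y$ and $\mathcal{L}_X-\mathcal{L}_Z$, note that dimension $n-1$ forces these two covectors to span the same line, and conclude that $\mathcal{L}_X,\mathcal{L}_Y,\mathcal{L}_Z$ are collinear in $T_p^*M$. That part is correct and is precisely the paper's setup.

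Where you lose the thread is at the closing step, and you say so yourself (``more care is needed'', ``the delicate part''). Your proposed detour --- pull the affine line $\mathcal{L}_X+\mathbb{R}\xi$ back through $\mathcal{L}^{-1}$ into $T_pM$, get a curve $\varphi(t)$ through $X,Y,Z$ on the indicatrix, and argue by strict convexity of $\{F\le 1\}$ --- is both unnecessary and unfinished: the image of an affine line under the nonlinear map $\mathcal{L}^{-1}$ is a curve in $T_pM$, not a line, so strict convexity of the indicatrix alone does not bound the number of intersections without further work, and you never supply that work. Your backup idea (look at $h(y)=g_y(y,W)$ on $S_pM$) fares no better as written. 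The clean way to close, which is what the paper does, is to stay in $T_p^*M$: write the collinearity as $A-B=\alpha(A-C)$ with $\alpha\notin\{0,1\}$, rearrange to $(\alpha-1)A=\alpha C-B$ (when $\alpha>1$) or $(|\alpha|+1)A=|\alpha|C+B$ (when $\alpha<0$), take the dual Minkowski norm $F^*$ of both sides, and observe that $F^*(A)=F^*(B)=F^*(C)=1$ forces equality in the triangle inequality for $F^*$. The equality case (\cite[Theorem 1.2.2]{BCS}, which holds for positively homogeneous, strongly convex Minkowski norms, hence handles nonreversibility automatically) then makes the two summands positively proportional, yielding $B=C$, a contradiction. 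This is the ``strict convexity'' fact you were reaching for, but packaged as the strict triangle inequality for $F^*$ rather than as a statement about $\mathcal{L}^{-1}$ or about level sets on $S_pM$; packaged that way, the non-reversible, merely positively homogeneous case is no obstacle at all. So: right structure, right intuition about where convexity must enter, but the proof as proposed has a genuine gap at the decisive step.
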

\begin{proof}Set $A:=\mathcal {L}_X$, $B:=\mathcal {L}_Y$ and $C:=\mathcal {L}_Z$. After choosing a basis for $T_pM$, we can view $A,B,C$ as three vectors in $(\mathbb{R}^n,\langle\cdot,\cdot\rangle)$, i.e., $\mathcal {L}_X(W)=\langle A,W\rangle$,
where $\langle\cdot,\cdot\rangle$ is a standard Euclidean inner product.
 Consider the solution space $\mathcal {S}$ of following system:
\[
\left \{
\begin{array}{lll}
\langle(A-B),W\rangle=0,\\
\langle(A-C),W\rangle=0.
\end{array}
\right.
\]
Clearly,  $\mathcal {S}=(A-B)^\perp\cap  (A-C)^\perp$, where $(\cdot)^\perp$ denote the orthogonal complement of $(\cdot)$ in $(\mathbb{R}^n,\langle\cdot,\cdot\rangle)$. Since $\mathcal {L}$ is injective, $\dim(\mathcal {S})\leq n-1$.

 Suppose that $\dim(\mathcal {S})= n-1$. Thus, $\mathcal {S}=(A-B)^\perp=(A-C)^\perp$ and hence, there exists a nonzero constant $\alpha$ such that $A-B=\alpha(A-C)$. Clearly, $\alpha\neq1$.

\noindent\textbf{Case 1.} Suppose $\alpha>0$. Since $(\alpha-1)A=\alpha C-B$, we can assume that $\alpha >1$. Thus,
\[
\alpha-1=F((\alpha-1)A)=F(\alpha C-B)\geq \alpha F(C)-F(B)=\alpha-1.
\]
Then the triangle inequality \cite[Theorem 1.2.2]{BCS} yields $\alpha C-B=\beta B$, where $\beta\geq 0$. Hence, $\alpha=1+\beta$ and $C=B$, which is a contradiction.

\noindent\textbf{Case 2.} Suppose $\alpha<0$. Thus, $(|\alpha|+1)A=|\alpha| C+B$ and
\[
|\alpha|+1=F((|\alpha|+1)A)=F(|\alpha| C+B)\leq |\alpha| F(C)+F(B)=|\alpha|+1.
\]
The same argument as above yields $C=B$, which is a contradiction as well.

Therefore, $\dim(\mathcal {S})\leq n-2$. We are done by $\mathcal {S}=\{W\in T_pM: \ \mathcal {L}_X(W)=\mathcal {L}_Y(W)=\mathcal {L}_Z(W)\}$.
\end{proof}

Let $(M,F)$ be a compact Finsler manifold. There exist two point $p$ and $q$ such that
\[
\mathfrak{i}_M=d(p,q)=d(p,\text{Cut}_p).
\]
Let $\gamma_y(t)$, $0\leq t\leq d(p,q)$ be a normal minimal geodesic from $p$ to $q$. Then $q$ is the cut point of $p$ along $\gamma_y$.
If $q$ is not the first conjugate point of $p$ along $\gamma_y$, \cite[Proposition 8.2.1]{BCS} implies that there exists another distinct normal minimal  geodesic $\gamma_w(t)$,  $0\leq t\leq d(p,q)$ from $p$ to $q$. If $F$ is reversible, Shen \cite[Lemma 12.2.5]{Sh1} shows that $\dot{\gamma}_y(d(p,q))=-\dot{\gamma}_w(d(p,q))$. If $F$ is nonreversible,
Rademacher in \cite{R} obtains the following result.
\begin{lemma}[\cite{R}]\label{hypersur}
There exists a local hypersurface $H$ with $q\in H$ such that for each smooth curve $\sigma:(-\epsilon,\epsilon)\rightarrow H$ with $\sigma(0)=q$, there are two geodesic
variations $c_{y,s}$, $c_{w,s}:[0,d(p,q)]\rightarrow M$ with $c_{y,s}(d(p,q))=c_{w,s}(d(p,q))=\sigma(s)$, $L(c_{y,s})=L(c_{w,s})$, and $c_{y,0}(t)=\gamma_y(t)$ and $c_{w,0}(t)=\gamma_w(t)$.
\end{lemma}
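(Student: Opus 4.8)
The plan is to exhibit $H$ as the zero locus, near $q$, of the difference of two ``branch length'' functions. Write $L_0:=d(p,q)$ and let $y:=\dot\gamma_y(0)$, $w:=\dot\gamma_w(0)$ be the unit initial velocities, so $\exp_p(L_0y)=\exp_p(L_0w)=q$. Since $q$ is not conjugate to $p$ along $\gamma_y$, the differential $d(\exp_p)_{L_0y}$ is nonsingular, hence $\exp_p$ restricts to a diffeomorphism from a neighbourhood of $L_0y$ in $T_pM$ onto a neighbourhood of $q$; the same holds at $L_0w$, since in the non-conjugate branch of the cut-locus dichotomy (\cite[Proposition 8.2.1]{BCS}) that is presently in force, $q$ is conjugate to $p$ along no minimizing geodesic. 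Shrinking, fix one neighbourhood $W\ni q$ and smooth inverse branches $v_y,v_w:W\to T_pM\setminus\{0\}$ with $\exp_p\circ v_y=\exp_p\circ v_w=\id_W$, $v_y(q)=L_0y$, $v_w(q)=L_0w$, and set $r_y(x):=F(p,v_y(x))$, $r_w(x):=F(p,v_w(x))$ for $x\in W$. These are smooth, $r_y(q)=r_w(q)=L_0$, and $r_y(x)$ (resp.\ $r_w(x)$) equals the length of the geodesic $t\mapsto\exp_p\!\big(\tfrac{t}{L_0}v_y(x)\big)$, $t\in[0,L_0]$ (resp.\ with $v_w$), joining $p$ to $x$.

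Next I would compute $d(r_y-r_w)_q$. Given $V\in T_qM$, choose a curve $\alpha$ in $W$ with $\alpha(0)=q$, $\dot\alpha(0)=V$; then the curves $c_s(t):=\exp_p\!\big(\tfrac{t}{L_0}v_y(\alpha(s))\big)$ form a variation of $\gamma_y$ by geodesics with fixed initial point $p$ and end variation field $V$ at $t=L_0$, so the first variation of arc length (the interior term vanishes because $\gamma_y$ is a geodesic and the $t=0$ term because $p$ is fixed) gives $dr_y|_q(V)=g_{\dot\gamma_y(L_0)}(\dot\gamma_y(L_0),V)=\mathcal L_{\dot\gamma_y(L_0)}(V)$, and likewise $dr_w|_q(V)=\mathcal L_{\dot\gamma_w(L_0)}(V)$. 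Hence $d(r_y-r_w)_q=\mathcal L_{\dot\gamma_y(L_0)}-\mathcal L_{\dot\gamma_w(L_0)}$. Were this covector zero, injectivity of the Legendre transformation on $T_qM\setminus\{0\}$ would force $\dot\gamma_y(L_0)=\dot\gamma_w(L_0)$, and then $\gamma_y$ and $\gamma_w$, being geodesics with the same position and velocity at $t=L_0$, would coincide --- contradicting their distinctness. Therefore $d(r_y-r_w)_q\neq 0$.

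By the implicit function theorem, after shrinking $W$ the set $H:=\{x\in W:\ r_y(x)=r_w(x)\}$ is an embedded hypersurface through $q$. For any smooth $\sigma:(-\epsilon,\epsilon)\to H$ with $\sigma(0)=q$ and $\epsilon$ small enough that $\sigma((-\epsilon,\epsilon))\subset W$, define
\[
c_{y,s}(t):=\exp_p\!\Big(\tfrac{t}{L_0}v_y(\sigma(s))\Big),\qquad c_{w,s}(t):=\exp_p\!\Big(\tfrac{t}{L_0}v_w(\sigma(s))\Big),\qquad t\in[0,L_0];
\]
these are geodesic variations with $c_{y,s}(0)=c_{w,s}(0)=p$, $c_{y,s}(L_0)=\sigma(s)=c_{w,s}(L_0)$, $c_{y,0}=\gamma_y$, $c_{w,0}=\gamma_w$, and $L(c_{y,s})=F(p,v_y(\sigma(s)))=r_y(\sigma(s))=r_w(\sigma(s))=F(p,v_w(\sigma(s)))=L(c_{w,s})$ because $\sigma(s)\in H$, which is exactly the claim. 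The argument is essentially routine once the setup is in place; the two points needing care are the smoothness of $r_y,r_w$ (which is why one first disposes of the conjugate-point alternative) and the step $d(r_y-r_w)_q\neq 0$, where injectivity of the \emph{nonlinear} Legendre transformation plays the role that in the reversible case is taken by the sharper identity $\dot\gamma_y(L_0)=-\dot\gamma_w(L_0)$ of \cite[Lemma 12.2.5]{Sh1}.
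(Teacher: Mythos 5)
The paper does not prove this lemma itself --- it cites it from Rademacher \cite{R} --- so there is no in-paper argument to compare against. Your construction is the standard (essentially Rademacher's) one and is correct in all its main steps: pick local inverse branches $v_y,v_w$ of $\exp_p$ near $L_0y$ and $L_0w$, define the branch-length functions $r_y=F\circ v_y$, $r_w=F\circ v_w$, compute $d(r_y-r_w)_q=\mathcal L_{\dot\gamma_y(L_0)}-\mathcal L_{\dot\gamma_w(L_0)}$ via the first variation of arclength, observe this is nonzero because the fibrewise Legendre transformation is injective and $\dot\gamma_y(L_0)\neq\dot\gamma_w(L_0)$, and take $H=\{r_y=r_w\}$ by the implicit function theorem. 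The length identity $L(c_{y,s})=r_y(\sigma(s))=r_w(\sigma(s))=L(c_{w,s})$ then follows immediately from $\sigma(s)\in H$.

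One step needs a better justification. To get the smooth branch $v_w$ you need $q$ non-conjugate to $p$ along $\gamma_w$, and you attribute this to being ``in the non-conjugate branch of the cut-locus dichotomy.'' But the two alternatives in \cite[Proposition 8.2.1]{BCS} are not mutually exclusive: knowing that $q$ is \emph{not} the first conjugate point along $\gamma_y$ and that a second minimizer $\gamma_w$ exists does not by itself rule out $q$ being conjugate along $\gamma_w$. What actually supplies non-conjugacy along \emph{every} minimizing geodesic in the paper's application (Step 2 of Theorem \ref{closedgeodesci}) is the flag-curvature bound: $\mathbf K_M\le k$ together with $d(p,q)<\pi/\sqrt k$ (established there from $\tilde{\mathfrak i}_M<\tfrac{(1+\lambda_F^{-1})\pi}{2\sqrt k}$) forbids any conjugate point at parameter $d(p,q)$. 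If you replace the appeal to the dichotomy with this Morse--Schoenberg/Rauch-type argument, the rest of your proof goes through unchanged.
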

\begin{remark}
The first variation formula \cite[p.123]{BCS} together with Lemma \ref{hypersur} yields that $\mathcal {L}_{\dot{\gamma}_y(d(p,q))}=-\mathcal {L}_{\dot{\gamma}_w(d(p,q))}$ is a unit normal vector of $T_qN$. However, if $F$ is nonreversible, one cannot deduce $\dot{\gamma}_y(d(p,q))=-\dot{\gamma}_w(d(p,q))$, since the Legendre transformation is non-linear.
\end{remark}

In the following, we use a method of Rademacher \cite{R,R2} to show the following theorem, which is a Finslerian version of Klingenberg's theorem.
\begin{theorem}\label{closedgeodesci}
Let $(M,F)$ be a compact Finsler manifold with $\mathbf{K}_M\leq k$. Then
\[
{\mathfrak{i}}_M\geq\min\left\{\frac{\pi}{\lambda_F\sqrt{k}},\,\frac{1}{1+\lambda_F}\text{the shortest simple closed geodesic in }M \right\}.
\]
In particular, the equality holds if $F$ is reversible (i.e., $\lambda_F=1$).
\end{theorem}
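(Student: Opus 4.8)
The plan is to adapt Klingenberg's classical dichotomy argument, carrying along the reversibility constant $\lambda_F$ at the two places where the Riemannian proof uses that the metric is quadratic: the conjugate radius estimate and the closing-up of two minimizing geodesics into a closed geodesic.

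First I would reduce to the nontrivial case: we may assume $\mathfrak{i}_M<\pi/(\lambda_F\sqrt k)$, which forces $\mathfrak{i}_M<\pi/\sqrt k$ since $\lambda_F\ge1$. By compactness choose $p,q$ with $\mathfrak{i}_M=d(p,q)=d(p,\mathrm{Cut}_p)$ and a unit-speed minimal geodesic $\gamma_y:[0,\ell]\to M$, $\ell:=\mathfrak{i}_M$, from $p$ to $q$, so that $q$ is the cut point of $p$ along $\gamma_y$. The Jacobi field estimates of Appendix A, together with $\mathbf{K}_M\le k$, put the first conjugate point along any unit-speed geodesic at parameter $\ge\pi/\sqrt k>\ell$; hence $q$ is not conjugate to $p$ along $\gamma_y$, and \cite[Proposition 8.2.1]{BCS} produces a second, distinct unit-speed minimal geodesic $\gamma_w$ from $p$ to $q$. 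Lemma \ref{hypersur} and the Remark following it then give a local hypersurface $H\ni q$ for which $\mathcal{L}_{\dot\gamma_y(\ell)}=-\mathcal{L}_{\dot\gamma_w(\ell)}$ is a unit conormal of $H$ at $q$.

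The candidate curve is the loop $c:=\gamma_y\ast\bar\gamma_w$ based at $p$, where $\bar\gamma_w(t):=\gamma_w(\ell-t)$; its length satisfies
\[
L_F(c)=\ell+\int_0^\ell F\bigl(\gamma_w(\ell-t),\,-\dot\gamma_w(\ell-t)\bigr)\,dt\le(1+\lambda_F)\,\mathfrak{i}_M,
\]
by $F(-v)\le\lambda_F\,F(v)$ and the unit speed of $\gamma_w$. It then remains to extract from $c$ a simple closed geodesic of length at most $(1+\lambda_F)\,\mathfrak{i}_M$, which yields $\mathfrak{i}_M\ge\frac1{1+\lambda_F}\cdot(\text{shortest simple closed geodesic})$ and finishes the proof.

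This extraction is the crux and the main obstacle. In the Riemannian case $\mathcal{L}$ is linear, so $\mathcal{L}_{\dot\gamma_y(\ell)}=-\mathcal{L}_{\dot\gamma_w(\ell)}$ upgrades to $\dot\gamma_y(\ell)=-\dot\gamma_w(\ell)$, i.e.\ $c$ is smooth at $q$; running the same argument with $p$ and $q$ interchanged on the reverse metric $\bar F(x,y):=F(x,-y)$ — legitimate because $\mathfrak{i}_M<\pi/\sqrt k$ forces $\mathfrak{i}(M,\bar F)\ge\mathfrak{i}_M$, so $p$ is a nearest point of the backward cut locus of $q$ and is not conjugate to $q$ along either reversed geodesic — gives $\dot\gamma_y(0)=-\dot\gamma_w(0)$, whence $c$ is a smooth closed geodesic of length exactly $2\mathfrak{i}_M$; this also accounts for the equality asserted for reversible $F$. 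When $\lambda_F>1$ the Legendre transformation is only positively homogeneous, the upgrade fails, and $c$ genuinely has corners: the dual use of Lemma \ref{hypersur} still delivers the Legendre identities at both endpoints, but $c$ alternates between an $F$-geodesic and a $\bar F$-geodesic and is not a geodesic for $F$. I would then conclude by a global variational argument in the spirit of Rademacher \cite{R,R2}, replacing the corner-straightening by a Birkhoff-type shortening (replace each half of $c$ by a minimizing $F$-geodesic and iterate) that produces a closed geodesic of $F$-length $\le L_F(c)$, supplemented, when the shortening degenerates, by Klingenberg's limiting argument along the geodesic variations $c_{y,s},c_{w,s}$ of Lemma \ref{hypersur} (with Lemma \ref{threevectors} excluding a competing third minimizer); passing to an innermost sub-loop then makes the geodesic simple. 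Controlling this procedure so that it neither gains length nor collapses to a point is exactly where the non-reversibility bites; the rest is a faithful, if bookkeeping-heavy, transcription of the Riemannian argument with $\lambda_F$ inserted.
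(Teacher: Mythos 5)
Your proposal correctly isolates the obstacle (non-reversibility prevents $\gamma_w$ from closing up when run backwards) but then commits to a route that leaves the crucial step unfinished. Concatenating $\gamma_y$ with the \emph{reverse} of $\gamma_w$ produces a loop whose second half is an $\bar F$-geodesic, not an $F$-geodesic, and you propose to repair this with a Birkhoff-type curve shortening; you then explicitly concede that controlling that process so that it neither gains length nor collapses to a point ``is exactly where the non-reversibility bites.'' That concession is the gap: the theorem's conclusion hinges precisely on producing a \emph{closed geodesic} of controlled length, and the shortening argument is not carried out. Moreover, your length bookkeeping is based on $p,q$ minimizing $d(p,\mathrm{Cut}_p)$, which does not directly mesh with the asymmetry of $F$; the paper instead sets up the whole argument around the \emph{symmetrized} distance $\tilde d(p,q)=\tfrac12(d(p,q)+d(q,p))$ and the associated quantity $\tilde{\mathfrak i}_M$, reducing the theorem (via the elementary inequalities $(3.1)$--$(3.2)$) to producing a simple closed geodesic of length exactly $2\tilde{\mathfrak i}_M$ when $\tilde{\mathfrak i}_M<\tfrac{(1+\lambda_F^{-1})\pi}{2\sqrt k}$.

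The paper's actual mechanism is genuinely different and avoids reversing any geodesic. Having chosen $p$ and $q\in\mathrm{Cut}_p$ realizing $\tilde{\mathfrak i}_M$ and obtained the two distinct minimizers $c_1,c_2$ from $p$ to $q$ (non-conjugacy follows exactly as you argue), it introduces a \emph{third, forward} minimal geodesic $c_3$ from $q$ to $p$. Then Lemma~\ref{threevectors} --- the dimension-count statement that three distinct unit covectors $\mathcal L_{c_1'}$, $\mathcal L_{c_2'}$, $\mathcal L_{c_3'}$ at $q$ cannot all agree on an $(n{-}1)$-dimensional subspace --- together with the hypersurface $H$ of Lemma~\ref{hypersur} and the first variation formula, shows that if \emph{neither} $c_1*c_3$ nor $c_2*c_3$ were smooth at $q$, one could vary inside $H$ to strictly decrease $\tilde d(p,\cdot)$ on the cut locus, contradicting the choice of $q$. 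Since $c_1*c_3$ consists of honest $F$-geodesics, once it is smooth at $q$ it is already a geodesic loop of length $2\tilde{\mathfrak i}_M$; smoothness at the base point $p$ is then forced by the continuity of the cut value (push $p$ to $p_\varepsilon=c(\varepsilon)$ and use the minimality of $\tilde{\mathfrak i}_M$ to show the distance from $q_\varepsilon$ back to $p_\varepsilon$ is realized through $p$). This is the content you would need to supply in place of the Birkhoff step; as written, your proof does not establish the theorem.
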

\begin{proof}
\noindent\textbf{Step 1.} As in \cite{R3,R},
 set
\[
\tilde{i}_p:=\inf\{\tilde{d}(p,q):\,q\in \text{Cut}_p\},\ \tilde{\mathfrak{i}}_M:=\inf_{p\in M}\tilde{i}_p,
\]
where $\tilde{d}(p,q):=\frac12(d(p,q)+d(q,p))$. Clearly,
\[
\frac{1+\lambda_F^{-1}}{2}\mathfrak{i}_M\leq \tilde{\mathfrak{i}}_M\leq \frac{(1+\lambda_F)}2\mathfrak{i}_M.\tag{3.1}\label{new**0}
\]
Given a closed geodesic $c(t)$, $0\leq t\leq 1$, let $c(t_0)$ denote the cut point of $c(0)$ along $c$. Then we have
\[
L(c)=d(c(0),c(t_0))+L(c|_{[t_0,1]})\geq d(c(0),c(t_0))+d(c(t_0),c(0))\geq 2\,\tilde{\mathfrak{i}}_M.\tag{3.2}\label{new3.2}
\]
Hence,
(\ref{new**0}) together with (\ref{new3.2}) implies that in order to prove the theorem,  we just need to show that there exists a simple closed geodesic $c$ with $L(c)=2\,\tilde{\mathfrak{i}}_M$ in the case of $\tilde{\mathfrak{i}}_M<\frac{(1+\lambda^{-1}_F)\pi}{2\sqrt{k}}$.

\noindent\textbf{Step 2.} Suppose $\tilde{\mathfrak{i}}_M<\frac{(1+\lambda^{-1}_F)\pi}{2\sqrt{k}}$. We now construct a simple geodesic loop $c$ (based at $c(0)$) with $L(c)=2\,\tilde{\mathfrak{i}}_M$.
Since $M$ is compact, there is a point $p\in M$ with $\tilde{i}_p=\tilde{\mathfrak{i}}_M$.
Let $q$ be the point in $\text{Cut}_p$ with $\tilde{d}(p,q)=\tilde{i}_p$.
Since
\[
d(p,q)\leq \frac{2}{(1+\lambda_F^{-1})}\tilde{d}(p,q)=\frac{2}{(1+\lambda_F^{-1})}\tilde{i}_p<\frac{\pi}{\sqrt{k}},
\]
$q$ is not the conjugate point of $p$. Thus, there exist two distinct normal minimal geodesics $c_1(t)$ and $c_2(t)$, $t\in [0,d(p,q)]$ from $p$ to $q$.
Let $c_3(t)$, $t\in[0,d(q,p)]$ be a normal minimal geodesic from $q$ to $p$.

The proof of \cite[Lemma 9.4]{R} implies that
$c_1*c_3$ or $c_2*c_3$ is smooth at $q$. For completeness, we give a sketch of this proof. Suppose that neither $c_1*c_3$ nor $c_2*c_3$ is smooth at $q$.
That is, $c'_1(d(p,q))\neq c'_3(0)$ and $c'_2(d(p,q))\neq c'_3(0)$. It follows from Lemma \ref{threevectors} that
\[
\dim\{W\in T_qM:\,\mathcal {L}_{c'_1(d(p,q))}(W)=\mathcal {L}_{c'_2(d(p,q))}(W)=\mathcal {L}_{c'_3(0)}(W)    \}\leq n-2.
\]
Denote by $H$ the hypersurface though $q$ as in Lemma \ref{hypersur}.
Since $\dim H=n-1$, Lemma \ref{threevectors} implies that there is a $v\in T_qN$ such that for $i=1$ or $i=2$,
\[
\mathcal {L}_{c'_i(d(p,q))}(v)\neq\mathcal {L}_{c'_3(0)}(v).
\]
Without loss of generality, we assume that $i=1$. Set
\[
c(t)=c_1*c_3(t),\ t\in [0,d(p,q)+d(q,p)].
\]
Let $\iota(s)$, $s\in (-\epsilon,\epsilon)$ be a smooth curve in $H$ with $\iota(0)=q$ and $\iota'(0)=v$. Let $c_{i,s}(t)$, $s\in (-\epsilon,\epsilon)$, $t\in [0,d(p,q)]$ be the geodesic variations defined as in Lemma \ref{hypersur}. Let $c_{3,s}(t)$, $s\in (-\epsilon,\epsilon)$, $t\in [0,d(q,p)]$ be the minimal geodesic variation from $\iota(s)$ to $p$. Consider the variation $c_s(t):=c_{1,s}*c_{3,s}(t)$, $s\in (-\epsilon,\epsilon)$, $t\in [0,d(p,q)+d(q,p)]$. The first variation formula then yields
\[
\left.\frac{d}{ds}\right|_{s=0}L(c_s)=g_{c'_3(0)}(c'_3(0),v)-g_{c'_1(0)}(c'_1(0),v)\neq 0.
\]
Without loss of generality, we can assume that $\left.\frac{d}{ds}\right|_{s=0}L(c_s)<0$ (otherwise, consider $\iota(-s)$). Then there exists $s_0>0$ such that $L(c_{s_0})<L(c_0)=L(c)$.
None of $c_{i,s_0}(t)$, $i=1,2$ is minimal on $[0,d(p,q)+\varepsilon]$ for any $\varepsilon>0$, since $c_{i,s_0}(d(p,q))=\iota(s_0)$. Then there is $t_0\in (0,d(p,q)]$ such that $q_1=c_{1,s_0}(t_0)$ is the cut point of $p$ along $c_{1,s_0}(t)$. Clearly, $d(p,q_1)=L(c_{1,s}|_{[0,t_0]})$.
Thus,
\begin{align*}
2\tilde{d}(p,q_1)=&d(p,q_1)+d(q_1,p)\leq d(p,q_1)+d(q_1, \iota(s_0))+d(\iota(s_0),p)\\
=&L(c_{1,s_0}|_{[0,t_0]})+d(q_1, \iota(s_0))+d(\iota(s_0),p)\\
\leq &L(c_{1,s_0}|_{[0,t_0]})+L(c_{1,s_0}|_{[t_0,d(p,q)]})+L(c_{3,s_0})\\
=&L(c_{1,s_0})+L(c_{3,s_0})=L(c_{s_0})<L(c)=2\tilde{d}(p,q),
\end{align*}
which contradicts the definition of $q$. Hence, $c_1*c_3$ is smooth at $q$ and therefore, $c$ is a geodesic loop based at $p$ though $q$ with $L(c)= d(p,q)+d(q,p)=2\tilde{d}(p,q)=2\,\tilde{\mathfrak{i}}_M$.

\noindent \textbf{Step 3.} We now show that $c=c_1*c_3$ is a closed geodesic. From above, one see that $c(t)$, $t\in {[d(p,q),2\,\tilde{\mathfrak{i}}_M]}$ is a minimal geodesic from $q$ to $p$.

The continuity of the cut value \cite[Proposition 8.4.1]{BCS} implies that
for a small positive number $\varepsilon(<d(p,q))$, there exists $t_\varepsilon\in (d(p,q),2\,\tilde{\mathfrak{i}}_M)$ such that $q_\varepsilon=c(t_\varepsilon)$ is a cut point of $p_\varepsilon=c(\varepsilon)$ along $c(t)$. That is, $c|_{[\varepsilon,t_\varepsilon]}$ is minimal. Hence,
\begin{align*}
2\,\tilde{\mathfrak{i}}_M&\leq 2\tilde{d}(p_\varepsilon,q_\varepsilon)=d(p_\varepsilon,q_\varepsilon)+d(q_\varepsilon,p_\varepsilon)\leq d(p_\varepsilon,q_\varepsilon)+d(q_\varepsilon,p)+d(p,p_\varepsilon)\tag{3.3}\label{new**2}\\
&=d(p_\varepsilon,q)+d(q,q_\varepsilon)+d(q_\varepsilon,p)+d(p,p_\varepsilon)\\
&=d(p,p_\varepsilon)+d(p_\varepsilon,q)+d(q,q_\varepsilon)+d(q_\varepsilon,p)\\
&=d(p,q)+d(q,p)=L(c)=2\,\tilde{\mathfrak{i}}_M,
\end{align*}
which implies that (see (\ref{new**2}))
\[
d(q_\varepsilon,p_\varepsilon)=d(q_\varepsilon,p)+d(p,p_\varepsilon)=L(c_3|_{[t_\varepsilon-d(p,q),d(q,p)]})+L(c_1|_{[0,\varepsilon]}).
\]
Hence, $c=c_1*c_3$ is smooth at $p$.
\end{proof}

In \cite{Sh1}, Shen introduces T-curvature, which is an important non-Riemannian quantity. However, the definition of the bound on T-curvature seems a little complicated. For convenience, we give a new definition of the bound on T-curvature. Also refer to \cite{Sh1,Z2} for more details.
\begin{definition}
Given $y,v\in T_xM$ with $y\neq0$, define the
T-curvature $\mathbf{T}$ as
\begin{equation*}
\mathbf{T}_y(v):=g_y(\nabla^V_vV,y)-g_y(\nabla_v^YV,y),
\end{equation*}
where $V$ (resp. $Y$) is a vector field with $V_x=v$ (resp. $Y_x=y$).
Set
\[
\mathbf{T}_p:=\sup_{y,v\in S_pM}|\mathbf{T}_{y}(v)|,\ \mathbf{T}_M:=\sup_{p\in M}\mathbf{T}_p.
\]
\end{definition}
Clearly, for a compact Finsler manifold, $\mathbf{T}_M$ is finite. And $\mathbf{T}_M=0$ if and only if $F$ is Berwalden.
By the proof of \cite[Theorem 1.1]{Z2}, we have the following result.
\begin{theorem}\label{closedgeodesci2}
Let $(M,F)$ be a compact Finsler $n$-manifold with $\mathbf{K}_M\geq k$, $\mathbf{T}_M \leq \tau$, $\Lambda_F\leq \Lambda$ and $\diam(M)\leq D$.
Then for any simple closed geodesic $\gamma$,
\[
L(\gamma)\geq\frac{\mu(M)}{c_{n-2}\Lambda^\frac{3n}{2}\left[\frac{\mathfrak{s}^{n-1}_k\left({\min\left\{D,\frac{\pi}{2\sqrt{k}}\right\}}\right)}{n-1}+\Lambda^{\frac12}\,\tau\int_0^{D}\mathfrak{s}^{n-1}_k(t) dt\right]},
\]
where $\mu(M)$ is either the Busemann-Hausdorff volume or the Holmes-Thompson volume of $M$ and $c_{n-2}:=\vol(\mathbb{S}^{n-2})$.
\end{theorem}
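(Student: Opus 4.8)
The plan is to carry out the classical Cheeger tube argument in the Finsler setting, along the lines of the proof of \cite[Theorem 1.1]{Z2}: a short simple closed geodesic forces $M$ to be covered by a thin tube around it, whose $\mu$-volume is then too small to admit the lower bound $V$. \emph{Step 1 (covering).} Let $\gamma\colon[0,L]\to M$ be a unit-speed simple closed geodesic, $L=L(\gamma)$. For $t\in[0,L]$ put $N_t:=\{v\in S_{\gamma(t)}M:\ \mathcal{L}_v(\dot\gamma(t))=0\}$, a topological $(n-2)$-sphere, and define the normal exponential map $E(t,s,v):=\exp_{\gamma(t)}(sv)$ for $t\in[0,L]$, $v\in N_t$, $0\le s\le D$. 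I claim $E$ is onto $M$. Given $x\in M$, choose $t_0$ minimising $t\mapsto d(\gamma(t),x)$; by the first variation formula \cite[p.\,123]{BCS} the minimal geodesic $\sigma$ from $\gamma(t_0)$ to $x$ satisfies $g_{\dot\sigma(0)}(\dot\sigma(0),\dot\gamma(t_0))=0$, so $v:=\dot\sigma(0)/L(\sigma)\in N_{t_0}$, while $L(\sigma)=d(\gamma(t_0),x)\le d(\gamma(0),x)\le\diam(M)\le D$; hence $x=E(t_0,L(\sigma),v)$. Moreover a minimal geodesic to $\gamma$ carries no interior focal point, so when $k>0$ one may restrict to $s\le\pi/\sqrt k$ (where the comparison functions below remain nonnegative); for $k\le 0$ — the case relevant to Theorem \ref{Cheegetype} — $\mathfrak{s}_k$ and $\mathfrak{s}_k'$ are nonnegative for all $s\ge 0$.

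\emph{Step 2 (Jacobian estimate).} Fix $(t,v)$ and set $c(s):=\exp_{\gamma(t)}(sv)$. The image of $dE$ along $c$ is spanned by $\dot c$ and $n-1$ Jacobi fields (for the Chern connection with reference vector $\dot c$): the $n-2$ fields $J_1,\dots,J_{n-2}$ arising from varying $v$ inside $N_t$, with $J_i(0)=0$ and $\{J_i'(0)\}$ a $g_{\dot c(0)}$-orthonormal basis of $\dot c(0)^{\perp}\cap\dot\gamma(t)^{\perp}$; and one field $J_0$ from varying $t$, with $J_0(0)=\dot\gamma(t)$ (note $J_0(0)\perp\dot c(0)$ because $\mathcal{L}_v(\dot\gamma(t))=0$). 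Since $\gamma$ is a geodesic, its ``second fundamental form'' vanishes up to a T-curvature correction, so $\|J_0'(0)\|$ is bounded by a multiple of $\Lambda^{1/2}\tau$ — this is precisely where $\mathbf{T}_M$ enters, the analogous quantity being zero in the Riemannian case. Feeding $\mathbf{K}_M\ge k$ into the Finsler index/Rauch estimates of Appendix A and using $\Lambda_F\le\Lambda$ to compare the various reference metrics, one obtains, on the relevant range of $s$, the bounds $\|J_i(s)\|\le C\,\mathfrak{s}_k(s)$ for $1\le i\le n-2$ and $\|J_0(s)\|\le C\big((\mathfrak{s}_k'(s))^{+}+\Lambda^{1/2}\tau\,\mathfrak{s}_k(s)\big)$ with $C$ a power of $\Lambda$. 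Consequently the Jacobian of $E$ at $(t,s,v)$, measured against the $g_{\dot c}$-volume, is at most $\Lambda^{3n/2}\big((\mathfrak{s}_k'(s))^{+}+\Lambda^{1/2}\tau\,\mathfrak{s}_k(s)\big)\mathfrak{s}_k^{n-2}(s)$, the exponent $3n/2$ absorbing the comparison between $\mu$ and the $g_{\dot c}$-volume form, the distortion of $\exp$ relative to $g_{\dot c}$-norms, and the $\Lambda$-dependence of the comparison estimates themselves.

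\emph{Step 3 (integration).} Put $\rho:=\min\{D,\pi/\sqrt k\}$ (with $\pi/\sqrt k:=+\infty$ for $k\le 0$). Applying the area formula to the surjection $E$, and using $\nu(N_t)\le\Lambda^{(n-2)/2}c_{n-2}$ (with $c_{n-2}=\vol(\mathbb{S}^{n-2})$ and the extra power of $\Lambda$ absorbed above), I get
\[
\mu(M)\le\Lambda^{3n/2}\,L\,c_{n-2}\int_0^{\rho}\big((\mathfrak{s}_k'(s))^{+}+\Lambda^{1/2}\tau\,\mathfrak{s}_k(s)\big)\mathfrak{s}_k^{n-2}(s)\,ds.
\]
Since $\mathfrak{s}_k(0)=0$, one has $\int_0^{r}\mathfrak{s}_k'(s)\mathfrak{s}_k^{n-2}(s)\,ds=\mathfrak{s}_k^{n-1}(r)/(n-1)$; as $(\mathfrak{s}_k')^{+}$ vanishes past $\pi/(2\sqrt k)$, the first summand integrates to $\mathfrak{s}_k^{n-1}\big(\min\{D,\pi/(2\sqrt k)\}\big)/(n-1)$, while the second is $\Lambda^{1/2}\tau\int_0^{\rho}\mathfrak{s}_k^{n-1}(s)\,ds\le\Lambda^{1/2}\tau\int_0^{D}\mathfrak{s}_k^{n-1}(s)\,ds$. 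Substituting these and solving the inequality for $L=L(\gamma)$ yields exactly the asserted lower bound.

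The main obstacle is Step 2: establishing the Finslerian Jacobi-field and volume comparisons with the stated powers of $\Lambda$, and in particular pinning down the contribution of $\mathbf{T}_M$ to $\|J_0'(0)\|$, which quantifies the failure of a Finsler geodesic to be totally geodesic with respect to a \emph{fixed} reference vector; one must also be slightly careful about the range of $s$ and the sign of $\mathfrak{s}_k$ when $k>0$. All of this is essentially carried out in \cite{Z2}; the only genuinely new input here is the perpendicular foot-point construction of Step 1, which lets one rerun that computation with the normal bundle of $\gamma$ in place of a distance sphere.
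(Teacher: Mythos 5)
Your outline is correct and is essentially the Cheeger tube argument that the paper invokes by citing \cite[Theorem 1.1]{Z2}: cover $M$ by the normal exponential map from the foot-point construction along $\gamma$, bound the Jacobian using Rauch/index estimates with the T-curvature controlling the non-vanishing of $J_0'(0)$ (the Finslerian failure of $\gamma$ to be ``totally geodesic'' relative to the transverse reference vector), then integrate and solve for $L(\gamma)$. The paper gives no independent proof here, so your sketch is a faithful recapitulation of the same route; the points you flag as the main work (the precise $\Lambda$-powers in the Jacobian bound and the $\mathbf{T}_M$-contribution to $J_0'(0)$) are exactly what \cite{Z2} supplies.
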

Theorem \ref{closedgeodesci} together with Theorem \ref{closedgeodesci2} then yields Theorem \ref{injectiveestimeat}.

\begin{remark}By Theorem \ref{closedgeodesci} and the standard arguments (see \cite{AM,P}), one can show the following result, which is an extension of  the results in \cite{K,R,R2}.

Let $(M,F)$ be an even-dimensional, compact Finsler manifold with $0<\mathbf{K}_M\leq k$.

\noindent (1) If $M$ is orientable, then
\[
\mathfrak{i}_M\geq \frac{\text{Conj}_M }{\lambda_F} \geq \frac{\pi}{\lambda_F\sqrt{k}}.
\]
In particular, if $F$ is reversible, then $\mathfrak{i}_M=\text{Conj}_M$.

\noindent (2) If $M$ is not orientable, then
\[
\mathfrak{i}_M\geq\frac{\pi}{\lambda_F(1+\lambda_F)\sqrt{k}}.
\]

\end{remark}

\section{The convex radius of a Berwald manifold}
Recall that a subset $A\subset M$ is called {\it strongly (geodesically) convex} if for any $p, q\in A$, there exists a geodesic $\gamma_{pq}$ such
that $\gamma_{pq}$ is the unique minimizer in $M$ from $p$ to $q$, and $\gamma_{pq}$ is  the only
geodesic contained  in $A$ from $p$ to $q$.

\begin{definition}Let $(M,F)$ be a forward complete Finsler manifold.
The convexity radius at a point $x\in M$ is defined by
\[
\text{Conv}_x:=\sup\{r>0:\,B^+_x(s)\text{ is strongly convex for any }s<r \}.
\]
And the convexity radius of $(M,F)$ is defined by $\text{Conv}(M,F):=\inf_{x\in M}\text{Conv}_x$.
\end{definition}

In \cite{Sh1}, Shen estimates convexity radii  in the reversible Finslerian case. Here, we give an estimate on the convexity radius of a Berwald manifold.
\begin{theorem}\label{Berwaldinjecitve}
Let $(M,F)$ be a forward complete Berwald manifold with $\mathbf{K}_M\leq k$, $\mathfrak{i}_M\geq \varsigma$ and $\lambda_F\leq \lambda$. Then
\[
\text{Conv}(M,F)\geq \min\left\{\frac{\pi}{2\sqrt{k}},\frac{\varsigma}{\lambda(1+{\lambda})} \right\}.
\]
\end{theorem}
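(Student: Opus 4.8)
The plan is to follow the classical Whitehead-type argument for the convexity radius, adapted to the Berwald setting, using the upper curvature bound $\mathbf{K}_M\le k$ together with the injectivity radius bound $\mathfrak{i}_M\ge\varsigma$ to control both the size of geodesic balls and the behaviour of geodesics entering them. First I would fix a point $x\in M$ and a radius
\[
r<\min\left\{\frac{\pi}{2\sqrt{k}},\ \frac{\varsigma}{\lambda(1+\lambda)}\right\},
\]
and show that the forward ball $B^+_x(r)$ is strongly convex. The constraint $r<\pi/(2\sqrt{k})$ is what makes the squared-distance function from $x$ (and, more importantly, the distance function from any interior point) strictly convex along geodesics: by the Rauch/Jacobi-field estimates in Appendix A, on a Berwald manifold with $\mathbf{K}_M\le k$ the usual comparison with the space form of curvature $k$ applies to the second variation of arclength, so inside a ball of radius $<\pi/(2\sqrt{k})$ there are no interior conjugate points and the distance function from a point is geodesically convex. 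The role of $\varsigma/(\lambda(1+\lambda))$ is to guarantee that any minimizing geodesic between two points of $B^+_x(r)$ stays within the injectivity domain: if $p,q\in B^+_x(r)$ then, using the triangle inequality and the reversibility bound $\lambda_F\le\lambda$, one gets $d(p,q)\le d(p,x)+d(x,q)\le \lambda\, d(x,p)+d(x,q)<(1+\lambda)r<\varsigma/\lambda$, and likewise a minimal geodesic from $p$ to $q$ has all its points within forward-and-backward distance less than $\varsigma$ of $x$; hence that minimal geodesic is unique.

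The key steps, in order, would be: (1) \emph{Uniqueness of the minimizer in $M$.} Show that for $p,q\in B^+_x(r)$ the minimal geodesic $\gamma_{pq}$ in $M$ is unique — this follows from the length estimate above together with $\mathfrak{i}_M\ge\varsigma$, since two distinct minimizers of length $<\varsigma$ from $p$ would force a cut point of $p$ at distance $<\varsigma$. (2) \emph{Strict convexity of the distance function.} Using Appendix A, show that for any $y\in B^+_x(r)$ the function $t\mapsto d(y,\sigma(t))$ is strictly convex along any geodesic $\sigma$ lying in $B^+_x(r)$, provided the relevant distances stay below $\pi/(2\sqrt{k})$ and below $\varsigma$; here one uses that on a Berwald manifold the Chern connection and hence the second-variation formula behave as in the Riemannian case along a fixed geodesic (this is also where Appendix B on parallel transport is relevant, since it lets one apply the space-form Jacobi-field comparison). (3) \emph{$\gamma_{pq}\subset B^+_x(r)$.} Deduce that $\gamma_{pq}$ is contained in $B^+_x(r)$: the function $t\mapsto d(x,\gamma_{pq}(t))$ cannot attain an interior maximum $\ge r$ by the strict convexity in step (2), so it is bounded by $\max\{d(x,p),d(x,q)\}<r$. (4) \emph{Uniqueness of the geodesic inside $B^+_x(r)$.} If $\tau$ were another geodesic in $B^+_x(r)$ from $p$ to $q$, distinct from $\gamma_{pq}$, then $t\mapsto d(x,\tau(t))$ is again strictly convex, so $\tau\subset B^+_x(r)$ and $\tau$ has length $<\varsigma$; but $\tau$ then lies in the injectivity domain of $p$, where the exponential map is a diffeomorphism, forcing $\tau=\gamma_{pq}$ — contradiction.

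The main obstacle is step (2): making the second-variation/Jacobi-field comparison work cleanly for a \emph{nonreversible} Berwald metric, where the distance function $d(y,\cdot)$ is only forward-related to $d(\cdot,y)$ and the relevant geodesic emanating from $y$ may need to be traversed in a definite direction. The reversibility factor $\lambda$ is precisely what is inserted to absorb the asymmetry in all the triangle-inequality bookkeeping, so the care is in tracking which distances are ``forward'' and which are ``backward'' and checking that each stays below the appropriate threshold ($\pi/(2\sqrt{k})$ for the curvature comparison, $\varsigma$ for injectivity). The Berwald hypothesis is essential here because it makes the parallel transport along a geodesic a linear isometry of the tangent spaces with their Minkowski norms (Appendix B), so that the Jacobi equation along $\gamma_{pq}$ can be compared with the constant-curvature model exactly as in the Riemannian proof; without it the T-curvature term would obstruct the argument. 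Once step (2) is in place, steps (1), (3), (4) are the standard Whitehead argument and go through routinely.
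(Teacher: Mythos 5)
Your plan is the right one and follows the same blueprint as the paper: bound the length of a minimizer between two points of $B^+_x(r)$ via the reversibility constant to get uniqueness, then use the Hessian comparison for $\rho(\cdot)=d(x,\cdot)$ (which takes the Riemannian form on a Berwald manifold since $\mathbf T=0$) to keep the geodesic inside the ball. But your step (3) as stated has a gap, and it is precisely the gap that the paper's proof is organized to avoid.

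You argue that ``$t\mapsto d(x,\gamma_{pq}(t))$ cannot attain an interior maximum $\ge r$ by the strict convexity.'' This assumes the Hessian comparison applies along all of $\gamma_{pq}$, i.e., that $\gamma_{pq}$ already lies in $B^+_x\bigl(\min\{\mathfrak{i}_M,\pi/(2\sqrt{k})\}\bigr)$. A priori you only know $d(x,\gamma_{pq}(t))\le d(x,p)+d(p,\gamma_{pq}(t))<r+\varsigma/\lambda$, and since the hypotheses only force $r<\pi/(2\sqrt k)$ and $\varsigma>\lambda(1+\lambda)r$, the quantity $r+\varsigma/\lambda$ need not be below $\pi/(2\sqrt{k})$. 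So you cannot apply the comparison along the whole geodesic before knowing it stays near $x$ — which is exactly what you are trying to prove. The paper sidesteps this with a connectedness argument: it defines $\mathrm{Co}_p=\{q\in B^+_x(r):\gamma_{pq}\subset B^+_x(r)\}$, shows it is open (via Arzel\`a–Ascoli on limiting geodesics) and closed (via the Hessian comparison applied only to geodesics already known to lie in $\overline{B^+_x(r)}$, where the comparison is legitimate), and concludes $\mathrm{Co}_p=B^+_x(r)$. That open-closed step is the load-bearing idea that your direct maximum-principle argument skips. The paper also treats the degenerate case $x\in\gamma_{pq}$ separately (where $\rho$ is not smooth at $x$), replacing the Hessian estimate by direct triangle-inequality bounds; your outline does not mention this case and it would need to be handled.

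Two smaller points. First, only convexity (not strict convexity) of $\rho\circ\gamma_{pq}$ is needed for step (3); the paper uses $\frac{d^2}{dt^2}(\rho\circ\gamma_{pq})\ge0$ here and reserves the strict version for Proposition 4.3. Second, your step (4) asserts without justification that a geodesic $\tau\subset B^+_x(r)$ from $p$ to $q$ has length $<\varsigma$; this does hold, but it requires the observation that $B^+_x(r)\subset B^+_p(\varsigma/\lambda)$, so any geodesic emanating from $p$ that remains in $B^+_x(r)$ must, by $\mathfrak{i}_M\ge\varsigma$, satisfy $d(p,\tau(t))=t$ until it exits, forcing its length below $\varsigma/\lambda$; from there injectivity of $\exp_p$ gives $\tau=\gamma_{pq}$. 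The paper actually leaves this second part of strong convexity implicit, so adding it explicitly is a small improvement, but as written your proposal jumps over the length bound.
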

\begin{proof} Choosing an arbitrary point $x\in M$ and any $r\in (0,\min\{\frac{\pi}{2\sqrt{k}},\frac{\varsigma}{\lambda(1+\lambda)} \})$, we now show that $B^+_x(r)$ is strictly convex.

For each two points $p_1,p_2\in B^+_x(r)$, let $\gamma_{p_1p_2}(t)$, $0\leq t\leq l$ denote a normal minimal geodesic from $p_1$ to $p_2$. Since
\[
l=L(\gamma_{p_1p_2})\leq d(p_1,x)+d(x,p_2)\leq \lambda\cdot d(x,p_1)+d(x,p_2)<{\varsigma}/{\lambda},
\]
 $\gamma_{p_1p_2}$ is the unique minimal geodesic from $p_1$ to $p_2$ and hence, $\rho(\cdot):=d(x,\cdot)$ is smooth on $\gamma_{p_1p_2}([0,l])-\{x\}$.

 Fix a point $p\in B^+_x(r)$ and set
\[
\text{Co}_p:=\{q\in B^+_x(r):\, \gamma_{pq}([0,l])\subset B^+_x(r)\}.
\]

We first prove that $\text{Co}_p$ is an open subset of $B^+_x(r)$. For any sequence $\{q_n\}\subset  B^+_x(r)-\text{Co}_p$ converging to some point $q\in B^+_x(r)$, there exists $t_n\in (0,l)$ such that $\rho(\gamma_{pq_n}(t_n))\geq r$  for each $n$.
Since $\{\gamma_{pq_n}\}$ is uniformly bounded, by the Arzel\'a-Ascoli theorem \cite{DYS}, we can assume  that $\{\gamma_{pq_n}\}$ converges to the minimal geodesic $\gamma_{pq}$ and $t_n\rightarrow t_0$. Clearly, $\rho(\gamma_{pq}(t_0))\geq r$, which implies $q\in B^+_x(r)-\text{Co}_p$. Hence, $B^+_x(r)-\text{Co}_p$ is a closed subset of $B^+_x(r)$.

Secondly, we claim that $\text{Co}_p$ is a closed subset of $B^+_x(r)$. It suffices to show $\partial \text{Co}_p\subset \text{Co}_p$. Given any point $q\in  \partial \text{Co}_p$, the argument is divided into the following two cases:

\noindent\textbf{Case 1.}
Suppose $x\notin\gamma_{pq}$. Then $\rho\circ\gamma_{pq}(t)$ is smooth, and the Hessian comparison theorem \cite{Sh1} implies that $\frac{d^2}{dt^2}\rho\circ\gamma_{pq}(t)\geq 0$, which implies that \[
\rho\circ\gamma_{pq}(t)\leq \max\{\rho\circ\gamma_{pq}(0),\rho\circ\gamma_{pq}(1)\}<r.
\]
Hence, $\gamma_{pq}([0,1])\subset B^+_p(r)$ and therefore, $q\in \text{Co}_p$.

\noindent\textbf{Case 2.} If there exists $t_0\in [0,l]$ such that $\gamma_{pq}(t_0)=x$, then for $t\in [t_0,l]$,
\[
\rho(\gamma_{pq}(t))=L(\gamma_{pq}|_{[t_0,t]})\leq L(\gamma_{pq}|_{[t_0,l]})=\rho(q)<r.
\]
On the other hand, there exists $s\in [0,t_0)$ such that $\rho(\gamma_{pq}(s))<r/{\lambda^2}$. Thus, for $t\in [s,t_0]$,
\begin{align*}
\rho(\gamma_{pq}(t))&\leq \lambda\cdot d(\gamma_{pq}(t),x)\leq\lambda\cdot d(\gamma_{pq}(s),x)\leq \lambda^2 \cdot\rho(\gamma_{pq}(s))<r.
\end{align*}
Note that $\gamma_{pq}(t)$, $t\in [0,s]$ is the unique minimal geodesic from $p$ to $\gamma_{pq}(s)$. Since $\rho(\gamma_{pq}(s))<r$, the argument of Case 1 implies that  $\gamma_{pq}([0,s])\subset B^+_p(r)$.
Hence, $\gamma_{pq}([0,1])\subset B^+_p(r)$ and therefore, $q\in \text{Co}_p$.

From above, we see that $\text{Co}_p$ is a both open and closed subset of $B^+_x(r)$. Since $x\in \text{Co}_p$, $\text{Co}_p=B^+_x(r)$ and hence, $B^+_x(r)$ is strictly convex.
\end{proof}

\begin{remark}\label{convexrad}Denote by $\mathbf{T}^s_M$ the upper bound of T-curvature in the sense of Shen \cite{Sh1}.
Using the argument above, one can obtain an estimate
on the convexity radius of a general Finsler manifold. More precisely,
let $(M,F)$ be a forward complete Finsler manifold with $\mathbf{K}_M\leq k$, $\mathfrak{i}_M\geq \varsigma$, $\lambda_F\leq \lambda$ and $\mathbf{T}^s_M\leq \xi$. Then
\[
\text{Conv}(M,F)\geq \min\left\{\mathfrak{v},\frac{\varsigma}{\lambda(1+{\lambda})} \right\},
\]
where $\mathfrak{v}$ is the first positive zero of the following equation
\[
{\mathfrak{s}'_k(t)}-\xi\cdot \mathfrak{s}_k(t)=0.
\]
This estimate coincides with Shen's result \cite[Theorem 15.2.1]{Sh1} in the reversible case.
\end{remark}

\begin{proposition}\label{impconvele}
 Let $(M,F)$ be a forward complete Berwald manifold with $\mathbf{K}_M\leq k$ and  $\mathfrak{i}_M\geq \varsigma$. Set $l:=\min\{\pi/(2\sqrt{k}),{\varsigma} \}$. Given any $x\in M$ and any $0<r< l$,
if a geodesic $\gamma$ is tangent to the forward sphere $S^+_x (r)=\partial B^+_x(r)$ at $q$, then there exists a small neighborhood $U_q$ of $q$ such that $U_q\cap \gamma$ is outside $\overline{B^+_x(r)}-\{q\}$.
\end{proposition}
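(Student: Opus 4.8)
The plan is to reduce the statement to the behaviour of the distance function $\rho(\cdot):=d(x,\cdot)$ along the geodesic $\gamma$ near the tangency point $q$, using the fact that $r<l$ keeps us inside the injectivity radius so that $\rho$ is smooth near $q$ (since $q\neq x$ when $r>0$), and then invoke the Hessian comparison theorem as already used in the proof of Theorem \ref{Berwaldinjecitve}. First I would parametrize $\gamma$ by arclength with $\gamma(0)=q$, and observe that $l\le \pi/(2\sqrt k)$ together with $\mathbf{K}_M\le k$ gives, via the Hessian comparison theorem \cite{Sh1}, that along any unit-speed geodesic not through $x$ lying in $B^+_x(r)$ one has
\[
(\rho\circ\gamma)''(t)\ \ge\ \frac{\mathfrak{s}'_k(\rho\circ\gamma(t))}{\mathfrak{s}_k(\rho\circ\gamma(t))}\Big(1-g_{\gamma'}(\gamma',\nabla\rho)^2\Big)\ \ge\ 0,
\]
the last inequality because $\mathfrak{s}'_k>0$ on $(0,\pi/(2\sqrt k))$ and the Cauchy--Schwarz-type bound $|g_{\gamma'}(\gamma',\nabla\rho)|\le 1$ holds on a Berwald manifold where $\nabla\rho$ has $g$-norm one with respect to its own fundamental tensor. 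Hence $\rho\circ\gamma$ is a (smooth) convex function of $t$ on a small interval about $0$.

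Next, "tangent to $S^+_x(r)$ at $q$" means $(\rho\circ\gamma)(0)=r$ and $(\rho\circ\gamma)'(0)=0$; this is where I would need a short lemma that $\gamma'(0)$ annihilating $d\rho_q$ is exactly the tangency condition, which follows from $S^+_x(r)$ being a smooth hypersurface with $\ker d\rho_q = T_qS^+_x(r)$ (again valid since $r<\mathfrak{i}_M$). A convex function with a critical point at $t=0$ attains its minimum there on the interval of convexity, so $(\rho\circ\gamma)(t)\ge r$ for all small $t$, with equality only possibly on a set where $(\rho\circ\gamma)''\equiv 0$; I would rule out a whole subinterval of equality by noting that $(\rho\circ\gamma)''=0$ forces $g_{\gamma'}(\gamma',\nabla\rho)^2=1$, i.e. $\gamma'=\pm\nabla\rho$ (a radial direction), which contradicts $\gamma'(0)\perp d\rho_q$ unless $\gamma$ is a point — so $\gamma$ cannot stay on $S^+_x(r)$. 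This gives $U_q\cap\gamma\subset \overline{B^+_x(r)}^{\,c}\cup\{q\}$, i.e. $U_q\cap\gamma$ lies outside $\overline{B^+_x(r)}-\{q\}$, as claimed.

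The step I expect to be the main obstacle is the justification of the Hessian comparison inequality and the inner-product bound $|g_{\gamma'}(\gamma',\nabla\rho)|\le 1$ in the genuinely Finslerian (non-reversible, $\lambda_F>1$) setting: in Finsler geometry $\nabla\rho$ is defined through the Legendre transformation, and the relevant comparison estimate requires $\rho$ smooth and $\gamma$ to be a geodesic of the Chern connection, which on a Berwald manifold behaves as in the Riemannian case because the connection coefficients are $y$-independent (cf.\ App.\,B). I would handle this by citing the Hessian comparison theorem of \cite{Sh1} and using that on a Berwald manifold the reference vector field along the geodesic may be taken to be $\gamma'$ itself, so the gradient estimate $g_{\nabla\rho}(\nabla\rho,\nabla\rho)=1$ together with Cauchy--Schwarz for the (positive-definite) bilinear form $g_{\gamma'}$ yields $|g_{\gamma'}(\gamma',\nabla\rho)|\le \sqrt{g_{\gamma'}(\gamma',\gamma')}\sqrt{g_{\gamma'}(\nabla\rho,\nabla\rho)}$ and then controlling $g_{\gamma'}(\nabla\rho,\nabla\rho)$ via the uniformity/reversibility constants; in fact, since we only need $(\rho\circ\gamma)''\ge 0$ and not a sharp constant, the coarse bound suffices and the argument of Case 1 in the proof of Theorem \ref{Berwaldinjecitve} can be quoted almost verbatim.
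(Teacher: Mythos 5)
Your overall strategy is the same as the paper's: parametrize by arclength, use tangency to see that $(\rho\circ\gamma)'$ vanishes at the tangency parameter, then invoke the Hessian comparison theorem to control $(\rho\circ\gamma)''$. The paper simply notes that on a Berwald manifold the Hessian comparison gives $(\rho\circ\gamma)''(t)>0$ on a small interval around $t_0$ (strict, because at $t_0$ the tangent vector is perpendicular to $\nabla\rho$ and $\mathfrak{s}'_k(r)>0$ for $r<\pi/(2\sqrt{k})$), so $\rho\circ\gamma$ has a strict local minimum at $t_0$; there is no need for the detour through non-strict convexity and ruling out intervals of equality.

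There is, however, a genuine gap in the way you wrote the comparison inequality. You use the reference vector $\gamma'$ in
\[
(\rho\circ\gamma)''(t)\ \ge\ \frac{\mathfrak{s}'_k}{\mathfrak{s}_k}\bigl(1-g_{\gamma'}(\gamma',\nabla\rho)^2\bigr),
\]
and then assert the bound $|g_{\gamma'}(\gamma',\nabla\rho)|\le 1$. But Cauchy--Schwarz for the form $g_{\gamma'}$ only gives $|g_{\gamma'}(\gamma',\nabla\rho)|\le\sqrt{g_{\gamma'}(\gamma',\gamma')}\,\sqrt{g_{\gamma'}(\nabla\rho,\nabla\rho)}=\sqrt{g_{\gamma'}(\nabla\rho,\nabla\rho)}$, which may be as large as $\sqrt{\Lambda_F}>1$; so the right-hand side of your inequality can be negative and the claimed convexity does not follow from it. The ``coarse bound suffices'' patch you propose cannot close this, because $(\rho\circ\gamma)''\ge 0$ is precisely what is at stake. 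The fix is to state the Hessian comparison with respect to $g_{\nabla\rho}$ rather than $g_{\gamma'}$: on a Berwald manifold (so no T-curvature correction) one has $(\rho\circ\gamma)''=\mathrm{Hess}\,\rho(\gamma',\gamma')\ge\frac{\mathfrak{s}'_k(\rho)}{\mathfrak{s}_k(\rho)}\bigl(g_{\nabla\rho}(\gamma',\gamma')-g_{\nabla\rho}(\gamma',\nabla\rho)^2\bigr)$, and since $g_{\nabla\rho}(\nabla\rho,\nabla\rho)=1$, Cauchy--Schwarz for $g_{\nabla\rho}$ makes the bracket nonnegative, with equality only when $\gamma'\parallel\nabla\rho$. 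Tangency gives $g_{\nabla\rho}(\nabla\rho,\gamma'(t_0))=0$, which is incompatible with $\gamma'(t_0)\parallel\nabla\rho$, so the bracket is strictly positive at $t_0$; this at once gives the strict local minimum and makes your interval-of-equality discussion unnecessary.
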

\begin{proof}
Suppose that $\gamma$ is a normal geodesic.
Let $\rho(\cdot):=d(x,\cdot)$. Clearly, for any $p\in B^+_x(r)-\{x\}$, $\rho(p)$ is smooth.
Set $\gamma(t_0):=q$. Since $\nabla\rho$ is the normal vector field along $S^+_p(r)$, we have
\[
\left.\frac{d}{dt}\right|_{t=t_0}\rho(\gamma(t))=g_{\nabla\rho}\left(\nabla\rho,\dot{\gamma}(t_0)\right)=0.\tag{4.1}\label{newgausslemma}
\]
Clearly, $\rho\circ\gamma(t_0)=r< l$ implies that there is a small number $\epsilon>0$ such that $\rho\circ\gamma(t)<l$, for $t\in (t_0-\epsilon,t_0+\epsilon)$.
Thus, it follows from Hessian comparison theorem  that
$
\frac{d^2}{dt^2}\rho(\gamma(t))>0$ for $t\in (t_0-\epsilon,t_0+\epsilon)$.
This together with (\ref{newgausslemma}) yields that $\rho\circ\gamma$ has a minimum at $t_0$, which implies the conclusion.
\end{proof}

In the rest of this section, we assume that $(A,d\mathfrak{m})$ is a measure space of volume $1$, and $(M,F)$ is a forward complete Berwald $n$-manifold.
Given $p\in M$ and $r>0$,
any measurable map $f:A\rightarrow B^+_p(r)$ is called a {\it{mass distribution}} on $B^+_p(r)$. Define a vector filed $V$ on $\overline{B^+_p(r)}$ by
\[
V(x):=-\int_{A}  \exp_x^{-1} f(a) \,d\mathfrak{m}(a)            .
\]
Then we have the following theorem. Refer to \cite{Ka} for the results of the center of mass in the Riemannian case.
\begin{theorem}\label{centermass}Let $(M,F)$ be a forward complete Berwald $n$-manifold with $|\mathbf{K}_M|\leq k$, $\Lambda_F\leq \Lambda$, and $\mathfrak{i}_M\geq\varsigma$. There exists a constant $\mathfrak{r}=\mathfrak{r}(n,k,\Lambda,\varsigma)>0$ such that for each $0<r<\mathfrak{r}$, each $p\in M$ and each measurable map $f:A\rightarrow B^+_p(r)$, there exists a unique point $q\in B^+_p(r)$ with $V(q)=0$. $q$ is called the center of mass $\mathscr{C}(f)$ of $f$.

In particular, $V(q)$ is differentiable and the map $V_{*q}$ is non-degenerate at $q=\mathscr{C}(f)$, where $V_*:TB^+_p(r)\rightarrow TB^+_p(r)$ is defined by
\[
V_*(X)=X^i\frac{\partial V^k}{\partial x^i}\frac{\partial}{\partial x^k},\ \forall X=X^i\frac{\partial}{\partial x^i}
\]
and $(x^i,y^i)$ is a local coordinate system of $TB^+_p(r)$.
\end{theorem}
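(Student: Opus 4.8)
The plan is to treat $V$ as a smooth vector field on the compact ball $\overline{B^+_p(r)}$, to produce a zero by a degree (Poincar\'e--Hopf) argument once one knows that $V$ points outward along $S^+_p(r)$, and to obtain uniqueness and non-degeneracy by showing that the linearization of $V$ at any zero is a small perturbation of the identity. Since $F$ is Berwald the geodesic spray is quadratic in the fibre variable, so $\exp_x$ is genuinely $C^\infty$ (not merely $C^1$ at the origin) and, below the injectivity radius, a diffeomorphism of $\{v\in T_xM:F(v)\le s\}$ onto $\overline{B^+_x(s)}$; in particular $(x,y)\mapsto\exp_x^{-1}y$ is smooth near the diagonal. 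I would fix
\[
\mathfrak r=\mathfrak r(n,k,\Lambda,\varsigma):=\min\left\{\frac{\pi}{2\sqrt k},\ \frac{\varsigma}{\sqrt\Lambda\,(1+\sqrt\Lambda)},\ \frac{1}{\sqrt{C(n,\Lambda)\,k}}\right\}
\]
(with the obvious reading when $k=0$), where $C(n,\Lambda)$ is the constant from the Jacobi estimate below. For $r<\mathfrak r$, Theorem \ref{Berwaldinjecitve} makes $B^+_p(r)$ strongly convex, and for $x,y\in B^+_p(r)$ one has $d(x,y)\le\lambda_F\,d(p,x)+d(p,y)<(\sqrt\Lambda+1)\mathfrak r<\varsigma\le\mathfrak i_M$, so $\exp_x^{-1}f(a)$ is defined, unique, smooth in $x$ and bounded; differentiating under the integral sign, $V$ is a smooth vector field on $\overline{B^+_p(r)}$. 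Reading $\overline{B^+_p(r)}$ in $\exp_p$-coordinates identifies it with the convex body $\{v:F(v)\le r\}$, a smooth compact manifold with boundary diffeomorphic to $\overline{D^n}$.

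\emph{Step 1 ($V$ points outward on $S^+_p(r)$).} Let $\rho:=d(p,\cdot)$. By the Finsler Hessian comparison theorem (cf.\ \cite{Sh1} and the arguments in Proposition \ref{impconvele} and Theorem \ref{Berwaldinjecitve}), for $r<\pi/(2\sqrt k)$ and $B^+_p(r)$ strongly convex the function $\tfrac12\rho^2$ is geodesically convex on $B^+_p(r)$. For $q\in S^+_p(r)$ and $a\in A$, the minimal geodesic from $q$ to $f(a)$ lies in $B^+_p(r)$ and has initial velocity $\exp_q^{-1}f(a)$, so convexity gives $d(\tfrac12\rho^2)_q(\exp_q^{-1}f(a))\le\tfrac12\rho(f(a))^2-\tfrac12 r^2<0$, i.e.\ $d\rho_q(\exp_q^{-1}f(a))<0$. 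Integrating, $d\rho_q(V(q))=-\int_A d\rho_q(\exp_q^{-1}f(a))\,d\mathfrak m(a)>0$; since $d\rho_q(w)=g_{\nabla\rho(q)}(\nabla\rho(q),w)$ and $\nabla\rho(q)$ is outward-pointing, this says $V(q)\neq0$ and $V(q)$ points out of $\overline{B^+_p(r)}$.

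\emph{Step 2 (linearization).} For fixed $y\in B^+_p(r)$, differentiating $\exp_{x}(\exp_{x}^{-1}y)=y$ along a curve through $x_0$ with velocity $w$ (the covariant derivative being that of the linear Berwald connection) realizes the obvious variation as a geodesic variation whose variation field $J$ is a Jacobi field along the minimal geodesic from $x_0$ to $y$, parametrized on $[0,1]$, with $J(0)=w$, $J(1)=0$ and $D_x(\exp_x^{-1}y)|_{x_0}(w)=J'(0)$. In the flat case $J(t)=(1-t)\cdot(\text{parallel transport of }w)$, so $J'(0)=-w$, and the Jacobi-field estimates of Appendix A (in which the uniformity constant enters the comparison) give $\|D_x(\exp_x^{-1}y)|_{x_0}+\mathrm{id}\|\le C(n,\Lambda)\,k\,d(x_0,y)^2$. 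Hence at every $x_0\in B^+_p(r)$,
\[
D_xV|_{x_0}=-\int_A D_x(\exp_x^{-1}f(a))|_{x_0}\,d\mathfrak m(a)=\mathrm{id}+R,\qquad\|R\|\le C(n,\Lambda)\,k\,\mathfrak r^2<\tfrac12 ,
\]
which is invertible with positive determinant. At a zero $q$ of $V$ the Christoffel correction multiplies $V(q)=0$, so this linear map is exactly $V_{*q}$; thus $V$ is differentiable near $q$ and $V_{*q}$ is non-degenerate. Now $V$ is a vector field on $\overline{B^+_p(r)}\cong\overline{D^n}$ pointing outward along the boundary, all of whose zeros are interior and non-degenerate, hence isolated and finitely many; Poincar\'e--Hopf gives $\sum_{V(q)=0}\mathrm{ind}_q(V)=\chi(\overline{D^n})=1$, and each index equals $+1$ by the positivity of $\det(\mathrm{id}+R)$, so $V$ has exactly one zero $q=:\mathscr{C}(f)$. (Alternatively, uniqueness follows from the monotonicity of $\tau\mapsto g_{\dot\sigma(\tau)}(V(\sigma(\tau)),\dot\sigma(\tau))$ along the geodesic $\sigma$ joining two zeros, since its derivative equals $F(\dot\sigma)^2+O(k\mathfrak r^2)>0$ by Step 2 and $\|V\|\lesssim\mathfrak r$ on $B^+_p(r)$, while it must vanish at both endpoints.)

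The substantive point is Step 2: identifying $D_x(\exp_x^{-1}y)$ with the Jacobi endpoint map and controlling it by curvature with the correct $\Lambda$-dependence. This is precisely where the Berwald hypothesis is essential — it makes $\exp$ globally smooth, so that $V$ is differentiable even where $x$ meets $f(A)$, and it makes the connection carrying the Jacobi equation linear, so that Appendix A applies as in the Riemannian case. The remaining care is bookkeeping of the reversibility constant $\lambda_F\le\sqrt\Lambda$ in the triangle inequalities and of the case in Step 1 where the geodesic from $q$ to $f(a)$ passes through $p$ (handled exactly as in the proof of Theorem \ref{Berwaldinjecitve}).
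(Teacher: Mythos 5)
Your proposal is correct and takes essentially the same route as the paper: outward-pointing of $V$ on $S^+_p(r)$ via Hessian comparison (the paper phrases this through Proposition~\ref{impconvele}, you through convexity of $\tfrac12\rho^2$, but the underlying mechanism is the same), a Jacobi-field estimate showing $D_xV$ is a small perturbation of the identity (the paper's Lemma~\ref{newJacobies} applied in Step~2, your $\|D_x(\exp_x^{-1}y)+\mathrm{id}\|\le C(n,\Lambda)k\,d(x_0,y)^2$), and Poincar\'e--Hopf to get exactly one zero with $V_{*q}$ non-degenerate. Your identification of $D_x(\exp_x^{-1}y)(w)$ with $J'(0)$ for the Jacobi field with $J(0)=w$, $J(1)=0$ is a reparametrization of the paper's computation with $U_{s;a}$; your observation that non-degenerate zeros with $\det(\mathrm{id}+R)>0$ automatically have index $+1$ is a slightly cleaner way to get the index count than the paper's contradiction argument on small spheres, and your alternative monotonicity argument along the geodesic joining two zeros (where $\|V\|\lesssim\mathfrak r$ is in fact not needed) is a nice shortcut, but these are presentational variants, not a different method.
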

\begin{proof}Let $\mathfrak{r}:=\frac{1}{2\Lambda}\min\{\frac{\pi}{2\sqrt{k}},\frac{\varsigma}{1+\sqrt{\Lambda}},\mathfrak{t},\frac{1}{40\Lambda^2}\}$, where $\mathfrak{t}$ is as in  Lemma \ref{newJacobies}. Given $p\in M$ and $f$, we consider $V(x)$ defined on $\overline{B^+_p(r)}$.

\noindent \textbf{Step 1.}
First, we show that for each $x\in \partial {B^+_p(r)}$, $V(x)$ is a nonzero outward vector.
For each $a\in A$, set $X_a:=-\exp_{x}^{-1}f(a)$. It is easy to see that the geodesic $\gamma_{X_a}(t)$, $t\in (-\epsilon,0)$ is contained in  $B^+_p(r)$ and $\gamma_{X_a}(t)$, $t\in (0,\epsilon)$ is outside $B^+_p(r)$. Set $\rho(\cdot)=d(p,\cdot)$. Thus,
\[
\left.\frac{d}{dt}\right|_{t=0}\rho(\gamma_{X_a}(t))=g_{\nabla \rho(x)}(\nabla \rho(x),{X_a})\geq 0.
\]
If $g_{\nabla \rho(x)}(\nabla \rho(x),{X_a})=0$, Proposition \ref{impconvele} yields that $\gamma_{X_a}(t)$, $t\in (-\epsilon,0)\cup (0,\epsilon)$ is outside $\overline{B^+_p(r)}$, which is a contradiction. Hence, $g_{\nabla \rho(x)}(\nabla \rho(x),{X_a})>0$ and
\[
g_{\nabla \rho(x)}(\nabla \rho(x),V(x))=g_{\nabla \rho(x)}\left(\nabla \rho(x),\int_{A}{X_a}\,d\mathfrak{m}(a)\right)>0,
\]
which implies that $V(x)$ is a nonzero outward vector.

\noindent \textbf{Step 2.} Now we show that $V$ has only isolated singularities in $B^+_p(r)$. Given $a\in A$ and a geodesic $\gamma(s)$, $s\in [0,1]$ in $B^+_p(r)$, consider the geodesic variation
\[
\sigma_a(t,s)=\exp_{\gamma(s)}(t-1)X_a, \ t\in [0,1],
\]
where $X_a:=-\exp_{\gamma(s)}^{-1}f(a)$.
Clearly, $\sigma_a(0,s)=f(a)$ and $\sigma_a(s,1)=\gamma(s)$. Note that
\[
U_{s;a}(t)=\frac{\partial}{\partial s}\sigma_a(t,s)
\]
is a Jacobi field with
$U_{s;a}(0)=0$ and $U_{s;a}(1)=\dot{\gamma}(s)$. Set
\[
T_{s;a}(t):=\frac{\partial}{\partial t}\sigma_a(t,s)=(\exp_{\gamma(s)})_{*(t-1)X_a}X_a.
\]
Clearly,
\begin{align*}
U'_{s;a}(1) =D_{T_{s;a}}{U_{s;a}} =D_{U_{s;a}}T_{s;a}=D_{U_{s;a}}X_\alpha.
\end{align*}
Thus, Lemma \ref{newJacobies} together with the equalities above implies that
\[
\|\dot{\gamma}(s)-D_{U_{s;a}}X_\alpha\|_{\dot{\gamma}(s)}\leq \frac{1}{20}\|\dot{\gamma}(s)\|_{\dot{\gamma}(s)},
\]
where $\|\cdot\|_{\dot{\gamma}(s)}:=\sqrt{g_{_{\dot{\gamma}(s)}}(\cdot,\cdot)}$.
Since $F$ is Berwalden, we have
\begin{align*}
&\left\|\dot{\gamma}(s)- D_{\dot{\gamma}(s)}V\right\|_{\dot{\gamma}(s)}
=\left\|\int_A(\dot{\gamma}(s)-D_{U_{s;a}}X_\alpha)\,d\mathfrak{m}(a)\right\|_{\dot{\gamma}(s)}\\
\leq & \int_A\|\dot{\gamma}(s)-D_{U_{s;a}}X_\alpha\|_{\dot{\gamma}(s)}\,d\mathfrak{m}(a)\leq \int_A\frac{1}{20}\|\dot{\gamma}(s)\|_{\dot{\gamma}(s)}\,d\mathfrak{m}(a)=\frac{1}{20}\|\dot{\gamma}(s)\|_{\dot{\gamma}(s)},\tag{4.2}\label{lastjA}
\end{align*}
which implies that $V$ has only isolated singularities.

\noindent \textbf{Step 3.} We now show that $V(x)$ has exactly one singularity in $B^+_p(r)$.
Since $B^+_p(r)$ is contractible and $V$ is a outward vector field along the boundary, the sum of index of $V$ in $B^+_p(r)$ is $+1$, which implies that $V$ has at least one  isolated singularity in $B^+_p(r)$.

On the other hand,  for each isolated singularity $z$ in $B^+_p(r)$, let $\gamma(s)$ be a geodesic from $z$. (\ref{lastjA}) implies that
\[
\left.\frac{d}{ds}\right|_{s=0}g_{\dot{\gamma}(s)}(\dot{\gamma}(s),V(\gamma(s)))=g_{\dot{\gamma}(0)}(\dot{\gamma}(0),D_{\dot{\gamma}}V)=\varepsilon_0>0.
\]
Then there exists a small $s_0>0$ such that  for $s\in [0,s_0]$,
\[
\frac{d}{ds}g_{\dot{\gamma}(s)}(\dot{\gamma}(s),V(\gamma(s)))\geq \frac12\varepsilon_0\Rightarrow g_{\dot{\gamma}(s)}(\dot{\gamma}(s),V(\gamma(s)))\geq \frac12\varepsilon_0s.\tag{4.3}\label{outward}
\]
 We claim that there exists a small $l>0$ such that along $\partial B^+_{z}(l)$, $V$ is outward. If the claim is true, then the Poincar\'e-Hopf theorem implies that the index of $V$ at $z$ is $+1$ and therefore, $V$ has exactly one zero in $B^+_p(r)$.

 Suppose that the claim is not true. Let $\xi(\cdot):=d(z,\cdot)$. Then (\ref{outward}) yields that there exists a sequence $l_n\downarrow 0$ and a sequence $y_n\in S_zM$ such that for each $n$, there is a point $x_n=\exp_z(l_ny_n)\in \partial B^+_{z}(l_n)$ is the first point along $\gamma_{y_n}$ with $g_{\nabla\xi(x_n)}(\nabla\xi(x_n),V(x))= 0$. Since $\nabla\xi(x_n)=\dot{\gamma}_{y_n}(l_n)$, (\ref{outward}) implies that $l_n$ is the minimal point of $g_{\dot{\gamma}_{y_n}}(\dot{\gamma}_{y_n}(s),V(\gamma_{y_n}))$ and hence,
 \[
 \left.\frac{d}{ds}\right|_{s=l_n}g_{\dot{\gamma}_{y_n}}(\dot{\gamma}_{y_n}(s),V(\gamma_{y_n}))=g_{\dot{\gamma}_{y_n}(l_n)}(\dot{\gamma}_{y_n}(l_n),D_{\dot{\gamma}_{y_n}}V)\leq 0.\tag{4.4}\label{use1}
 \]
Since $S_zM$ is compact, we can assume that $y_n\rightarrow y_0\in S_zM$. Thus,
\[
\dot{\gamma}_{y_n}(l_n)=(\exp_z)_{*l_ny_n}y_n\rightarrow y_0.\tag{4.5}\label{use2}
\]
(\ref{use1}) together with (\ref{use2}) implies that
 \[
 \left.\frac{d}{ds}\right|_{s=0}g_{\dot{\gamma_{y_0}}(s)}(\dot{\gamma_{y_0}}(s),V(\gamma_{y_0}(s)))=g_{y_0}(y_0,D_{y_0}V)\leq0,
 \]
which is a contradiction. Therefore, the claim is true.

\noindent \textbf{Step 4.} From above, one can see that $V(x)$ is differentiable at every point $x\in B^+_p(r)$, and $D_{X}V\neq 0$  for any $X\in T_{\mathscr{C}(f)}M-\{0\}$. Let $(x^i,y^i)$ be a coordinate system of $TB^+_p(r)$ and let $\gamma(t)$, $t>0$ be a smooth curve from $\mathscr{C}(f)$ with $\dot{\gamma}(0)=X$. Thus,
\begin{align*}
0&\neq D_{X}V=\left[\frac{dV^i}{dt}+\Gamma^i_{jk}({{\gamma}(0)})\dot{\gamma}^j(0)V^k(\mathscr{C}(f))\right]\frac{\partial}{\partial x^i}\\
&=\left.\frac{\partial V^i}{\partial x^k}\right|_{\mathscr{C}(f)}\dot{\gamma}^k(0)\frac{\partial}{\partial x^i}=V_{*\mathscr{C}(f)}(X),
\end{align*}
which implies that $V_{*\mathscr{C}(f)}$ is nonsingular.
 \end{proof}

\section{A Cheeger type finiteness theorem for Berwald manifolds}
Given $n\in \mathbb{N}$, $\Lambda\geq 1$, $\varsigma>0$ and $k\geq 0$, let $\mathfrak{r}:=\mathfrak{r}(n,k,\Lambda,\varsigma)$ and $\mathfrak{C}:=\mathfrak{C}(n,k,\Lambda)$ be defined as in Theorem \ref{centermass} and Lemma \ref{parallelesi}, respectively.
\begin{definition}\label{condition 11}
We say a triple $(R,\varepsilon_1,\varepsilon_2)$ satisfies {{Condition ($\Delta$)}} if
\begin{align*}
(1)\  &0<R\leq\min\left\{\frac{\mathfrak{r}}{40\cdot\Lambda^4},\,\mathcal {C}_0,\, \mathcal {C}_1,\,\mathcal {C}_2\right\},\\
(2)\ &0<\varepsilon_1\leq \frac{R}{12\Lambda^3},\ \varepsilon_2>0,\\
(3)\ &\frac{(1-kR^2)}{\Lambda^5}-\mathcal {C}_{3}(n,k,\Lambda,R,\varepsilon_2)-\frac{2^{2n+6}\Lambda^{4n+6}}{\mathfrak{s}_k(\sqrt{\Lambda}R)}\varepsilon_1 >0,
\end{align*}
where
\begin{align*}
&\mathcal {C}_0(k,\Lambda):=\sup\left\{t>0:\frac{\mathfrak{s}_{-k}(3\Lambda^{\frac52}t)}{3\Lambda^{\frac52}t}\leq 2\right\},\\
&\mathcal {C}_1(n,k,\Lambda):=\sup\left\{t>0:\frac{\int^{\Lambda t}_0\mathfrak{s}_{-k}^{n-1}(s)ds}{\int^{\frac{t}{4\Lambda}}_0\mathfrak{s}_{k}^{n-1}(s)ds}\leq  2(4\Lambda^2)^n\right\},\\
&\mathcal {C}_2(k,\Lambda):=\sup\left\{t>0:\frac{t}{\mathfrak{s}_{-k}(t)}\frac{\mathfrak{s}_{k}(\Lambda^{\frac32}t)}{\mathfrak{s}_{-k}(\sqrt{\Lambda}t)}\geq 1-kt^2 \right\},\\
&\mathcal {C}_3(n,k,\Lambda,R,\varepsilon_2):=\frac{6\Lambda^3 R}{\mathfrak{s}_k(\sqrt{\Lambda}R)}\left( \frac{\mathfrak{s}_{-k}(\sqrt{\Lambda}R)}{\sqrt{\Lambda}R}-1 \right) \frac{\mathfrak{s}_{-k}(\sqrt{\Lambda}R)}{\mathfrak{s}_k(\sqrt{\Lambda}R)}+30 \Lambda^3 \mathfrak{C}(n,k,\Lambda) R^2+\Lambda \varepsilon_2.
\end{align*}
\end{definition}
In the following, we assume that $(R,\varepsilon_1,\varepsilon_2)$ is given and satisfies Condition ($\Delta$).

\begin{definition}\label{Condition1}
Given $N\in \mathbb{N}$, we say a compact Berwald $n$-manifold $(M,F)$ satisfies Condition (1-$N$) if

 (1) $\Lambda_F\leq \Lambda$, $|\mathbf{K}_M|\leq k$, $\mathfrak{i}_{M}\geq \varsigma,$ $\diam(M)\geq D$;

 (2) $M$ can be covered by $N$ convex balls of radius $R/(2\Lambda^\frac32)$
\[
\{ B^+_{p_\alpha}(R/(2{\Lambda}^\frac32)):\ \alpha=1,\ldots, N\}
\]
and such balls $\{B^+_{p_\alpha}(R/(4{\Lambda}^2))\}_{\alpha=1}^N$ are disjoint.
\end{definition}

Let $(M_i,F_i)$, $i=1,2$ be two Berwald $n$-manifolds satisfying Condition (1-$N$). Let $\{ B^+_{p^i_\alpha}(R/(2{\Lambda}^\frac32)):\ \alpha=1,\ldots, N\}$ be the forward convex balls of $M_i$ as in Definition \ref{Condition1}.

Let $(\mathbb{R}^n,\|\cdot\|)$ be a standard Euclidean space.
For each $i$, denote by $\|\cdot\|_i$ the average Riemannian norm on $M_i$ induced by $F_i$, which yields a linear isometry $u^i_\alpha:(\mathbb{R}^n,\|\cdot\|)\rightarrow (T_{p^i_\alpha}M_i,\|\cdot\|_{i})$ for each $\alpha\in\{1,\ldots,N\}$ such that for
\[
\frac{1}{\sqrt{\Lambda}}\leq \frac{F_i(u^i_\alpha(X))}{\|X\|}\leq \sqrt{\Lambda},\ \forall\,X\in \mathbb{R}^n.\tag{5.1}\label{u*}
\]
Hence, $u^i_\alpha:\overline{\mathcal {B}_0(R)}\rightarrow \overline{\mathcal {B}^+_{p^i_\alpha}(\sqrt{\Lambda}R)}$, where $\mathcal {B}_0(R)$ (resp. $\mathcal {B}^+_{p^i_\alpha}(R)$) denotes the ball of radius $R$ centered at the origin in $(\mathbb{R}^n,\|\cdot\|)$ (resp. $(T_{p^i_\alpha}M_i,F_i)$).

 Set
\[
\phi^i_\alpha:=\exp_{p_\alpha}\circ\, u^i_\alpha: \overline{\mathcal {B}_0(R)}\rightarrow \overline{{B}^+_{p^i_\alpha}(\sqrt{\Lambda}R)}.
\]
Clearly, $\phi^i_\alpha(\overline{\mathcal {B}_0(R)})\supset B^+_{p^i_\alpha}(R/\sqrt{\Lambda})$. In particular,
if $\phi^i_\alpha(\overline{\mathcal {B}_0(R)})\cap \phi^i_\beta(\overline{\mathcal {B}_0(R)})\neq\emptyset$, the triangle inequality then yields that
\[
\phi^i_\beta(\overline{\mathcal {B}_0(R)})\subset \phi^i_\alpha(\overline{\mathcal {B}_0(3{\Lambda}^2R)}),\ \phi^i_\alpha(\overline{\mathcal {B}_0(R)})\subset \phi^i_\beta(\overline{\mathcal {B}_0(3{\Lambda}^2R)}).
\]
Hence, for any $\alpha,\beta$ with $\phi^i_\alpha(\overline{\mathcal {B}_0(R)})\cap \phi^i_\beta(\overline{\mathcal {B}_0(R)})\neq\emptyset$, we can define a map
\[
\mathfrak{f}^i_{\beta\alpha}:=(\phi^i_\beta)^{-1}\circ \phi^i_\alpha:\overline{\mathcal {B}_0(R)}\rightarrow \overline{\mathcal {B}_0(3{\Lambda}^2R)}.
\]
The following lemma follows from  Lemma \ref{Jacbi} directly.
\begin{lemma}\label{C_1}There exits a constant $\mathscr{C}=\mathscr{C}(\varsigma,k,\Lambda)$ such that
for any $\alpha,\beta$ with $\phi^i_\alpha(\overline{\mathcal {B}_0(R)})\cap \phi^i_\beta(\overline{\mathcal {B}_0(R)})\neq\emptyset$, we have
\[
\|\mathfrak{f}^i_{\beta\alpha}\|_{C^1}\leq \mathscr{C}.
\]
\end{lemma}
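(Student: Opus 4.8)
The goal is to bound the $C^1$-norm of the transition map $\mathfrak{f}^i_{\beta\alpha}=(\phi^i_\beta)^{-1}\circ\phi^i_\alpha$ uniformly, for all index pairs $\alpha,\beta$ whose charts overlap, and all manifolds $M_i$ in the class under consideration. Since $\phi^i_\alpha=\exp_{p^i_\alpha}\circ\,u^i_\alpha$ and the $u^i_\alpha$ are linear isometries with the two-sided bound \eqref{u*} on the Finsler norm, the content is entirely in estimating the differential of a composition $\exp_{p_\beta}^{-1}\circ\exp_{p_\alpha}$ between two exponential charts whose domains overlap, in terms of the geometric data $k,\Lambda,\varsigma$. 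The plan is to write $\mathfrak{f}^i_{\beta\alpha}$ as $(u^i_\beta)^{-1}\circ(\exp_{p^i_\beta}^{-1}\circ\exp_{p^i_\alpha})\circ u^i_\alpha$, absorb the linear factors $u^i_\alpha,(u^i_\beta)^{-1}$ into the constant using \eqref{u*}, and then estimate the remaining map directly.

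First I would record the $C^0$ bound, which is already implicit in the excerpt: because $\phi^i_\alpha(\overline{\mathcal B_0(R)})\cap\phi^i_\beta(\overline{\mathcal B_0(R)})\neq\emptyset$, the triangle inequality gives $\phi^i_\alpha(\overline{\mathcal B_0(R)})\subset\phi^i_\beta(\overline{\mathcal B_0(3\Lambda^2R)})$, so $\mathfrak{f}^i_{\beta\alpha}$ maps into $\overline{\mathcal B_0(3\Lambda^2R)}$ and $\|\mathfrak{f}^i_{\beta\alpha}\|_{C^0}\le 3\Lambda^2R$. Next, for the $C^1$ bound, I would differentiate: at a point $X\in\overline{\mathcal B_0(R)}$ with image $Y=\mathfrak{f}^i_{\beta\alpha}(X)$, the chain rule gives
\[
d(\mathfrak{f}^i_{\beta\alpha})_X=(u^i_\beta)^{-1}\circ\bigl(d(\exp_{p^i_\beta})_{u^i_\beta(Y)}\bigr)^{-1}\circ d(\exp_{p^i_\alpha})_{u^i_\alpha(X)}\circ u^i_\alpha .
\]
The two outer linear maps contribute a factor controlled by $\Lambda$ via \eqref{u*} (operator norms with respect to the Euclidean norm compare to the Finsler norms up to $\sqrt\Lambda$), so it remains to bound $\|d(\exp_{p})_{v}\|$ from above and $\|(d(\exp_p)_v)^{-1}\|$ from above on the relevant balls. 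This is exactly where Lemma \ref{Jacbi} (the Jacobi-field estimates of Appendix A) enters: a vector in the image of $d(\exp_p)_v$ is the terminal value $J(1)$ of a Jacobi field along $t\mapsto\exp_p(tv)$ with $J(0)=0$, and the curvature bound $|\mathbf K_M|\le k$ together with the radius bound (ensured by $R$ being small compared to $\varsigma$, $\pi/(2\sqrt k)$, and the constants in Condition $(\Delta)$, so that no conjugate points occur and the geodesics stay minimal) gives two-sided comparison of $\|J(1)\|$ with $\|J'(0)\|$ in terms of $\mathfrak s_{\pm k}$. Invertibility of $d(\exp_p)_v$ on this range is guaranteed since $|v|<\varsigma\le\mathfrak i_M$.

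The main obstacle is the non-Riemannian point: the Jacobi field estimates and the comparison of norms $\|\cdot\|_{\dot\gamma(t)}$ at different points along $\gamma$ must be carried out with the flag curvature and the uniformity constant $\Lambda_F$, not with a fixed metric, and moreover the norm used to measure the $C^1$-size is the \emph{average} Riemannian norm $\|\cdot\|_i$ defining $u^i_\alpha$, which differs from each $g_{\dot\gamma}$ by a factor up to $\sqrt\Lambda$. So the estimate has the schematic form: (operator norm w.r.t.\ average metric) $\le\sqrt\Lambda\cdot$(operator norm w.r.t.\ $g_{\dot\gamma}$-norms) $\le\sqrt\Lambda\cdot$(Jacobi comparison factor depending on $k,R$) — and one must check these factors combine into a single constant $\mathscr C(\varsigma,k,\Lambda)$ independent of $i,\alpha,\beta$. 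Since $R=R(n,k,\Lambda,\varsigma)$ is already fixed by Condition $(\Delta)$ (and in particular $R\le\mathfrak r/(40\Lambda^4)$ with $\mathfrak r=\mathfrak r(n,k,\Lambda,\varsigma)$), all quantities $\mathfrak s_{\pm k}(\text{const}\cdot R)$ are themselves bounded in terms of $n,k,\Lambda,\varsigma$ only; feeding these into Lemma \ref{Jacbi} yields the claimed uniform bound, and one simply names the resulting constant $\mathscr C$.
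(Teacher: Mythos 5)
Your proposal spells out exactly the argument the paper compresses into the single remark that Lemma~\ref{C_1} ``follows from Lemma~\ref{Jacbi} directly'': write $\mathfrak{f}^i_{\beta\alpha}=(u^i_\beta)^{-1}\circ\exp_{p^i_\beta}^{-1}\circ\exp_{p^i_\alpha}\circ u^i_\alpha$, apply the chain rule, use (5.1) and the uniformity constant to absorb the linear conjugation and the discrepancy between the average Riemannian norm and the $g_{\dot\gamma}$-norms, and then invoke Lemma~\ref{Jacbi} for the two-sided Jacobi-field bound on $d(\exp_p)$ and its inverse, using that $R$ is small enough (via Condition $(\Delta)$) to keep the relevant radii below $\varsigma$ and $\pi/(2\sqrt k)$. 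This is correct and is essentially the paper's intended route, just with the omitted details filled in.
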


For any two $\alpha,\beta$ with $\phi^i_\alpha(\overline{\mathcal {B}_0(R)})\cap \phi^i_\beta(\overline{\mathcal {B}_0(R)})\neq\emptyset$, there exists a unique minimal geodesic $\gamma^i_{\alpha\beta}(t)$, $0\leq t\leq 1$ from $p^i_\alpha$ to $p^i_\beta$. Let $P^i_{\alpha\beta}$ denote the parallel transformation along $\gamma^i_{\alpha\beta}$ from $T_{p^i_\alpha}M_i$ to $T_{p^i_\beta}M_i$. Define a linear isomorphism $\mathfrak{g}^i_{\beta\alpha} :\mathbb{R}^n\rightarrow \mathbb{R}^n$ by
\[
\mathfrak{g}^i_{\beta\alpha}:=(u^i_\beta)^{-1}P^i_{\alpha\beta}u^i_\alpha.
\]
Since $F_i$ is Berwalden, $F_i(P^i_{\alpha\beta}Y)=F_i(Y)$. Thus, one has the following result.
\begin{lemma}\label{C_2}For any $\alpha,\beta$ with $\phi^i_\alpha({\overline{\mathcal {B}_0(R)}})\cap \phi^i_\beta(\overline{\mathcal {B}_0(R))}\neq\emptyset$, we have
\[
\|\mathfrak{g}^i_{\beta\alpha}\|_0:=\sup_{X\neq0}\frac{\|\mathfrak{g}^i_{\beta\alpha}X\|}{\|X\|}\leq \Lambda.
\]
\end{lemma}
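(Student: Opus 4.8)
The plan is to unwind the definition $\mathfrak{g}^i_{\beta\alpha}=(u^i_\beta)^{-1}P^i_{\alpha\beta}u^i_\alpha$ by tracking a single vector through the three norms in play: the fixed Euclidean norm $\|\cdot\|$ on $\mathbb{R}^n$, the average Riemannian norm $\|\cdot\|_i$ on the tangent spaces of $M_i$, and the Finsler norm $F_i$ itself. Two conversions will be used: (i) each $u^i_\gamma$ is a linear isometry between $(\mathbb{R}^n,\|\cdot\|)$ and $(T_{p^i_\gamma}M_i,\|\cdot\|_i)$, so it (and its inverse) preserves these two norms exactly; (ii) inequality \eqref{u*}, rewritten via this isometry, is precisely $\tfrac{1}{\sqrt{\Lambda}}\,\|W\|_i\le F_i(W)\le\sqrt{\Lambda}\,\|W\|_i$ for every tangent vector $W$ at $p^i_\gamma$.

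Concretely, I would fix $X\in\mathbb{R}^n\setminus\{0\}$ and set $W:=u^i_\alpha X$, so that $\|W\|_i=\|X\|$ and hence $F_i(W)\le\sqrt{\Lambda}\,\|X\|$ by the upper half of \eqref{u*} at $p^i_\alpha$. Since $F_i$ is Berwald, parallel transport preserves the Finsler norm (the identity $F_i(P^i_{\alpha\beta}Y)=F_i(Y)$ noted just above the lemma, established in App.\,B), so $F_i(P^i_{\alpha\beta}W)=F_i(W)\le\sqrt{\Lambda}\,\|X\|$. Applying the lower half of \eqref{u*} at the base point $p^i_\beta$, in the form $\|Z\|_i\le\sqrt{\Lambda}\,F_i(Z)$, to $Z:=P^i_{\alpha\beta}W$, and using that $(u^i_\beta)^{-1}$ is an isometry from $(T_{p^i_\beta}M_i,\|\cdot\|_i)$ onto $(\mathbb{R}^n,\|\cdot\|)$, I obtain
\[
\|\mathfrak{g}^i_{\beta\alpha}X\|=\|(u^i_\beta)^{-1}P^i_{\alpha\beta}W\|=\|P^i_{\alpha\beta}W\|_i\le\sqrt{\Lambda}\,F_i(P^i_{\alpha\beta}W)=\sqrt{\Lambda}\,F_i(W)\le\Lambda\,\|X\|.
\]
Taking the supremum over $X\neq 0$ then gives $\|\mathfrak{g}^i_{\beta\alpha}\|_0\le\Lambda$.

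There is no genuine obstacle in this estimate; the only points demanding care are bookkeeping — not conflating $\|\cdot\|$, $\|\cdot\|_i$ and $F_i$, and invoking \eqref{u*} at the correct endpoint ($p^i_\alpha$ to bound $F_i(W)$ from above, $p^i_\beta$ to bound $\|P^i_{\alpha\beta}W\|_i$ from above). The one substantive input is that parallel transport along $\gamma^i_{\alpha\beta}$ is a Finsler isometry, which is a feature of Berwald metrics (App.\,B) and is exactly the reason the Berwald hypothesis is imposed throughout this section; the hypothesis $\phi^i_\alpha(\overline{\mathcal{B}_0(R)})\cap\phi^i_\beta(\overline{\mathcal{B}_0(R)})\neq\emptyset$ plays no role beyond guaranteeing that $\gamma^i_{\alpha\beta}$, and hence $P^i_{\alpha\beta}$ and $\mathfrak{g}^i_{\beta\alpha}$, are well defined.
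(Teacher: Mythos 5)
Your proof is correct and follows exactly the route the paper intends: the paper's own justification is the single observation immediately preceding the lemma that parallel transport preserves $F_i$ on a Berwald manifold, and your argument spells out the bookkeeping — passing from $\|\cdot\|$ to $\|\cdot\|_i$ via the isometries $u^i_\alpha$, $(u^i_\beta)^{-1}$, and from $\|\cdot\|_i$ to $F_i$ via \eqref{u*}, picking up a factor $\sqrt{\Lambda}$ at each end.
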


By the Arzel\'a-Ascoli theorem, one can easily the following lemma.
\begin{lemma}\label{totallboun}Let $\mathscr{C}$ be as in Lemma \ref{C_1} and let $(\mathbb{R}^n,\|\cdot\|)$ be a Euclidean space. Set
\begin{align*}
&H_1:=\{f:\overline{\mathcal {B}_0(R)}\rightarrow \overline{\mathcal {B}_0(3{\Lambda}^2R)}:\, f\text{ is a embedding map with } \|f\|_{C_1}\leq \mathscr{C}\},\\
&H_2:=\{f:(\mathbb{R}^n,\|\cdot\|)\rightarrow (\mathbb{R}^n,\|\cdot\|):\, f\text{ is a linear map with } \|f\|_0\leq \Lambda\}.
\end{align*}
Then $H_1$ and $H_2$ are totally bounded. That is, for each $\varepsilon>0$, $H_i$ can be covered by a finite number of balls of radius $\varepsilon$.
\end{lemma}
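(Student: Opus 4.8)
The plan is to prove Lemma \ref{totallboun} as a routine application of the Arzel\'a--Ascoli theorem, treating $H_1$ and $H_2$ separately. For $H_1$: since $\overline{\mathcal{B}_0(R)}$ is compact and every $f\in H_1$ satisfies $\|f\|_{C^1}\le \mathscr{C}$, the family $H_1$ is uniformly bounded (the image lies in the fixed compact set $\overline{\mathcal{B}_0(3\Lambda^2 R)}$) and uniformly equicontinuous (the $C^1$-bound on $f$ forces $\|f(x)-f(y)\|\le \mathscr{C}\|x-y\|$, i.e.\ a common Lipschitz constant). Hence $H_1$ is precompact in the $C^0$-topology on $C(\overline{\mathcal{B}_0(R)},\mathbb{R}^n)$, and a precompact subset of a metric space is totally bounded. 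For $H_2$: the set of linear maps with operator norm $\le \Lambda$ is a closed bounded subset of the finite-dimensional space $\mathrm{Hom}(\mathbb{R}^n,\mathbb{R}^n)\cong\mathbb{R}^{n^2}$, hence compact, hence totally bounded. In both cases the conclusion "can be covered by finitely many $\varepsilon$-balls" is exactly the definition of total boundedness.

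First I would recall the statement of Arzel\'a--Ascoli in the form needed: a subset of $C(K,\mathbb{R}^n)$ with $K$ compact is precompact if and only if it is uniformly bounded and equicontinuous (cf.\ \cite{DYS}). Then I would carry out the verification for $H_1$ in two short steps — uniform boundedness from the range constraint $f(\overline{\mathcal{B}_0(R)})\subset\overline{\mathcal{B}_0(3\Lambda^2 R)}$, and equicontinuity from the mean value inequality applied to the $C^1$ bound $\|Df\|\le\mathscr{C}$ — concluding that $\overline{H_1}$ (closure in $C^0$) is compact, so $H_1$ is totally bounded. For $H_2$ I would simply observe that $f\mapsto\|f\|_0$ is a norm on the finite-dimensional vector space of linear endomorphisms of $\mathbb{R}^n$, so $H_2=\{f:\|f\|_0\le\Lambda\}$ is the closed ball of radius $\Lambda$, which is compact and therefore totally bounded.

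There is essentially no main obstacle here; the lemma is purely a soft-analysis packaging step, and the only thing to be careful about is to state the $C^1$-norm convention consistently (so that $\|f\|_{C^1}\le\mathscr{C}$ genuinely yields both the sup bound on $f$ and the Lipschitz bound used for equicontinuity) and to note that total boundedness, not compactness, is what is asserted — so one does not need the members of $H_1$ to be embeddings or the limits to stay in $H_1$; the embedding hypothesis in the definition of $H_1$ plays no role in this particular conclusion and is retained only for later use. I would keep the proof to a few lines, citing Arzel\'a--Ascoli and the equivalence between precompactness in a metric space and total boundedness.
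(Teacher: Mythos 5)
Your overall strategy --- Arzel\'a--Ascoli for $H_1$ and finite-dimensional compactness of the closed operator-norm ball for $H_2$ --- is exactly the route the paper takes (the paper gives no proof, saying only ``By the Arzel\'a--Ascoli theorem, one can easily [show] the following lemma''), and the $H_2$ part is correct as you wrote it. There is, however, a genuine gap in the $H_1$ part that is worth naming, and it is an imprecision you have inherited from the paper itself rather than introduced.

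You state, correctly, that the bound $\|f\|_{C_1}\le\mathscr{C}$ gives a common Lipschitz constant and hence equicontinuity, so Arzel\'a--Ascoli yields precompactness of $H_1$ \emph{in the $C^0$ topology}; hence $H_1$ can be covered by finitely many $C^0$-balls. But the $\varepsilon$-balls that Lemma~\ref{totallboun} must supply are $C^1$-balls: in the proof of Theorem~\ref{Cheegetype} one extracts a subsequence with $\|\mathfrak{f}^{K'_1}_{\beta\alpha}-\mathfrak{f}^{K'_2}_{\beta\alpha}\|_{C_1}\le\varepsilon_1$, and this $C^1$-closeness is precisely the hypothesis of Lemma~\ref{Peter}. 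A uniform $C^1$ bound on a family of maps does \emph{not} give equicontinuity of the derivatives, so Arzel\'a--Ascoli applied to the family only yields $C^0$-precompactness, not $C^1$-precompactness; the standard obstruction is $f_n(x)=\tfrac{1}{n}\sin(nx)$, which is uniformly bounded in $C^1$ while the derivatives $\cos(nx)$ admit no uniformly convergent subsequence. To obtain total boundedness of $H_1$ in the $C^1$ metric one needs a uniform modulus of continuity for the derivatives of the transition maps $\mathfrak{f}^i_{\beta\alpha}$ --- a H\"older or $C^2$ bound --- and then apply Arzel\'a--Ascoli to $(f,Df)$ jointly. Such a bound is in principle available from the two-sided curvature bound and the injectivity radius bound via one more order of Jacobi field estimates, but Lemma~\ref{C_1} as stated records only the $C^1$ bound, so neither your proof nor the paper's one-line appeal to Arzel\'a--Ascoli actually delivers the $C^1$-total-boundedness that the later argument consumes. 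The honest fix is to strengthen Lemma~\ref{C_1} to a $C^{1,\alpha}$ (or $C^2$) bound and then invoke the Arzel\'a--Ascoli argument on derivatives as well.
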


\begin{definition}Given $N\in \mathbb{N}$,
we say two compact Berwald $n$-manifolds $(M_i,F_i)$, $i=1,2$ satisfy Condition (2-$N$) if

(1) $(M_i,F_i)$, $i=1,2$ satisfy Condition (1-$N$);

(2)
$\phi^1_\alpha(\overline{\mathcal {B}_0(R)})\cap \phi^1_\beta(\overline{\mathcal {B}_0(R)})\neq\emptyset\Leftrightarrow \phi^2_\alpha(\overline{\mathcal {B}_0(R)})\cap \phi^2_\beta(\overline{\mathcal {B}_0(R)})\neq\emptyset$, for all $\alpha,\beta\in \{1,\ldots,N\}$.
\end{definition}

The following result is a generalized Peter's lemma, which will be proved in next section. Also refer to \cite{Pe} for Peter's lemma in the Riemannian case.
\begin{lemma}\label{Peter} Let $(R,\varepsilon_1,\varepsilon_2)$ be a triple satisfying Condition ($\Delta$)
and let $(M_i,F_i)$, $i=1,2$ be two closed Berwald manifolds satisfying Condition (2-$N$).
Suppose that  for any $\alpha,\beta\in \{1,\ldots,N\}$ with $\phi^i_\alpha(\overline{\mathcal {B}_0(R)})\cap \phi^i_\beta(\overline{\mathcal {B}_0(R)})\neq\emptyset$, we have
\begin{align*}
&\|\mathfrak{f}^1_{\beta\alpha}-\mathfrak{f}^2_{\beta\alpha}\|_{C_1}\leq \varepsilon_1,\\
&\|\mathfrak{g}^1_{\beta\alpha}-\mathfrak{g}^2_{\beta\alpha}\|_{0}\leq \varepsilon_2.
\end{align*}
Then $M_1$ and $M_2$ are diffeomorphic.
\end{lemma}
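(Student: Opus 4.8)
\textbf{Proof proposal for Lemma \ref{Peter} (generalized Peter's lemma).}

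The plan is to build an explicit diffeomorphism $F\colon M_1\to M_2$ by gluing together the local maps $\psi^{21}_\alpha:=\phi^2_\alpha\circ(\phi^1_\alpha)^{-1}$ defined on $\phi^1_\alpha(\overline{\mathcal B_0(R)})$, using a partition of unity and the center-of-mass construction from Theorem \ref{centermass} to average these local comparison maps into a single globally well-defined smooth map, then verifying it is a local diffeomorphism and a bijection. First I would fix a smooth partition of unity $\{\rho_\alpha\}_{\alpha=1}^N$ on $M_1$ subordinate to the cover $\{B^+_{p^1_\alpha}(R/(2\Lambda^{3/2}))\}$, with each $\rho_\alpha$ supported in the slightly larger ball where $\psi^{21}_\alpha$ is defined; here the disjointness of the smaller balls $\{B^+_{p^1_\alpha}(R/(4\Lambda^2))\}$ in Condition (1-$N$) guarantees the multiplicity of the cover is controlled, which is what keeps all the error estimates uniform in $N$. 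For $x\in M_1$ lying in the supports of $\rho_{\alpha_1},\dots,\rho_{\alpha_m}$, I would define $F(x)$ to be the center of mass $\mathscr C$ of the mass distribution on $M_2$ that places mass $\rho_{\alpha_j}(x)$ at the point $\psi^{21}_{\alpha_j}(x)$; Theorem \ref{centermass} applies provided all these points lie in a common ball $B^+_q(r)$ with $r<\mathfrak r$, which follows because any two indices $\alpha_j$ have overlapping $\phi^i$-balls, so by Lemma \ref{C_1}-type distance estimates and the constraint $R\le \mathfrak r/(40\Lambda^4)$ in Condition ($\Delta$) part (1) the images $\psi^{21}_{\alpha_j}(x)$ are pairwise $O(\Lambda^?R)$-close. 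This makes $F$ well-defined and smooth (smoothness of the center of mass in its data is the ``in particular'' clause of Theorem \ref{centermass}, together with the Berwald hypothesis needed there).

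Next I would show $F$ is a local diffeomorphism. The key point is that each individual $\psi^{21}_\alpha$ is $C^1$-close to each other $\psi^{21}_\beta$ on the overlap: indeed $\psi^{21}_\beta\circ(\psi^{21}_\alpha)^{-1}$ is conjugate, via $\exp$ and the isometries $u^i$, to $\mathfrak f^2_{\beta\alpha}\circ(\mathfrak f^1_{\beta\alpha})^{-1}$ on the $\mathbb R^n$ side, and the hypotheses $\|\mathfrak f^1_{\beta\alpha}-\mathfrak f^2_{\beta\alpha}\|_{C_1}\le\varepsilon_1$ together with the $C^1$-bound $\mathscr C$ from Lemma \ref{C_1} give $\|\psi^{21}_\beta\circ(\psi^{21}_\alpha)^{-1}-\mathrm{id}\|_{C^1}=O(\varepsilon_1)$ after the conversion factors. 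Differentiating the defining relation for $F$ — writing $F(x)=\mathscr C(\{\rho_{\alpha_j}(x),\psi^{21}_{\alpha_j}(x)\})$ and using the implicit characterization $V(F(x))=0$ with $V$ the vector field $-\sum_j\rho_{\alpha_j}(x)\exp^{-1}_{(\cdot)}\psi^{21}_{\alpha_j}(x)$ — one gets $dF_x = -(V_{*F(x)})^{-1}\circ(\text{sum of }d\psi^{21}_{\alpha_j}\text{ terms plus }d\rho_{\alpha_j}\text{ terms})$. The $d\rho_{\alpha_j}$ terms contribute an error proportional to $\|d\rho_\alpha\|\cdot\mathrm{diam}$, which is $O(\varepsilon_1/R\cdot R)=O(\varepsilon_1)$ by the choice $\varepsilon_1\le R/(12\Lambda^3)$ from Condition ($\Delta$) part (2) and the standard $\|d\rho_\alpha\|=O(1/R)$; the $d\psi^{21}_{\alpha_j}$ terms, being all $C^1$-close to one $d\psi^{21}_{\alpha_1}$ which is itself close to the corresponding parallel transport $P$ via the $\mathfrak g$'s and Lemma \ref{newJacobies}-type Jacobi estimates, sum to something within $\varepsilon_2+O(\Lambda^?R)$ of a nondegenerate linear map. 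Plugging these into the quantitative bound in Condition ($\Delta$) part (3), namely $(1-kR^2)/\Lambda^5-\mathcal C_3(n,k,\Lambda,R,\varepsilon_2)-\frac{2^{2n+6}\Lambda^{4n+6}}{\mathfrak s_k(\sqrt\Lambda R)}\varepsilon_1>0$, shows $\det dF_x\ne 0$: this inequality is exactly engineered so that the main term dominates the $\varepsilon_2$-error ($\mathcal C_3$ contains $\Lambda\varepsilon_2$), the curvature/injectivity distortion errors (the other terms of $\mathcal C_3$, which are $O(R)$ and $O(R^2)$), and the $\varepsilon_1$-error (the last term). By symmetry the analogously constructed $G\colon M_2\to M_1$ is also a local diffeomorphism.

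Finally I would upgrade the two local diffeomorphisms to an actual diffeomorphism. Condition (2-$N$) ensures the combinatorial nerve of the cover is the same on both sides, so $F$ and $G$ are built from ``dual'' data; I expect to show $G\circ F\colon M_1\to M_1$ is $C^0$-close to $\mathrm{id}_{M_1}$ (each composite $\psi^{12}_\alpha\circ\psi^{21}_\alpha$ is $C^0$-$O(\varepsilon_1)$-close to the identity, and the center-of-mass averaging is Lipschitz in its point-data), hence $G\circ F$ is a local diffeomorphism that is uniformly homotopic to the identity and displaces points by less than the convexity radius $\mathrm{Conv}(M_1,F_1)$, which forces it to be bijective — e.g.\ because a fixed-point/degree argument or the standard ``close self-map of compact manifold with small displacement is a diffeomorphism'' lemma applies once the displacement is below the injectivity radius $\varsigma$. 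Then $F$ is injective and, being a local diffeomorphism from a compact manifold, surjective, so it is the desired diffeomorphism. \textbf{The main obstacle} I anticipate is the differential computation of $F$ in Step on the local-diffeomorphism claim: one must honestly track how the center-of-mass map depends on \emph{both} the moving weights $\rho_{\alpha_j}(x)$ and the moving target points $\psi^{21}_{\alpha_j}(x)$, bound the $V_{*}^{-1}$ factor using the nondegeneracy margin from Theorem \ref{centermass}, and then check that every error term is subsumed by the specific numerical inequality in Condition ($\Delta$)(3) — the bookkeeping of the powers of $\Lambda$ and the $\mathfrak s_k$-factors is where the proof is genuinely delicate, and it is presumably why the $\mathcal C_i$'s and the constant $\mathfrak r$ are defined with the exact exponents they have.
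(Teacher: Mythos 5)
Your proposal follows essentially the same approach as the paper's proof: you construct the comparison map via the center-of-mass construction (Theorem \ref{centermass}) applied to the weighted point-data $\{\rho_{\alpha_j}(x),\mathscr F_{\alpha_j}(x)\}$, differentiate the implicit characterization $V(F(x))=0$ to split $dF$ into a weight-derivative error (the paper's term $\mathrm I$ in Step 3) and a main map-derivative term (the paper's term $\mathrm{II}$, Steps 4--5), invoke the parallel-transport estimate of Lemma \ref{parallelesi} and the Jacobi estimates to bound $\mathrm{II}$ from below, and finally read off nondegeneracy from Condition $(\Delta)(3)$ before upgrading the local diffeomorphism to a global one via the small-displacement/homotopy argument. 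The only cosmetic difference is in the last step: the paper makes the surjectivity explicit by pulling back $F_2$ so that $\mathscr F$ becomes a covering projection (\cite[Thm.\ 9.2.1]{BCS}) and then uses the homotopy to the identity, whereas you jump directly to the standard ``small-displacement local diffeomorphism of a compact manifold is a diffeomorphism'' lemma --- but these are the same argument in substance.
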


By Lemma \ref{Peter}, we now show the following theorem.
\begin{theorem}Given $n\in\mathbb{N}$, $\Lambda\geq 1$, $k\geq 0$ and $V,D>0$, there exist only finitely many diffeomorphism classes of compact Berwald
$n$-manifolds $(M,F)$ satisfying
\[
\Lambda_F\leq \Lambda,\ |\mathbf{K}_M|\leq k,\  \mu(M)\geq V,\ \diam(M)\leq D.\tag{5.2}\label{5.2**}
\]
where $\mu(M)$ is either the Busemann-Hausdorff volume or the Holmes-Thompson volume of $M$.
\end{theorem}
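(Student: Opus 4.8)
The plan is to reduce the theorem to a compactness argument over the finitely many combinatorial and metric "blueprints" of such manifolds, and then invoke Lemma \ref{Peter}. Fix the constants $\varsigma$ coming from Theorem \ref{injectiveestimeat} (applied with $\tau=0$ since Berwald manifolds have $\mathbf{T}_M=0$), so that every manifold $(M,F)$ satisfying \eqref{5.2**} has $\mathfrak{i}_M\geq\varsigma=\varsigma(n,k,\Lambda,V,D)>0$. With $\varsigma$ fixed, choose $R>0$ and then $\varepsilon_1>0$, $\varepsilon_2>0$ so that the triple $(R,\varepsilon_1,\varepsilon_2)$ satisfies Condition ($\Delta$); this is possible because $\mathcal{C}_3(n,k,\Lambda,R,\varepsilon_2)\to 0$ as $R,\varepsilon_2\to 0$ and the last term in (3) also tends to $0$ as $\varepsilon_1\to 0$, while $(1-kR^2)/\Lambda^5\to\Lambda^{-5}>0$. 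All of $\mathfrak{r},\mathfrak{C},R,\varepsilon_1,\varepsilon_2$ are now determined by $n,k,\Lambda,V,D$ alone.

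Next I would show there is an $N=N(n,k,\Lambda,V,D)$ such that every $(M,F)$ satisfying \eqref{5.2**} satisfies Condition (1-$N$). For this, take a maximal set of points $\{p_1,\dots,p_N\}$ in $M$ that are $R/(4\Lambda^2)$-separated (in forward distance). Maximality forces the balls $B^+_{p_\alpha}(R/(2\Lambda^{3/2}))$ to cover $M$; the balls $B^+_{p_\alpha}(R/(4\Lambda^2))$ are disjoint by construction, and each such ball is strongly convex once $R/(4\Lambda^2)$ is below the convexity radius bound of Theorem \ref{Berwaldinjecitve} (which we may arrange by shrinking $R$ further, since $\mathrm{Conv}(M,F)\geq\min\{\pi/(2\sqrt k),\varsigma/(\lambda(1+\lambda))\}\geq\min\{\pi/(2\sqrt k),\varsigma/(\sqrt\Lambda(1+\sqrt\Lambda))\}$). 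To bound $N$: the disjoint balls $B^+_{p_\alpha}(R/(4\Lambda^2))$ each have volume bounded below by a Bishop–Gromov/volume-comparison estimate in terms of $n,k,\Lambda,R$ (the relative volume comparison for Finsler balls, using $|\mathbf{K}_M|\le k$ and $\Lambda_F\le\Lambda$, of the type recorded in the constants $\mathcal{C}_0,\mathcal{C}_1$), while their union has volume $\le\mu(M)$; but $\mu(M)$ is in turn bounded above by a volume-comparison estimate in terms of $n,k,\Lambda,D$. Dividing gives $N\le N(n,k,\Lambda,V,D)$. (Here one uses that the two natural volumes $\mu$ are comparable up to a constant depending on $n$ and $\Lambda$, so the choice of Busemann–Hausdorff versus Holmes–Thompson is immaterial.)

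Now I would attach to each such $(M,F)$ its "data": the symmetric incidence pattern $I\subset\{1,\dots,N\}^2$ recording which $\phi_\alpha(\overline{\mathcal B_0(R)})$ meet which $\phi_\beta(\overline{\mathcal B_0(R)})$, the family of transition maps $\{\mathfrak f_{\beta\alpha}\}_{(\alpha,\beta)\in I}$, and the family of linear maps $\{\mathfrak g_{\beta\alpha}\}_{(\alpha,\beta)\in I}$. By Lemmas \ref{C_1} and \ref{C_2}, each $\mathfrak f_{\beta\alpha}$ lies in the totally bounded set $H_1$ and each $\mathfrak g_{\beta\alpha}$ lies in the totally bounded set $H_2$ of Lemma \ref{totallboun}. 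There are only finitely many possible incidence patterns $I$ (at most $2^{N^2}$), and for each fixed $I$, the (finitely many, $\le N^2$) maps $\mathfrak f_{\beta\alpha}$ can be assigned to finitely many $\varepsilon_1/2$-balls covering $H_1$ and the maps $\mathfrak g_{\beta\alpha}$ to finitely many $\varepsilon_2/2$-balls covering $H_2$. Hence the set of all possible data splits into finitely many classes, two manifolds in the same class having the same $N$, the same $I$, and $\|\mathfrak f^1_{\beta\alpha}-\mathfrak f^2_{\beta\alpha}\|_{C^1}\le\varepsilon_1$, $\|\mathfrak g^1_{\beta\alpha}-\mathfrak g^2_{\beta\alpha}\|_0\le\varepsilon_2$ for all $(\alpha,\beta)\in I$. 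Two such manifolds then satisfy Condition (2-$N$), so Lemma \ref{Peter} gives $M_1\cong M_2$ (diffeomorphic). Therefore there are at most finitely many diffeomorphism classes, proving the theorem.

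The main obstacle I anticipate is the bookkeeping in the two volume comparisons that bound $N$: one needs a genuine Finslerian Bishop-type lower volume bound for small convex balls and a Günther/Bishop-type upper volume bound for the whole manifold, both uniform in the class, and one must be careful that the constants $\mathcal C_0,\mathcal C_1,\mathcal C_2$ in Condition ($\Delta$) are not vacuous — i.e. that one really can pick $R$ below all of them while still satisfying inequality (3); this requires checking the indicated limiting behavior of $\mathcal C_3$ and of the $\varepsilon_1$-term as the parameters shrink. The remaining steps (separated-net covering, totally-bounded pigeonhole, appeal to Lemma \ref{Peter}) are routine once the constants are in place.
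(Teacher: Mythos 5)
Your proposal is correct and takes essentially the same approach as the paper: use Theorem \ref{injectiveestimeat} (with $\tau=0$) to obtain a uniform injectivity-radius lower bound $\varsigma$, fix $(R,\varepsilon_1,\varepsilon_2)$ satisfying Condition ($\Delta$), bound the number of disjoint small forward balls by volume comparison, and combine the total-boundedness Lemma \ref{totallboun} with the Peter-type Lemma \ref{Peter}; the paper argues by contradiction and extracts subsequences, while you present the same argument as a direct pigeonhole over finitely many data classes, which is a logically equivalent reformulation. One small imprecision to flag: the volume comparison gives only $N\le N_0$, not one common $N$ for all manifolds in the class, so your ``data'' must also record the value of $N$ (finitely many possibilities) --- which your later remark that manifolds in the same class ``have the same $N$'' implicitly does, but which is in tension with your earlier claim of a single uniform $N$.
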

\begin{proof}
Theorem \ref{injectiveestimeat} yields a positive constant $\varsigma=\varsigma(n,\Lambda,k,V,D)$ such that if a compact Berwald $n$-Finsler manifold $(M,F)$ satisfies (\ref{5.2**}), then $\mathfrak{i}_M\geq \varsigma$. Let $(R,\varepsilon_1,\varepsilon_2)$ be a triple defined as in Definition \ref{condition 11}, i.e., $(R,\varepsilon_1,\varepsilon_2)$ satisfies Condition ($\Delta$).

Suppose the theorem is not true. Then there exists a infinite sequence $\{(M_s,F_s)\}$ satisfying (\ref{5.2**}),  but $\{(M_s,F_s)\}$ are not diffeomorphic mutually.

For each $s$, let $\{B^+_{p^s_\alpha}(R/(4\Lambda^2))\}_{\alpha=1}^{N_s}$ denote the maximal family of disjoint balls of radius $R/(4\Lambda^2)$  in $M_s$.
The volume comparison theorem \cite{ZY} then implies
\[
N_s\leq \frac{\mu(M_s)}{\min_{\alpha}\mu(B^+_{p^s_\alpha}(R/(4\Lambda^2)))}\leq C_0(n,\Lambda,D,R,k)=:N_0.
\]
It is not hard to check that $\{B^+_{p^s_\alpha}(R/(2{\Lambda^\frac32}))\}_{\alpha=1}^{N_s}$ cover $M_s$.
Since $\{(M_s,F_s)\}$ is a infinite sequence and $N_0$ is a finite number, there must be a subsequence $\{(M_{s_L},F_{s_L})\}$ such that all $N_{s_L}\equiv N_1\leq N_0$. That is, for all $L$, the number of the maximal family of disjoint balls of radius $R/(4\Lambda^2)$ in $M_{s_L}$ are the same. In particular, for each $L$, $M_{s_L}$ can be covered by $N_1$ balls of radius $R/(2{\Lambda^\frac32})$. Hence, all the elements in $\{(M_{s_L},F_{s_L})\}$ satisfy Condition (1-$N_1$).

 Since $N_1$ is finite, there must be a subsequence $\{(M_K,F_K)\}$ of $\{(M_{s_L},F_{s_L})\}$ such that for any $(M_{K_1}, F_{K_1}),(M_{K_2}, F_{K_2})\in \{(M_K,F_K)\}$,
\[
\phi^{K_1}_\alpha(\overline{\mathcal {B}_0(R)})\cap \phi^{K_1}_\beta(\overline{\mathcal {B}_0(R)})\neq\emptyset\Leftrightarrow \phi^{K_2}_\alpha(\overline{\mathcal {B}_0(R)})\cap \phi^{K_2}_\beta(\overline{\mathcal {B}_0(R)})\neq\emptyset,
\]
for all $\alpha,\beta\in \{1,\ldots,N_1\}$. That is, all the elements in $\{(M_K,F_K)\}$ satisfies Condition (2-$N_1$).

Lemma \ref{totallboun} yields that $H_i$ can be covered by a finite number (say $A_i$) of $\varepsilon_i$-balls, $i=1,2$. Hence, for each $K$, $\mathfrak{f}^K_{\beta\alpha}=(\phi^K_\beta)^{-1}\circ \phi^K_\alpha\in H_1$ is in some a $\varepsilon_1$-ball. Since $N_1,A_1$ are finite and $\{(M_K,F_K)\}$ is a infinite sequence, there exists a subsequence $\{(M_{K'},F_{K'})\}$ such that for any $(M_{K'_1}, F_{K'_1}), (M_{K'_2}, F_{K'_2})\in \{(M_{K'},F_{K'})\}$, we have
\[
\|\mathfrak{f}^{K'_1}_{\beta\alpha}-\mathfrak{f}^{K'_2}_{\beta\alpha}\|_{C_1}\leq \varepsilon_1,
\]
for any $\alpha,\beta\in \{1,\ldots,N_1\}$ with $\phi^{K'_1}_\alpha(\overline{\mathcal {B}_0(R)})\cap \phi^{K'_1}_\beta(\overline{\mathcal {B}_0(R)})\neq\emptyset$.

Likewise, since $N_1,A_2$ are finite and $\{(M_{K'},F_{K'})\}$ is infinite sequence,
there exists a subsequence $\{(M_{K''},F_{K''})\}$ such that for any $(M_{K''_1}, F_{K''_1}), (M_{K''_2}, F_{K''_2})\in \{(M_{K''},F_{K''})\}$, we have
\begin{align*}
\|\mathfrak{f}^{K''_1}_{\beta\alpha}-\mathfrak{f}^{K''_2}_{\beta\alpha}\|_{C_1}\leq \varepsilon_1, \ \|\mathfrak{g}^{K''_1}_{\beta\alpha}-\mathfrak{g}^{K''_2}_{\beta\alpha}\|_0\leq \varepsilon_2,
\end{align*}
for any $\alpha,\beta\in \{1,\ldots,N_1\}$ with $\phi^{K''_1}_\alpha(\overline{\mathcal {B}_0(R)})\cap \phi^{K''_1}_\beta(\overline{\mathcal {B}_0(R)})\neq\emptyset$.

 Lemma \ref{Peter} then implies that  $\{(M_{K''},F_{K''})\}$  are diffeomorphic mutually, which contradicts the definition of $\{(M_s,F_s)\}$.
\end{proof}

\section{A generalized Peter's Lemma}
We now recall some notations used in Sect. 5:

$\|\cdot\|$  denotes a Euclidean norm on $\mathbb{R}^n$ and  $\|\cdot\|_i:=\sqrt{\tilde{g}_i(\cdot,\cdot)}$ denotes the average Riemannian norm induced by $F_i$. In particular, for each $\alpha$, $u^i_\alpha:(\mathbb{R},\|\cdot\|)\rightarrow (T_{p^i_\alpha}M_i,\|\cdot\|_i)$ is a natural isometry. Given $X\in TM_i-\{0\}$, $\|\cdot\|_X:=\sqrt{{g_i}_X(\cdot,\cdot)}$, where $g_i$ is the fundamental tensor induced by $F_i$.
\begin{lemma} Let $(R,\varepsilon_1,\varepsilon_2)$ be the triple satisfying Condition ($\Delta$)
and let $(M_i,F_i)$, $i=1,2$ be two compact Berwald manifolds satisfying Condition (2-$N$).
Suppose that  for any $\alpha,\beta\in \{1,\ldots,N\}$ with $\phi^i_\alpha(\overline{\mathcal {B}_0(R)})\cap \phi^i_\beta(\overline{\mathcal {B}_0(R)})\neq\emptyset$, we have
\begin{align*}
&\|\mathfrak{f}^1_{\beta\alpha}-\mathfrak{f}^2_{\beta\alpha}\|_{C_1}\leq \varepsilon_1,\\
&\|\mathfrak{g}^1_{\beta\alpha}-\mathfrak{g}^2_{\beta\alpha}\|_{0}\leq \varepsilon_2.
\end{align*}
Then $M_1$ and $M_2$ are diffeomorphic.
\end{lemma}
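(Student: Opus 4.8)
The strategy follows Peters' original argument in the Riemannian case, adapted to the Berwald setting via the average Riemannian metrics $\|\cdot\|_i$ and the parallel-transport data $\mathfrak{g}^i_{\beta\alpha}$. The plan is to build an explicit diffeomorphism $M_1 \to M_2$ by gluing together the local maps $\psi_\alpha := \phi^2_\alpha \circ (\phi^1_\alpha)^{-1}$ defined on the chart images $\phi^1_\alpha(\overline{\mathcal{B}_0(R)}) \subset M_1$, using a partition of unity and a center-of-mass averaging procedure supplied by Theorem \ref{centermass}. First I would fix a smooth partition of unity $\{\lambda_\alpha\}_{\alpha=1}^N$ on $M_1$ subordinate to the cover $\{\phi^1_\alpha(\mathcal{B}_0(R/\sqrt{\Lambda}))\}$ arising from Condition (1-$N$), together with uniform $C^1$-bounds on the $\lambda_\alpha$ coming from the geometry (these depend only on $n,k,\Lambda,\varsigma,R$). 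For each $x \in M_1$, one then considers the finitely many indices $\alpha$ with $\lambda_\alpha(x) \neq 0$; for those, $x$ lies in the overlap region, the transition data $\mathfrak{f}^i_{\beta\alpha}$ and $\mathfrak{g}^i_{\beta\alpha}$ are defined and satisfy the hypothesized closeness, and so the points $\psi_\alpha(x) \in M_2$ all lie within a forward ball $B^+_{q}(r)$ of radius $r < \mathfrak{r}$ for a common center $q$, provided $R$ is small enough relative to $\mathfrak{r}/(40\Lambda^4)$ as demanded by Condition ($\Delta$)(1).

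\textbf{Construction of the candidate map.} Define $\Phi : M_1 \to M_2$ by letting $\Phi(x)$ be the center of mass $\mathscr{C}(f_x)$ of the atomic mass distribution on $M_2$ that assigns mass $\lambda_\alpha(x)$ to the point $\psi_\alpha(x)$; Theorem \ref{centermass} guarantees this is well-defined and that the defining vector field $V_x$ has non-degenerate differential at $\Phi(x)$. Equivalently $\Phi(x)$ is the unique solution of $\sum_\alpha \lambda_\alpha(x)\,\exp_{\Phi(x)}^{-1}\psi_\alpha(x) = 0$. Smoothness of $\Phi$ follows from the implicit function theorem applied to this equation, using the non-degeneracy of $V_{*}$ from Theorem \ref{centermass} and the smoothness of the $\psi_\alpha$ and $\lambda_\alpha$. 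The heart of the matter is then to show $\Phi$ is a diffeomorphism. For this I would estimate the differential $d\Phi_x$ and show it is $C^0$-close to an isometry (in the average metrics), hence non-singular: differentiating the defining relation and using the Jacobi-field estimates of Lemma \ref{newJacobies} (which force $\exp^{-1}$ to be close to the linear identification), together with the hypothesized bounds $\|\mathfrak{f}^1_{\beta\alpha} - \mathfrak{f}^2_{\beta\alpha}\|_{C_1} \leq \varepsilon_1$ and $\|\mathfrak{g}^1_{\beta\alpha} - \mathfrak{g}^2_{\beta\alpha}\|_0 \leq \varepsilon_2$ and the $C^1$-bounds on the $\lambda_\alpha$, one obtains an inequality of the shape
\[
\|d\Phi_x(X) - X\|_* \;\le\; \Big[\,\mathcal{C}_3(n,k,\Lambda,R,\varepsilon_2) + \tfrac{2^{2n+6}\Lambda^{4n+6}}{\mathfrak{s}_k(\sqrt{\Lambda}R)}\,\varepsilon_1\,\Big]\,\|X\|_*
\]
after transporting everything back to a common Euclidean model via the $u^i_\alpha$. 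Condition ($\Delta$)(3) is exactly the statement that the right-hand coefficient is strictly less than $(1-kR^2)/\Lambda^5$, which in turn bounds below the distortion between $\|\cdot\|_*$ at the source and target points; hence $d\Phi_x$ is invertible for every $x$, so $\Phi$ is a local diffeomorphism.

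\textbf{From local to global, and the main obstacle.} Once $\Phi$ is an everywhere-nonsingular smooth map between closed manifolds of the same dimension it is a covering map; to conclude it is a diffeomorphism I would exhibit a two-sided inverse, most cleanly by running the symmetric construction to get $\Psi : M_2 \to M_1$ (using the inverse transition data, whose closeness is automatic since $\varepsilon_1,\varepsilon_2$ control both directions up to the uniform bounds of Lemmas \ref{C_1} and \ref{C_2}) and then showing $\Psi \circ \Phi$ is $C^0$-close to $\mathrm{id}_{M_1}$ — again via Jacobi-field estimates and Condition ($\Delta$) — hence homotopic to the identity, so $\Phi$ has degree $\pm 1$ and the covering is trivial. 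I expect the main obstacle to be the differential estimate: one must carefully track how the center-of-mass equation responds to perturbation, controlling simultaneously (i) the non-linearity of $\exp^{-1}$ on $B^+_{q}(r)$, quantified through Lemma \ref{newJacobies}, (ii) the failure of the $u^i_\alpha$ to be Finsler-isometries, which only gives the two-sided bound \eqref{u*} and is the source of the many powers of $\Lambda$ in $\mathcal{C}_3$, and (iii) the interaction of the $C^1$-error $\varepsilon_1$ in the $\mathfrak{f}$'s with the derivatives of the partition of unity, which is why the $\varepsilon_1$-term carries the factor $\mathfrak{s}_k(\sqrt{\Lambda}R)^{-1}$. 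Assembling these into the single clean inequality above, with constants matching Definition \ref{condition 11}, is the technical crux; everything else is soft topology.
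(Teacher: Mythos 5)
Your proposal follows essentially the same route as the paper: define local maps $\phi^2_\alpha\circ(\phi^1_\alpha)^{-1}$, glue them by a bump-function partition of unity via the center-of-mass of Theorem~\ref{centermass}, apply the implicit function theorem together with Condition ($\Delta$)(3) to force the differential to be nonsingular, and finish by showing the resulting local diffeomorphism is a covering map admitting a symmetric inverse $\mathscr{G}$ with $\mathscr{G}\circ\mathscr{F}$ homotopic to the identity. You correctly reverse-engineer the shape of the key differential estimate (the $\mathcal{C}_3$- and $\varepsilon_1$-terms matching Definition~\ref{condition 11}) but explicitly defer the technical core — the paper's Steps 3--5, where the derivative of $p\mapsto\exp_x^{-1}\mathscr{F}_\alpha(p)$ is compared via parallel transports, Lemma~\ref{parallelesi}, and the Jacobi-field bounds of Appendix~A — which is where nearly all the work lies.
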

\begin{proof}
\textbf{Step 1.}
For each $\alpha$, define a map $\mathscr{F}_\alpha:=\phi^2_\alpha\circ (\phi_\alpha^1)^{-1}:\phi_\alpha^1(\overline{\mathcal {B}_0(R)})\rightarrow \phi_\alpha^2(\overline{\mathcal {B}_0(R)})$.
Given $p\in \phi^1_\alpha(\overline{\mathcal {B}_0(R)})\cap \phi^1_\beta(\overline{\mathcal {B}_0(R)})$,  we now estimate $d(\mathscr{F}_\alpha(p),\mathscr{F}_\beta(p))$.
Since $(\phi^1_\alpha)^{-1}(p)\in \overline{\mathcal {B}_0(R)}$,
\[
\|\mathfrak{f}^1_{\beta\alpha}\circ(\phi^1_\alpha)^{-1}(p)-\mathfrak{f}^2_{\beta\alpha}\circ(\phi^1_\alpha)^{-1}(p)\|_{C_1}\leq \varepsilon_1.\tag{6.1}\label{5.2}
\]
Note that

\[
\mathfrak{f}^1_{\beta\alpha}\circ(\phi^1_\alpha)^{-1}(p)=(\phi^2_\beta)^{-1}\mathscr{F}_\beta(p),\
\mathfrak{f}^2_{\beta\alpha}\circ(\phi^1_\alpha)^{-1}(p)=(\phi^2_\beta)^{-1}\circ \mathscr{F}_\alpha(p).
\]
Hence, (\ref{5.2}) implies that
\begin{align*}
&F\left(\exp^{-1}_{p^2_{\beta}}(\mathscr{F}_\beta(p))-\exp^{-1}_{p^2_{\beta}}(\mathscr{F}_\alpha(p))\right)\\
\leq &\sqrt{\Lambda}\left\|u^{-1}_\beta\circ\exp^{-1}_{p^2_{\beta}}(\mathscr{F}_\beta(p))-u^{-1}_\beta\circ\exp^{-1}_{p^2_{\beta}}(\mathscr{F}_\alpha(p))\right\|\\
=&\sqrt{\Lambda}\left\|(\phi^2_\beta)^{-1}\mathscr{F}_\beta(p)-(\phi^2_\beta)^{-1}\circ \mathscr{F}_\alpha(p)\right\|\leq \sqrt{\Lambda}\,\varepsilon_1.\tag{6.2}\label{5.3}
\end{align*}
Clearly, $\mathscr{F}_\beta(p)\in \phi^2_\beta(\overline{\mathcal {B}_0(R)})$, that is, $d(p^2_{\beta}, \mathscr{F}_\beta(p))\leq\sqrt{\Lambda} R$.
Since $\phi^1_\alpha(\overline{\mathcal {B}_0(R)})\cap \phi^1_\beta(\overline{\mathcal {B}_0(R)})\neq\emptyset$, we have $
\phi^2_\alpha(\overline{\mathcal {B}_0(R)})\cap \phi^2_\beta(\overline{\mathcal {B}_0(R)})\neq\emptyset$ and $\mathscr{F}_\alpha(p)\in \phi^2_\beta(\overline{\mathcal {B}_0(3\Lambda^2R)})$.

Let $\gamma(t)$, $t\in [0,1]$ be a curve from $\mathscr{F}_\alpha(p)$ to $\mathscr{F}_\beta(p)$ with $\exp^{-1}_{p^2_\beta}(\gamma(t))$ is a straight line. Clearly, $\|\exp^{-1}_{p^2_\beta}(\gamma(t))\|_2\leq 3\Lambda^2R$,
which implies that $\gamma(t)\in \overline{B^+_{p^2_\beta}(3\Lambda^{5/2}R)}$ and
\[
\max_{t\in [0,1]}\frac{\mathfrak{s}_{-k}(d(p^2_\beta,\gamma(t)))}{d(p^2_\beta,\gamma(t))}\leq \frac{\mathfrak{s}_{-k}(3\Lambda^{5/2} R)}{3\Lambda^{5/2} R}.
\]
Now Lemma \ref{firstjab} and (\ref{5.3}) implies that
\[
d(\mathscr{F}_\alpha(p),\mathscr{F}_\beta(p))\leq \frac{\mathfrak{s}_{-k}(3\Lambda^{\frac52}R)}{3\Lambda R}\varepsilon_1<3{\Lambda}^\frac32\varepsilon_1.
\]

\noindent\textbf{Step 2.}
Let $\eta:\mathbb{R}^+\rightarrow [0,1]$ be a smooth function with $|\eta'|\leq 4$ and
\[
\eta(r)=\left \{
\begin{array}{lll}
&1,&0\leq r\leq \frac12,\\
&(0,1),&\frac12<r<1,\\
 &0, &r\geq 1.
\end{array}
\right.
\]
Given $p\in M_1$, set
\[
\eta_\alpha(p):=\eta\left(\frac{\|(\phi^1_\alpha)^{-1}(p)\|}{R} \right),\ \psi_\alpha(p):=\frac{\eta_\alpha(p)}{\sum_\alpha \eta_\alpha(p)}.
\]
Hence, $\eta_\alpha(p)>0$ (or $\psi_\alpha(p)>0$) if and only if $p\in \phi^1_{\alpha}(\mathcal {B}_0(R))$.

Given $p\in M_1$, we define a vector field on $M_2$ by
\[
V_p(x):=\sum_{\alpha=1}^N\psi_\alpha(p)\cdot \left(\exp^{-1}_{x} \mathscr{F}_\alpha(p)\right)=\sum_{\alpha\in N'_p}\psi_\alpha(p)\cdot \left(\exp^{-1}_{x} \mathscr{F}_\alpha(p)\right),
\]
where $N'_p:=\{\alpha:\psi_\alpha(p)\neq0\}=\{\alpha:p\in \phi^1_{\alpha}({\mathcal {B}_0(R)})\}$. Clearly, if $\alpha,\beta\in N'_p$, we have $\phi^1_{\alpha}(\overline{\mathcal {B}_0(R)})\cap \phi^1_{\beta}(\overline{\mathcal {B}_0(R)})\neq\emptyset$.
By Step 1, we have
\[
d(\mathscr{F}_\alpha(p),\mathscr{F}_\beta(p))<3{\Lambda^\frac32}\varepsilon_1,\ d(\mathscr{F}_\beta(p),\mathscr{F}_\alpha(p))<3\Lambda^\frac32\varepsilon_1.
\]
Hence, one can find a forward ball of radius $3{\Lambda^\frac32}\varepsilon_1$, say $B_2(3\Lambda^\frac32\varepsilon_1)$, such that
$\mathscr{F}_\alpha(p)\in B_2(3\Lambda^\frac32\varepsilon_1)$ for all $\alpha\in N'_p$.
Define a mass distribution $f_p:N'_p\rightarrow B_2(3\Lambda^\frac32\varepsilon_1)$ by $f_p(\alpha):=\mathscr{F}_\alpha(p)$. The measure $\mathfrak{m}_p$ on $N'_p$ is defined by $\mathfrak{m}_p(\alpha)=\psi_\alpha(p)$. Then
\[
V_p(x)=\int_{\alpha\in N'_p}\exp^{-1}_x(f_p(\alpha))\,d\mathfrak{m}_p(\alpha).\tag{6.3}\label{5.4}
\]
It follows from Theorem \ref{centermass} that there exists a unique $x_p\in B_2(3\Lambda^\frac32\varepsilon_1)$ such that $V_p(x_p)=0$. Now we define a map $\mathscr{F}:M_1\rightarrow M_2$ by $\mathscr{F}(p)=x_p$. It is easy to see that
$\mathscr{F}$ is well-defined, i.e.,
$x_p$ is independent of the choices of $B_2(3\Lambda^\frac32\varepsilon_1)$.

Set
\[
\mathcal {V}(p,x):=\left(\sum_{\alpha=1}^N \eta_\alpha(p)\right)\cdot V_p(x)=\sum_{\alpha=1}^N\eta_\alpha(p)\cdot \left(\exp^{-1}_{x} \mathscr{F}_\alpha(p)\right),
\]
Note that $\mathcal {V}(p,x)$ is $C^1$ (cf. Theorem \ref{centermass}).
Clearly, $\mathcal {V}(p,\mathscr{F}(p))=0$. The implicit function theorem then yields that
\[
[d\mathscr{F}]=-(D_2\mathcal {V})^{-1}\cdot D_1\mathcal {V},\tag{6.4}\label{5.5}
\]
where $D_i\mathcal {V}$ denotes the differential matrix of $\mathcal {V}$ respect to the $i$-th variable, i.e.,
\[
D_1\mathcal {V}:=\left(\frac{\partial \mathcal {V}}{\partial p}\right),\ D_2\mathcal {V}:=\left(\frac{\partial \mathcal {V}}{\partial x}\right).
\]
Note that
\[
D_2\mathcal {V}=\left(\sum_{\alpha=1}^N \eta_\alpha(p)\right)\cdot\left(\frac{\partial V_p}{\partial x} \right).
\]
Theorem \ref{centermass} then implies that $D_2\mathcal {V}$ is not singular at $x=\mathscr{F}(p)$. Hence, (\ref{5.5}) is well-defined.

We will show that $\mathscr{F}$ is an imbedding. (\ref{5.5}) implies that it is equivalent to show that $D_1\mathcal {V}|_{(p,\mathscr{F}(p))}$ is not singular.
Note that $\mathcal {V}(p,x)\in T_xM_2$ for fixed $x$. Let $\gamma(t)$, $t\in (-\epsilon,\epsilon)$ be a smooth curve with $\gamma(0)=p$ and $\dot{\gamma}(0)=X$. Thus,
\[
\left.\frac{d}{dt}\right|_{t=0}\mathcal {V}(\gamma(t),x)=\mathcal {D}_1\mathcal {V}|_{(p,x)}(X)\in T_{\mathcal {V}(p,x)}(T_{x}M_2)\cong T_{x}M_2.
\]
In the following, \textbf{we always set $x=\mathscr{F}(p)$}.
Clearly, $D_1\mathcal {V}|_{(p,x)}$ is not singular if and only if
\begin{align*}
0&\neq \left.\frac{d}{dt}\right|_{t=0}\mathcal {V}(\gamma(t),x)\\
&=\sum_{\alpha=1}^N\left[ \left.\frac{d\eta_\alpha(\gamma(t))}{dt}\right|_{t=0}\cdot Y_\alpha(p)+ \eta_\alpha(p)\cdot\left(\exp_x^{-1}\right)_{*\mathscr{F}_\alpha(p)}\left.\frac{d \mathscr{F}_\alpha(\gamma(t))}{dt}\right|_{t=0}\right]\\
&=\sum_{\alpha=1}^N\left[\langle X,d\eta_\alpha|_p\rangle\cdot Y_\alpha(p)+\eta_\alpha(p)\cdot\langle\langle X, dY_\alpha\rangle\rangle|_{p}\right],\tag{6.5}\label{5.6}
\end{align*}
where
\[
Y_\alpha(p):=\exp^{-1}_{x}\mathscr{F}_\alpha(p)\in T_xM_2,\ \langle\langle X, dY_\alpha\rangle\rangle|_{p}:=\left(\exp_x^{-1}\right)_{*\mathscr{F}_\alpha(p)}\left.\frac{d \mathscr{F}_\alpha(\gamma(t))}{dt}\right|_{t=0}.
\]
Now we show (\ref{5.6}).

\noindent \textbf{Step 3.} First, we now estimate
\[
\text{I}:=\sum_{\alpha=1}^N \langle X,d\eta_\alpha|_p\rangle\cdot Y_\alpha(p).
\]
Note that $d\eta_\alpha|_p\neq0$ if and only if
\[
\frac{R}{2}<\|(\phi^1_\alpha)^{-1}(p)\| <R\Leftrightarrow p\in \phi^1_\alpha(\mathcal {B}_0(R))-\phi^1_\alpha(\overline{\mathcal {B}_0(R/2)}).
\]
Thus,
\[
\text{I}=\sum_{\alpha\in N''_p} \langle X,d\eta_\alpha|_p\rangle\cdot Y_\alpha(p).
\]
where
\[
N''_p:=\left\{\alpha:\,p\in \phi^1_\alpha(\mathcal {B}_0(R))-\phi^1_\alpha(\overline{\mathcal {B}_0(R/2)})\right\}\subset N'_p.
\]
Recall that $\{B^+_{p^1_\alpha}(R/(4\Lambda))\}_{\alpha=1}^N$ are disjoint. Thus, we have
\[
\sharp N''_p\leq \frac{\mu (B^+_p(\Lambda R))}{\min_{\alpha\in N''_p}\mu(B^+_{p^1_\alpha}(R/(4\Lambda)))}\leq \Lambda^{2n}\frac{\int^{\Lambda R}_0\mathfrak{s}_{-k}^{n-1}(t)dt}{\int^{\frac{R}{4\Lambda}}_0\mathfrak{s}_{k}^{n-1}(t)dt}\leq  2^{2n+1}\Lambda^{4n}.\tag{6.6}\label{5.6''}
\]

For each $\alpha\in N''_p$, set $Z(t):=\exp^{-1}_{p^1_\alpha}(\gamma(t))\in T_{p^1_\alpha}M_1$.  Clearly, $F_1(Z(0))=d(p^1_\alpha,p)\in (\frac{R}{2\sqrt{\Lambda}},\sqrt{\Lambda}R)$ and $X=\left(\exp_{p^1_\alpha}\right)_{*d(p^1_\alpha,p)\nabla d(p^1_\alpha,p)}\dot{Z}(0)$. Hence, it follows from Lemma \ref{Jacbi} that
\[
F_1(\dot{Z}(0))\leq \Lambda\frac{d(p^1_\alpha,p)}{\mathfrak{s}_k(d(p^1_\alpha,p))}F_1(X)\leq \frac{\Lambda^{\frac32}R}{\mathfrak{s}_k(\sqrt{\Lambda}R)}F_1(X),
\]
which implies that
\begin{align*}
|\langle X,d\eta_\alpha|_p\rangle|
&\leq \frac{4}{R}\left|\left.\frac{d}{dt}\right|_{t=0}\left\|Z(t)\right\|_1\right|=\frac{4}{R}\left|\frac{\left.\frac{d}{dt}\right|_{t=0}\|Z(t)\|_1^2}{2\|Z(0)\|_1}\right|\\
&\leq \frac{4}{R}\frac{\|Z(0)\|_1\|\dot{Z}(0)\|_1}{\|Z(0)\|_1}\leq \frac{4\Lambda^2 }{\mathfrak{s}_k(\sqrt{\Lambda}R)}F_1(X)\tag{6.7}\label{5.6'}.
\end{align*}
Since $\alpha\in N''_p\subset N'_p$, Step 2 yields
\[
F_2(Y_\alpha(p))=d(x,\mathscr{F}_\alpha(p))<6\Lambda^2\varepsilon_1,
\]
which together with (\ref{5.6''}) and (\ref{5.6'}) implies that
\[
F_2(\text{I})= \sum_{\alpha\in N''_p} F_2(\langle X,d\eta_\alpha|_p\rangle\cdot Y_\alpha(p))\leq \frac{2^{2n+6}\Lambda^{4n+5}\varepsilon_1}{\mathfrak{s}_k(\sqrt{\Lambda}R)}F_1(X).
\]

\noindent \textbf{Step 4.} We now estimate
\[
\text{II}:=\sum_{\alpha=1}^N\eta_\alpha(p)\cdot\langle\langle X, dY_\alpha|_{p}\rangle\rangle=\sum_{\alpha\in N'_p}\eta_\alpha(p)\cdot\langle\langle X, dY_\alpha|_{p}\rangle\rangle.
\]
Given $\alpha\in N'_p$.
Since $(u^2_\alpha\circ (u^1_\alpha)^{-1})_*=u^2_\alpha\circ (u^1_\alpha)^{-1}$, for each $Z\in TM_1$, we have
\begin{align*}
F_2((u^2_\alpha\circ (u^1_\alpha)^{-1})_*Z)=F_2((u^2_\alpha\circ (u^1_\alpha)^{-1})Z)\geq \frac1{\sqrt{\Lambda}}\,\|(u^1_\alpha)^{-1}Z\|\geq \frac1{\Lambda} \,F_1(Z).\tag{6.8}\label{5.10}
\end{align*}
Recall that
\[
Y_\alpha(p)=\exp^{-1}_x\mathscr{F}_\alpha(p)
=\exp^{-1}_x\circ\exp_{p^2_\alpha}\circ u^2_\alpha\circ (u^1_\alpha)^{-1}\circ\exp^{-1}_{p^1_\alpha}(p).
\]
Thus, (\ref{5.10}) together with Lemma A.1 yields that
\begin{align*}
&F_2(\langle\langle X, dY_\alpha|_{p}\rangle\rangle)=F_2\left(\exp^{-1}_{x*}\circ\exp_{p^2_\alpha*}\circ (u^2_\alpha\circ (u^1_\alpha)^{-1})_*\circ\exp^{-1}_{p^1_\alpha*}X \right)\\
\geq& \frac{d(x,\mathscr{F}_\alpha(p))}{\Lambda\mathfrak{s}_{-k}(d(x,\mathscr{F}_\alpha(p)))}F_2\left(\exp_{p^2_\alpha*}\circ (u^2_\alpha\circ (u^1_\alpha)^{-1})_*\circ\exp^{-1}_{p^1_\alpha*}X \right)\\
\geq &\frac{d(x,\mathscr{F}_\alpha(p))}{\Lambda^4\mathfrak{s}_{-k}(d(x,\mathscr{F}_\alpha(p)))}\frac{\mathfrak{s}_k(F_2(u^2_\alpha\circ (u^1_\alpha)^{-1}\circ\exp^{-1}_{p^1_\alpha}(p)))}{F_2(u^2_\alpha\circ (u^1_\alpha)^{-1}\circ\exp^{-1}_{p^1_\alpha}(p))}\frac{d(p^1_\alpha,p)}{\mathfrak{s}_{-k}(d(p^1_\alpha,p))}F_1(X)\\
\geq& \frac{1}{\Lambda^5}\frac{R}{\mathfrak{s}_{-k}(R)}\frac{\mathfrak{s}_k(\Lambda^{\frac32}R)}{\mathfrak{s}_{-k}(\sqrt{\Lambda}R)}F_1(X)
\geq \frac{1}{\Lambda^5}(1-k R^2)F_1(X).\tag{6.9}\label{5.12}
\end{align*}

Since $\{B^+_{p^1_\alpha}(R/(2{\Lambda^\frac32}))\}_{\alpha=1}^N$ is a covering, there exists $\beta$ such that $d(p^1_\beta,p)< R/(2\sqrt{\Lambda})$, which implies that $\|(\phi^1_\beta)^{-1}(p)\|<R/2$. Hence, $\beta\in N'_p$ and $\sum_{\alpha\in N'_p}\eta_\alpha(p)\geq\eta_\beta(p)= 1$. We claim that
\[
F_2\left(\langle\langle X, dY_\beta|_{p}\rangle\rangle\right)-\sup_{\alpha\in N'_p}F_2\left( \langle\langle X, dY_\beta|_{p}-dY_\alpha|_{p}\rangle\rangle \right)>0,\tag{6.10}\label{positv}
\]
which will be proved in Step 5. Here,
\[
F_2\left( \langle\langle X, dY_\beta|_{p}-dY_\alpha|_{p}\rangle \rangle \right):=F_2\left( \langle\langle X, dY_\beta|_{p}\rangle \rangle -\langle\langle X, dY_\alpha|_{p}\rangle \rangle \right).
\]

By (\ref{5.12}) and (\ref{positv}), we have
\begin{align*}
F_2(\text{II})&=F_2\left(\sum_{\alpha\in N'_p}\eta_\alpha(p)\cdot\langle\langle X, dY_\alpha|_{p}\rangle\rangle\right)\\
&\geq\sum_{\alpha\in N'_p}\eta_\alpha(p)\cdot F_2\left(\langle\langle X, dY_\beta|_{p}\rangle\rangle\right)-\sum_{\alpha\in N'_p}\eta_\alpha(p)\cdot F_2\left( \langle\langle X, dY_\beta|_{p}-dY_\alpha|_{p}\rangle\rangle \right)\\
&\geq \left(\sum_{\alpha\in N'_p}\eta_\alpha(p)\right)\left[F_2\left(\langle\langle X, dY_\beta|_{p}\rangle\rangle\right)-\sup_{\alpha\in N'_p}F_2\left( \langle\langle X, dY_\beta|_{p}-dY_\alpha|_{p}\rangle\rangle \right)\right]\\
&\geq F_2\left(\langle\langle X, dY_\beta|_{p}\rangle\rangle\right)-\sup_{\alpha\in N'_p}F_2\left( \langle\langle X, dY_\beta|_{p}-dY_\alpha|_{p}\rangle \rangle \right)\\
&\geq \frac{(1-k R^2)}{\Lambda^5}  F_1(X)-\sup_{\alpha\in N'_p}F_2\left( \langle\langle X, dY_\beta|_{p}-dY_\alpha|_{p}\rangle \rangle \right),\tag{6.11}\label{5.13}
\end{align*}
In the following steps, we will show (\ref{positv}) and estimate (\ref{5.13}).

\noindent \textbf{Step 5.} To estimate $F_2\left( \langle\langle X, dY_\beta|_{p}-dY_\alpha|_{p}\rangle \rangle \right)$ for $\alpha,\beta\in N'_p$, we just need to estimate the following three items
\begin{align*}
(1)\ \ F_2&\left(    \langle  \langle X, dY_\alpha|_p\rangle\rangle -P_{\mathscr{F}_\alpha(p),x} \circ P_{p^2_\alpha,\mathscr{F}_\alpha(p)}\circ u^2_\alpha  \circ (u^1_\alpha)^{-1}\circ P_{p,p^1_\alpha}X          \right);\tag{6.12}\label{5.14}\\
(2)\ \ F_2&\left(    P_{\mathscr{F}_\alpha(p),x} \circ P_{p^2_\alpha,\mathscr{F}_\alpha(p)}\circ u^2_\alpha  \circ (u^1_\alpha)^{-1}\circ P_{p,p^1_\alpha}X -  \langle\langle X, dY_\alpha|_p\rangle \rangle     \right);\tag{6.13}\label{5.14'}\\
(3)\ \ F_2&\left(P_{\mathscr{F}_\alpha(p),x} \circ P_{p^2_\alpha,\mathscr{F}_\alpha(p)}\circ u^2_\alpha  \circ (u^1_\alpha)^{-1}\circ P_{p,p^1_\alpha}X \right.\\
&\left.\ -P_{\mathscr{F}_\beta(p),x} \circ P_{p^2_\beta,\mathscr{F}_\beta(p)}\circ u^2_\beta  \circ (u^1_\beta)^{-1}\circ P_{p,p^1_\beta}X         \right).\tag{6.14}\label{5.15}
\end{align*}
Here, $P_{p,q}$ denotes the parallel transformation along the normal minimal geodesic from $p$ to $q$.

We first estimate (\ref{5.14}) and (\ref{5.14'}).
 Given $\alpha\in N'_p$, set $s_1:=d(p^1_\alpha,p)$ and $s_2:=d(p^2_\alpha,\mathscr{F}_\alpha(p))$. Clearly,
there exists $Y\in T_{p^1_\alpha}M_1$ such that
\[
\left(\exp_{p^1_\alpha}\right)_{*\rho^1_\alpha(p)\cdot\nabla \rho^1_\alpha(p)}Y=X,
\]
where $\rho^1_\alpha(\cdot):=d(p^1_\alpha,\cdot)$. Now let
\begin{align*}
\overline{X}&:=u^2_\alpha\circ (u^1_\alpha)^{-1}\circ P_{p,p^1_\alpha}(X)\in T_{p^2_\alpha}M_2,\ \overline{Y}:=u^2_\alpha\circ (u^1_\alpha)^{-1}(Y)\in T_{p^2_\alpha}M_2,\\
l&:=d(x,\mathscr{F}_\alpha(p)),\ J_Y(s_1):=\left(\exp_{p^1_\alpha}\right)_{*s_1\nabla \rho^1_\alpha(p)}(s_1Y)\in T_{p}M_1,\\
J_{\overline{Y}}(s_2)&:=\left(\exp_{p^2_\alpha}\right)_{*s_2\nabla \rho^2_\alpha(\mathscr{F}_\alpha(p))}(s_2\overline{Y})\in T_{\mathscr{F}_\alpha(p)}M_2.
\end{align*}
Note that there exists $Z\in T_{x}M_2$ with
\[
\left(\exp_{x}\right)_{*ly}Z=\frac{1}{s_2}J_{\overline{Y}}(s_2)=\left(\exp_{p^2_\alpha}\right)_{*s_2\nabla \rho^2_\alpha(\mathscr{F}_\alpha(p))}(\overline{Y}),
\]
where $y:=\nabla \rho_{x}({\mathscr{F}_\alpha(p)})$ and $\rho_{x}(\cdot):=d(x,\cdot)$.
Thus, we have
\begin{align*}
Z&=\left(\exp_{x}\right)_{*ly}^{-1}\left(\exp_{p^2_\alpha}\right)_{*s_2\nabla \rho^2_\alpha(\mathscr{F}_\alpha(p))}(\overline{Y})\\
&=\left(\exp_{x}\right)_{*ly}^{-1}\left(\exp_{p^2_\alpha}\right)_{*s_2\nabla \rho^2_\alpha(\mathscr{F}_\alpha(p))}u^2_\alpha\circ (u^1_\alpha)^{-1}\left(\exp_{p^1_\alpha}\right)_{*\rho^1_\alpha(p)\cdot\nabla \rho^1_\alpha(p)}^{-1}X\\
&=\langle\langle X,dY_\alpha|_p\rangle\rangle.
\end{align*}
Since $F_i$ is Berwalden,
\begin{align*}
(\ref{5.14})\leq&F_2\left(P_{\mathscr{F}_\alpha(p),x}\left(\frac{1}{s_2}J_{\overline{Y}}(s_2)\right)- P_{\mathscr{F}_\alpha(p),x} \circ P_{p^2_\alpha,\mathscr{F}_\alpha(p)}\overline{X} \right)\\
&+ F_2\left(Z-P_{\mathscr{F}_\alpha(p),x}\left(\frac{1}{s_2}J_{\overline{Y}}(s_2)\right) \right)\\
\leq &F_2\left(\frac{1}{s_2}J_{\overline{Y}}(s_2)-  P_{p^2_\alpha,\mathscr{F}_\alpha(p)}\overline{Y} \right)+F_2\left(P_{p^2_\alpha,\mathscr{F}_\alpha(p)}\overline{Y}-  P_{p^2_\alpha,\mathscr{F}_\alpha(p)}\overline{X} \right)\\
&+F_2\left(Z-P_{\mathscr{F}_\alpha(p),x}\left(\frac{1}{s_2}J_{\overline{Y}}(s_2)\right) \right)\\
\leq &\sqrt{\Lambda}\left\|\frac{1}{s_2}J_{\overline{Y}}(s_2)-  P_{p^2_\alpha,\mathscr{F}_\alpha(p)}\overline{Y} \right\|_{T_1}+F_2\left(\overline{Y} -\overline{X} \right)\\
&+\sqrt{\Lambda}\left\| Z-P^{-1}_{x,\mathscr{F}_\alpha(p)}\left(\frac{1}{s_2}J_{\overline{Y}}(s_2)\right)\right\|_{T_2}\tag{6.15}\label{5.16}
\end{align*}
where $T_1$ is the velocity of the normal geodesic from $p^2_\alpha$ to $\mathscr{F}_\alpha(p)$, and $T_2$ is the  velocity of the normal geodesic from $x$ to $\mathscr{F}_\alpha(p)$.

Since $F_i$ is Berwalden, $P^{-1}_{p,p^1_\alpha}=P_{p^1_\alpha,p}$ and $P^{-1}_{x,\mathscr{F}_\alpha(p)}= P_{\mathscr{F}_\alpha(p),x}$. And it is easy to see that $s_1\leq\sqrt{\Lambda}R$, $s_2\leq\sqrt{\Lambda}R$ and $l<R$. Thus,
Lemma A.1 together with Lemma \ref{jestima1} and Corollary \ref{jestima2} yields
\begin{align*}
&\left\|\frac{1}{s_2}J_{\overline{Y}}(s_2)-  P_{p^2_\alpha,\mathscr{F}_\alpha(p)}\overline{Y} \right\|_{T_1}
\leq\frac{\Lambda^{2}R}{\mathfrak{s}_k(\sqrt{\Lambda}R)}\left( \frac{\mathfrak{s}_{-k}(\sqrt{\Lambda}R)}{\sqrt{\Lambda}R}-1   \right)F_1(X),\tag{6.16}\label{5.20}\\
&F_2\left(\overline{Y} -\overline{X} \right)
\leq\frac{\Lambda^{2}R}{\mathfrak{s}_k(\sqrt{\Lambda}R)}\left( \frac{\mathfrak{s}_{-k}(\sqrt{\Lambda}R)}{\sqrt{\Lambda}R}-1 \right)F_1(X),\tag{6.17}\label{5.24}\\
&\left\| Z-P^{-1}_{x,\mathscr{F}_\alpha(p)}\left(\frac{1}{s_2}J_{\overline{Y}}(s_2)\right)\right\|_{T_2}\leq
\frac{\Lambda^2 R}{\mathfrak{s}_{k}(R)}\left( \frac{\mathfrak{s}_{-k}(R)}{R}-1   \right)\frac{\mathfrak{s}_{-k}(\sqrt{\Lambda} R)}{\mathfrak{s}_{k}(\sqrt{\Lambda}R)} F_1(X).\tag{6.18}\label{5.19}
\end{align*}
By (\ref{5.16}), (\ref{5.20}), (\ref{5.24}) and (\ref{5.19}), we have
\[
(\ref{5.14})
\leq  \frac{3\Lambda^3 R}{\mathfrak{s}_k(\sqrt{\Lambda}R)}\left( \frac{\mathfrak{s}_{-k}(\sqrt{\Lambda}R)}{\sqrt{\Lambda}R}-1 \right) \frac{\mathfrak{s}_{-k}(\sqrt{\Lambda}R)}{\mathfrak{s}_k(\sqrt{\Lambda}R)}\cdot F_1(X).\tag{6.19}\label{5.25}
\]
Similarly, one can show
\[
(\ref{5.14'})
\leq  \frac{3\Lambda^3 R}{\mathfrak{s}_k(\sqrt{\Lambda}R)}\left( \frac{\mathfrak{s}_{-k}(\sqrt{\Lambda}R)}{\sqrt{\Lambda}R}-1 \right) \frac{\mathfrak{s}_{-k}(\sqrt{\Lambda}R)}{\mathfrak{s}_k(\sqrt{\Lambda}R)}\cdot F_1(X).\tag{6.20}\label{5.25'}
\]

We now estimate (\ref{5.15}). Given $\alpha,\beta\in N'_p$, set
\begin{align*}
&X_\alpha:=P_{p,p^1_\alpha}X\in T_{p^1_\alpha}M_1,\ X_\beta:=P_{p,p^1_\beta}X\in T_{p^1_\beta}M_1,\ X'_\alpha:=P_{p^1_\alpha,p^1_\beta}X_\alpha\in T_{p^1_\beta}M_1,\\
&\overline{X_\beta}:=u^2_\beta\circ (u^1_\beta)^{-1}(X_\beta)\in T_{p^2_\beta}M_2,\ \overline{X_\alpha}:=u^2_\alpha\circ (u^1_\alpha)^{-1}(X_\alpha)\in T_{p^2_\alpha}M_2,\\
&\overline{X'_\alpha}:=P_{p^2_\beta,p^2_\alpha}\circ u^2_\beta\circ (u^1_\beta)^{-1}(X'_\alpha)\in T_{p^2_\alpha}M_2,\ \overline{X'_\beta}:=P_{p^2_\beta,p^2_\alpha}\circ u^2_\beta\circ (u^1_\beta)^{-1}(X_\beta)\in T_{p^2_\alpha}M_2.
\end{align*}

Thus, we have
\begin{align*}
(\ref{5.15})\leq &F_2\left( P_{\mathscr{F}_\alpha(p),x} \circ P_{p^2_\alpha,\mathscr{F}_\alpha(p)}   \circ \overline{X_\alpha} -   P_{\mathscr{F}_\alpha(p),x} \circ P_{p^2_\alpha,\mathscr{F}_\alpha(p)}   \circ  \overline{X'_\alpha}\right)\\
&+F_2\left(
 P_{\mathscr{F}_\alpha(p),x} \circ P_{p^2_\alpha,\mathscr{F}_\alpha(p)}   \circ  \overline{X'_\alpha}-
 P_{\mathscr{F}_\beta(p),x} \circ P_{p^2_\beta,\mathscr{F}_\beta(p)} \circ\overline{X_\beta}     \right)\\
 \leq & F_2\left(   \overline{X_\alpha} -   \overline{X'_\alpha}  \right)+F_2\left(
 P_{\mathscr{F}_\alpha(p),x} \circ P_{p^2_\alpha,\mathscr{F}_\alpha(p)}   \overline{X'_\alpha}- P_{\mathscr{F}_\alpha(p),x} \circ P_{p^2_\alpha,\mathscr{F}_\alpha(p)} \overline{X'_\beta} \right)\\
 &+F_2\left(P_{\mathscr{F}_\alpha(p),x} \circ P_{p^2_\alpha,\mathscr{F}_\alpha(p)} \overline{X'_\beta} -
 P_{\mathscr{F}_\beta(p),x} \circ P_{p^2_\beta,\mathscr{F}_\beta(p)} \overline{X_\beta}   \right)\\
 \leq&  F_2\left(   \overline{X_\alpha} -   \overline{X'_\alpha}  \right)+F_2\left( \overline{X'_\alpha}- \overline{X'_\beta}\right)\\
 &+F_2\left(P_{\mathscr{F}_\alpha(p),x} \circ P_{p^2_\alpha,\mathscr{F}_\alpha(p)} \overline{X'_\beta} -
 P_{\mathscr{F}_\beta(p),x} \circ P_{p^2_\beta,\mathscr{F}_\beta(p)} \overline{X_\beta}   \right)\tag{6.21}\label{5.26}
\end{align*}

Firstly, we have
\begin{align*}
&F(\overline{X_\alpha}-\overline{X_\alpha'})=F\left(P^{-1}_{p^2_\beta,p^2_\alpha}\circ u^2_\alpha\circ (u^1_\alpha)^{-1} (X_\alpha)-u^2_\beta\circ (u^1_\beta)^{-1}   (X'_\alpha)\right)\\
\leq &\sqrt{\Lambda}\left\|(u^2_\beta)^{-1}\circ P_{p^2_\alpha,p^2_\beta} \circ u^2_\alpha\circ (u^1_\alpha)^{-1} (X_\alpha)- (u^1_\beta)^{-1}   (X'_\alpha)   \right\|\\
= & \sqrt{\Lambda}\left\|\mathfrak{g}^2_{\beta\alpha}((u^1_\alpha)^{-1}(X_\alpha))-\mathfrak{g}^1_{\beta\alpha}((u^1_\alpha)^{-1}(X_\alpha))\right\|\leq \Lambda\cdot \varepsilon_2\cdot F_1(X).\tag{6.22}\label{5.27}
\end{align*}
Secondly, Lemma \ref{parallelesi} yields
\begin{align*}
F_2(\overline{X'_\alpha}- \overline{X'_\beta})\leq \Lambda\cdot F_1(X'_\alpha-X_\beta)
\leq   \mathfrak{C}(n,k,\Lambda)\cdot\Lambda^3\cdot F_1(X)\cdot R^2,\tag{6.23}\label{5.28}
\end{align*}
where $\mathfrak{C}(n,k,\Lambda)$ is the constant as in Lemma \ref{parallelesi}. Since $\alpha,\beta\in N'_p$, $\phi^2_\alpha(\overline{\mathcal {B}_0(R)})\cap \phi^2_\beta(\overline{\mathcal {B}_0(R)})\neq\emptyset$, which implies that $d(p^2_\alpha,p^2_\beta)<2\Lambda R$.
By Lemma \ref{parallelesi} again, we have
\begin{align*}
&F_2\left(P_{\mathscr{F}_\alpha(p),x} \circ P_{p^2_\alpha,\mathscr{F}_\alpha(p)} \overline{X'_\beta} -
 P_{\mathscr{F}_\beta(p),x} \circ P_{p^2_\beta,\mathscr{F}_\beta(p)} \overline{X_\beta}   \right)\\
 \leq & F_2\left(P_{\mathscr{F}_\alpha(p),x} \circ P_{p^2_\alpha,\mathscr{F}_\alpha(p)} \circ P_{p^2_\beta,p^2_\alpha} \overline{X_\beta}-
  P_{p^2_\beta,x} \overline{X_\beta}   \right)\\
  &+F_2\left(P_{p^2_\beta,x} \overline{X_\beta}  -
 P_{\mathscr{F}_\beta(p),x} \circ P_{p^2_\beta,\mathscr{F}_\beta(p)} \overline{X_\beta}     \right)\\
 \leq &F_2\left(P_{\mathscr{F}_\alpha(p),x} \circ P_{p^2_\alpha,\mathscr{F}_\alpha(p)} \circ P_{p^2_\beta,p^2_\alpha} \overline{X_\beta}-
  P_{p^2_\beta,x} \overline{X_\beta}   \right)+4\mathfrak{C}(n,k,\Lambda)\cdot\Lambda^{\frac52}\cdot F_1(X)\cdot R^2\\
  \leq &F_2\left(P_{\mathscr{F}_\alpha(p),x} \circ P_{p^2_\alpha,\mathscr{F}_\alpha(p)} \circ P_{p^2_\beta,p^2_\alpha} \overline{X_\beta}-
 P_{\mathscr{F}_\alpha(p),x} \circ P_{p^2_\beta,\mathscr{F}_\alpha(p)} \overline{X_\beta} \right)\\
  &+F_2\left(P_{\mathscr{F}_\alpha(p),x} \circ P_{p^2_\beta,\mathscr{F}_\alpha(p)} \overline{X_\beta}-
  P_{p^2_\beta,x} \overline{X_\beta}   \right)+4\mathfrak{C}(n,k,\Lambda)\cdot\Lambda^{\frac52}\cdot F_1(X)\cdot R^2\\
  \leq &29\Lambda^3\cdot \mathfrak{C}(n,k,\Lambda)\cdot R^2\cdot F_1(X).\tag{6.24}\label{5.29}
\end{align*}
Now by (\ref{5.26}),  (\ref{5.27}),  (\ref{5.28}) and  (\ref{5.29}), we obtain
\begin{align*}
(\ref{5.15})\leq  \left[ 30 \Lambda^3 \mathfrak{C}(n,k,\Lambda) R^2+\Lambda \varepsilon_2\right]   F_1(X).\tag{6.25}\label{5.30}
\end{align*}

The triangle inequality then yields
\begin{align*}
& F_2(\langle\langle X,dY_\beta|_p-dY_\alpha|_p   \rangle \rangle) \leq (\ref{5.14})_\beta+(\ref{5.15})_{\beta\alpha}+(\ref{5.14'})_\alpha\\
\leq& (\ref{5.25})_\beta+(\ref{5.30})_{\beta\alpha}+(\ref{5.25'})_{\alpha}=\mathcal {C}_3(n,k,\Lambda,R,\varepsilon_2)F_1(X),
\end{align*}
which together with (\ref{5.12}) and
(\ref{5.13}) yields (\ref{positv}) and
\begin{align*}
F_2(\text{II})\geq  \left[ \frac{(1-kR^2)}{\Lambda^5}-\mathcal {C}_3(n,k,\Lambda,R,\varepsilon_2)\right]\cdot F_1(X).\tag{6.26}\label{5.32}
\end{align*}

Step 3 furnishes that
\begin{align*}
F_2(-\,\text{I})\leq \sqrt{\Lambda}\cdot F_2(\text{I})\leq \frac{2^{2n+6}\Lambda^{4n+5+\frac12}}{\mathfrak{s}_k(\sqrt{\Lambda}R)}\varepsilon_1 F_1(X).\tag{6.27}\label{5.33}
\end{align*}
Thus, (\ref{5.6}) together with (\ref{5.32}) and (\ref{5.33}) yields
\begin{align*}
&F_2\left(\left.\frac{d}{dt}\right|_{t=0}\mathcal {V}(\gamma(t),x) \right)=F_2(\text{I}+\text{II})\geq F_2(\text{II})-F_2(-\text{I})\\
\geq &\left[ \frac{(1-kR^2)}{\Lambda^5}-\mathcal {C}_{3}(n,k,\Lambda,R,\varepsilon_2)-\frac{2^{2n+6}\Lambda^{4n+6}}{\mathfrak{s}_k(\sqrt{\Lambda}R)}\varepsilon_1 \right]\cdot F_1(X)>0,
\end{align*}
which implies that $\mathscr{F}_*$ is nonsingular (See Step 2).

\noindent{\textbf{Step 6.}} Since $\mathscr{F}$ is a local diffeomorphism, we can define a new Finsler metric $\widetilde{F}_1$ on $M_1$ by $\widetilde{F}_1:= \mathscr{F}^*F_2$. Thus, $(M_1,\widetilde{F}_1)$ is a forward geodesically complete Finsler manifold, since $M_1$ is closed. It follows from \cite[Theorem 9.2.1]{BCS} that $\mathscr{F}:M_1\rightarrow M_2$ is a covering projection.

Let $\mathscr{G}:M_2\rightarrow M_1$ be the map constructed as $\mathscr{F}$. Given any point $p\in M_1$, there exists a point $p^1_\alpha\in M_1$ such that $d(p^1_\alpha,p)<R/(2\Lambda^\frac32)$, which implies
\begin{align*}
d(\mathscr{F}_\alpha(p^1_\alpha),\mathscr{F}_\alpha(p))=F_2(u^2_\alpha\circ (\phi^1_\alpha)^{-1}(p))\leq \Lambda d(p^1_\alpha,p)<R/(2\sqrt{\Lambda}).\tag{6.28}\label{6.28}
\end{align*}
Since $\alpha\in N'_p$, $d(\mathscr{F}_\alpha(p),\mathscr{F}(p))<R/(2\sqrt{\Lambda})$, which together with (\ref{6.28}) yields
\[
d(p^2_\alpha,\mathscr{F}(p))=d(\mathscr{F}_\alpha(p^1_\alpha),\mathscr{F}(p))<R/\sqrt{\Lambda},
\]
that is, $\mathscr{F}(p)\in \phi^2_\alpha(\overline{\mathcal {B}_0(R)})$. Set $\mathscr{G}_\alpha:=\phi^1_\alpha\circ(\phi^2_\alpha)^{-1}:\phi^2_\alpha(\overline{\mathcal {B}_0(R)})\rightarrow \phi^1_\alpha(\overline{\mathcal {B}_0(R)})$. The same argument as before yields that
\begin{align*}
d(p^1_\alpha,\mathscr{G}\circ \mathscr{F}(p))\leq d(\mathscr{G}_\alpha(p^2_\alpha),\mathscr{G}_\alpha(\mathscr{F}(p)))+d(\mathscr{G}_\alpha(\mathscr{F}(p)),\mathscr{G}(\mathscr{F}(p)))<2\sqrt{\Lambda}R,
\end{align*}
and therefore, $\mathscr{G}\circ \mathscr{F}(p)\in B^+_p(3\sqrt{\Lambda}R)$. Likewise, one can show $\mathscr{F}\circ\mathscr{G}(q)\in B^+_q(3\sqrt{\Lambda}R)$. That is, both $\mathscr{G}\circ \mathscr{F}$ and $\mathscr{F}\circ\mathscr{G}$ map every point
to a convex neighborhood of itself and hence, they are homotopic to the
identity. Now we conclude that $\mathscr{F}$ and $\mathscr{G}$ are diffeomorphisms.
\end{proof}

\appendix
\section{Some estimates for Jacobi fields}
In this section, we always assume that $(M,F)$ be a compact Finsler $n$-manifold with
$\Lambda_F\leq \Lambda$ and $|\mathbf{K}_M|\leq k$. Given $y\in SM$, we use $\gamma_y(t)$ to denote the normal geodesic with $\dot{\gamma_y}(0)=y$.

\begin{lemma}\label{Jacbi}For any $y\in S_pM$ and $X\in T_pM-\{0\}$, we have
\[
\frac{\mathfrak{s}_k(t)}{t}\leq \frac{\|(\exp_p)_{*ty}X\|_T}{\|X\|_T}\leq \frac{\mathfrak{s}_{-k}(t)}{t},\ \ t\in \left[0,\frac{\pi}{2\sqrt{k}}\right],
\]
where $\|\cdot\|_T:=g_{T}(\cdot,\cdot)$ and $T:=\dot{\gamma}_y(t)$.
\end{lemma}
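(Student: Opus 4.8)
The plan is to reduce the two–sided bound to a comparison estimate for a single perpendicular Jacobi field along $\gamma_y$, and to obtain that estimate from the Riccati (Rauch–type) comparison applied to the shape operator of the geodesic spheres about $p$.

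First I would split $X=a\,y+W$ with $g_y(y,W)=0$, and set $\gamma:=\gamma_y$. Then $(\exp_p)_{*ty}(ay)=a\,\dot{\gamma}(t)=aT$, while $(\exp_p)_{*ty}(W)=\tfrac1t J(t)$, where $J$ is the Jacobi field along $\gamma$ with $J(0)=0$ and $D_{\dot{\gamma}}J(0)=W$. A short computation — the Finsler Gauss lemma — shows that $t\mapsto g_{\dot{\gamma}}(\dot{\gamma},J(t))$ is linear, with value $0$ and derivative $g_y(y,W)=0$ at $t=0$, hence vanishes identically; thus $aT$ and $\tfrac1t J(t)$ are $g_T$-orthogonal, so that, reading the denominator $\|X\|_T$ of the statement at the basepoint as $\|X\|_y^2=g_y(X,X)$,
\[
\left(\frac{\|(\exp_p)_{*ty}X\|_T}{\|X\|_T}\right)^{2}=\frac{a^{2}+t^{-2}\,\|J(t)\|_{\dot{\gamma}(t)}^{2}}{a^{2}+\|W\|_y^{2}}.
\]
Since $\mathfrak{s}_k(t)\le t\le\mathfrak{s}_{-k}(t)$ on $[0,\pi/(2\sqrt{k})]$, the ``radial'' ratio $a^{2}/a^{2}=1$ already lies in $\big[(\mathfrak{s}_k(t)/t)^{2},(\mathfrak{s}_{-k}(t)/t)^{2}\big]$, so by the mediant inequality it is enough to prove
\[
\mathfrak{s}_k(t)\,\|W\|_y\ \le\ \|J(t)\|_{\dot{\gamma}(t)}\ \le\ \mathfrak{s}_{-k}(t)\,\|W\|_y,\qquad t\in[0,\tfrac{\pi}{2\sqrt{k}}].
\]

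To prove this I would work along $\gamma$, where $D_{\dot{\gamma}}\dot{\gamma}=0$ makes the Cartan correction term vanish, so the Chern connection is metric compatible, the Jacobi equation $D_{\dot{\gamma}}^{2}J+R_{\dot{\gamma}}(J)=0$ holds with $R_{\dot{\gamma}}$ the $g_{\dot{\gamma}}$-self-adjoint flag-curvature operator, and $J\perp\dot{\gamma}$ for all $t$. Standard comparison (using $\mathbf{K}_M\le k$) shows that no conjugate point of $p$ along $\gamma$ occurs before $\pi/\sqrt{k}$, so on $[0,\pi/(2\sqrt{k})]$ the shape operator $U(t)$ — defined by $D_{\dot{\gamma}}\widetilde{J}=U(t)\widetilde{J}$ for every Jacobi field $\widetilde{J}$ with $\widetilde{J}(0)=0$ — is a well-defined $g_{\dot{\gamma}(t)}$-self-adjoint solution of $D_{\dot{\gamma}}U+U^{2}+R_{\dot{\gamma}}=0$ with $U(t)\sim\tfrac1t\,\mathrm{id}$ as $t\to0^{+}$. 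Then $\tfrac{d}{dt}\log\|J(t)\|_{\dot{\gamma}(t)}=g_{\dot{\gamma}}(UJ,J)/\|J\|^{2}$ lies between the smallest and largest eigenvalues of $U(t)$, and the Riccati comparison driven by the two-sided bound $|\mathbf{K}_M|\le k$ traps those eigenvalues between $\mathfrak{s}_k'(t)/\mathfrak{s}_k(t)$ and $\mathfrak{s}_{-k}'(t)/\mathfrak{s}_{-k}(t)$. Integrating from $0^{+}$ and using $\|J(t)\|_{\dot{\gamma}(t)}/t\to\|W\|_y$ gives the displayed two-sided bound; combined with the reduction above this proves the lemma for $t>0$, and $t=0$ follows by continuity since $(\exp_p)_{*0}=\mathrm{id}$.

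The linear-algebra reduction and the scalar integrations are routine. The real work is Finsler-specific bookkeeping: checking that along a geodesic the Chern connection is metric compatible and $R_{\dot{\gamma}}$ is self-adjoint, that $U(t)$ is therefore symmetric and solves the matrix Riccati equation, and — this is the crux — running the eigenvalue comparison for that equation (equivalently, the Finslerian Rauch comparison) with only the flag-curvature bound $|\mathbf{K}_M|\le k$, while keeping careful track of the reference-vector dependence of $g$ and of the curvature operator. I expect this last point to be the main obstacle. Note that the blanket hypothesis $\Lambda_F\le\Lambda$ of this appendix is not actually needed for the present estimate.
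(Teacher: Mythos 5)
Your reduction is the same as the paper's: split $X=\alpha y + X^\perp$ (your $a\,y+W$), observe $(\exp_p)_{*ty}(\alpha y)=\alpha T$ has exact $g_T$-norm $|\alpha|$, use the Finsler Gauss lemma to get $g_T$-orthogonality of the radial and transverse parts, bound the transverse Jacobi field $J^\perp$, and combine. Your ``mediant inequality'' step is exactly what the paper needs (but leaves implicit) to go from the two separate bounds to the bound on the full ratio, so you actually spell out a detail that the paper glosses over. The only genuine divergence is in how the transverse estimate is obtained: the paper simply invokes the Finslerian Rauch comparison theorem, \cite[Theorem 9.6.1]{BCS}, to get $\mathfrak{s}_k(t)\|X^\perp\|\le\|J^\perp(t)\|_T\le\mathfrak{s}_{-k}(t)\|X^\perp\|$ on $[0,\pi/(2\sqrt k)]$ from $|\mathbf{K}_M|\le k$, whereas you re-derive this estimate from scratch via the shape operator $U(t)$, the matrix Riccati equation $U'+U^2+R_{\dot\gamma}=0$, and an eigenvalue comparison.

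Both routes are correct. Citing Rauch is much shorter and is what the appendix does; your Riccati route essentially re-proves Rauch and therefore has to carry the Finsler-specific bookkeeping you flag (metric compatibility of the Chern connection along the base geodesic, $g_{\dot\gamma}$-self-adjointness of $R_{\dot\gamma}$, reference-vector consistency), all of which is standard but not free. You are also right that $\Lambda_F\le\Lambda$ plays no role in this lemma; it is a blanket hypothesis for the appendix, not used here. In short: correct proof, same decomposition, but you replace the paper's single citation with a longer first-principles derivation of the transverse estimate.
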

\begin{proof}Let
\[
J^\bot(t):=(\exp_p)_{*ty}\left(tX^\bot\right),\ J^\parallel(t):=(\exp_p)_{*ty}\left(\alpha ty\right),
\]
where $\alpha:=g_y(y,X)$ and $X^\bot:=X-\alpha y$. Since $|\mathbf{K}_M|\leq k$, it follows from the Rauch theorem \cite[Theorem 9.6.1]{BCS} that
\[
\mathfrak{s}_{k}(t)\|X^\bot\|_T\leq \|J^\bot(t)\|_T\leq \mathfrak{s}_{-k}(t)\|X^\bot\|_T, \ \ t\in \left[0,\frac{\pi}{2\sqrt{k}}\right].
\]
Note that $\|J^\parallel(t)\|_T=t|\alpha|$. Thus,
\[
\mathfrak{s}_{k}(t)\|\alpha y\|_T\leq \|J^\parallel(t)\|_T\leq \mathfrak{s}_{-k}(t)\|\alpha y\|_T, \ \ t\in \left[0,\frac{\pi}{2\sqrt{k}}\right].
\]
The lemma follows from the inequalities above.
\end{proof}

\begin{lemma}\label{firstjab}
Given three points $p,q,x\in M$. Let $\gamma(s)$, $s\in [0,1]$ be a smooth curve from $p$ to $q$ such that $d(x,\gamma(s))<\min\{\mathfrak{i}_M,\frac{\pi}{2\sqrt{k}}\}$ for all $s$.
Set $P:=\exp^{-1}_x(p)$ and $Q:=\exp^{-1}_x(q)$.

\bigskip
(1) Suppose that $\gamma(s)$ is a minimal geodesic from $p$ to $q$. Then
\[
\frac{1}{\Lambda}\min_{s\in [0,1]}\frac{\mathfrak{s}_{k}(d(x,\gamma(s))}{ d(x,\gamma(s))}F(Q-P)\leq d(p,q).
\]

\bigskip
(2) Suppose that $\exp_x^{-1}(\gamma(s))$ is a straight line from $P$ to $Q$. Then
\[
d(p,q)\leq \Lambda\max_{s\in [0,1]}\frac{\mathfrak{s}_{-k}(d(x,\gamma(s))}{ d(x,\gamma(s))}F(Q-P).
\]

\bigskip
(3) Suppose that $\gamma([0,1])\subset B^+_x(R)$, where $R<\min\{\mathfrak{i}_M,\frac{\pi}{2\sqrt{k}}\}$. Then
\[
\frac{\mathfrak{s}_{k}(R)}{\Lambda\cdot R}F(Q-P)\leq d(p,q)\leq \frac{\Lambda\cdot\mathfrak{s}_{-k}(R)}{ R}F(Q-P).
\]
\end{lemma}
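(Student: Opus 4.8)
The plan is to transport the problem into the radial exponential chart centred at $x$ and combine Lemma \ref{Jacbi} with the convexity of $F$ on each tangent space. Fix a curve $\gamma:[0,1]\to M$ from $p$ to $q$ with $d(x,\gamma(s))<\min\{\mathfrak{i}_M,\frac{\pi}{2\sqrt{k}}\}$ throughout. Since the image of $\gamma$ lies in the forward injectivity domain of $x$, the map $\exp_x$ is a diffeomorphism there, so $\sigma(s):=\exp^{-1}_x(\gamma(s))\in T_xM$ is a well-defined smooth curve with $\sigma(0)=P$, $\sigma(1)=Q$, and $F(\sigma(s))=d(x,\gamma(s))=:t(s)$ (radial geodesics being minimal inside the injectivity radius). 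Writing $\sigma(s)=t(s)y(s)$ with $y(s)\in S_xM$ — the case $t(s)=0$ being covered by continuity — one has $\dot{\gamma}(s)=(\exp_x)_{*t(s)y(s)}\dot{\sigma}(s)$.

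First I would apply Lemma \ref{Jacbi} (with base point $x$) to the isomorphism $(\exp_x)_{*t(s)y(s)}$ and convert every $g_\bullet$-norm produced there into $F$ by means of $\Lambda_F\le\Lambda$, each such conversion costing at most a factor $\sqrt{\Lambda}$. This yields the pointwise bound
\[
\frac{1}{\Lambda}\,\frac{\mathfrak{s}_k(t(s))}{t(s)}\,F(\dot{\sigma}(s))\ \le\ F(\dot{\gamma}(s))\ \le\ \Lambda\,\frac{\mathfrak{s}_{-k}(t(s))}{t(s)}\,F(\dot{\sigma}(s)),\qquad s\in[0,1].
\]
For (1), $\gamma$ being a minimal geodesic gives $d(p,q)=\int_0^1 F(\dot{\gamma}(s))\,ds$; integrating the left inequality and using that $F$ is convex on $T_xM$ (so that $\int_0^1 F(\dot{\sigma}(s))\,ds\ge F\bigl(\int_0^1\dot{\sigma}(s)\,ds\bigr)=F(Q-P)$, the statement that straight segments minimise length in a Minkowski space) yields
\[
d(p,q)\ \ge\ \frac{1}{\Lambda}\min_{s\in[0,1]}\frac{\mathfrak{s}_k(t(s))}{t(s)}\,F(Q-P).
\]
For (2), $\exp^{-1}_x\circ\gamma$ being the segment from $P$ to $Q$ gives $\dot{\sigma}(s)\equiv Q-P$, and integrating the right inequality gives $d(p,q)\le L(\gamma)\le\Lambda\max_{s}\frac{\mathfrak{s}_{-k}(t(s))}{t(s)}F(Q-P)$.

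Part (3) then follows at once: if the relevant curve lies in $B^+_x(R)$ then $t(s)<R$ for all $s$, and since $\mathfrak{s}_k(t)/t$ is non-increasing and $\mathfrak{s}_{-k}(t)/t$ is non-decreasing on $(0,\frac{\pi}{2\sqrt{k}}]$, the quantities $\min_s\mathfrak{s}_k(t(s))/t(s)$ and $\max_s\mathfrak{s}_{-k}(t(s))/t(s)$ are bounded below by $\mathfrak{s}_k(R)/R$ and above by $\mathfrak{s}_{-k}(R)/R$ respectively; applying (1) to a minimal geodesic joining $p$ and $q$ (which lies in $B^+_x(R)$ by hypothesis, so its endpoints do too) and (2) to the segment curve (automatically inside $B^+_x(R)$, as $F(\sigma(s))\le(1-s)F(P)+sF(Q)<R$) gives both inequalities. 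The one step requiring care is the second paragraph: keeping track of which tangent space each norm in Lemma \ref{Jacbi} lives in, of the powers of $\Lambda$ generated by the uniformity-constant conversions, and of the Gauss-type identity $F(\exp^{-1}_x z)=d(x,z)$ for $z$ inside the forward injectivity radius of $x$, which is what allows Lemma \ref{Jacbi} to be invoked with radius exactly $t(s)$.
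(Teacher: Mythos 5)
Your proof is correct and follows essentially the same route as the paper's: identify $\sigma(s)=\exp_x^{-1}(\gamma(s))$ (the paper calls this $V_s$), invoke Lemma \ref{Jacbi} on the radial differential $(\exp_x)_{*\sigma(s)}$, convert the $g$-norms to $F$ at a cost of $\sqrt{\Lambda}$ on each side, and integrate, using the triangle inequality in the Minkowski space $(T_xM,F)$ to get $F(Q-P)\le\int_0^1 F(\dot\sigma)\,ds$ for part (1), and taking $\dot\sigma\equiv Q-P$ together with $L(\gamma)\ge d(p,q)$ for part (2). The only stylistic difference is that you state the Jacobi bound pointwise and then integrate, whereas the paper presents the inequality already integrated; these are the same estimate. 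Your filled-in justification for (3) (monotonicity of $\mathfrak{s}_{\pm k}(t)/t$ and the remark that the Euclidean segment's preimage stays in $\mathcal B^+_x(R)$ by convexity of $F$) is a correct elaboration of what the paper dismisses with ``Clearly, (3) follows from (1) and (2).''
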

\begin{proof}
For each $s\in [0,1]$, there exists $V_s\in T_xM$ such that $\exp_xV_s=\gamma(s)$. We define a geodesic variation
\[
\sigma(t,s):=\exp_p(tV_s), \ (t,s)\in [0,1]\times[0,1].
\]
Set
\[
T:=\frac{\partial\sigma}{\partial t}=(\exp_x)_{*tV_s}V_s,\ U:=\frac{\partial\sigma}{\partial s}=(\exp_x)_{*tV_s}(t\dot{V}_s),
\]
where $\dot{V}_s:=\frac{dV_s}{ds}$.
It follows from Lemma \ref{Jacbi} that
\[
\frac{1}{\Lambda}\min_{s\in [0,1]}\frac{\mathfrak{s}_{k}(d(x,\gamma(s))}{ d(x,\gamma(s))}\int^1_0F(\dot{V}_s)ds\leq \int^1_0F(U(1,s))ds\leq \Lambda\max_{s\in [0,1]}\frac{\mathfrak{s}_{-k}(d(x,\gamma(s))}{ d(x,\gamma(s))}\int^1_0F(\dot{V}_s)ds.
\]

(1) Suppose that $\gamma(s)$ is a minimal geodesic from $p$ to $q$. Note that $U(1,s)=\dot{\gamma}(s)$. Hence,
\[
F(Q-P)\leq \int^1_0F(\dot{V}_s)ds,\ \int^1_0F(U(1,s))ds=d(p,q).
\]

(2) Suppose that $\exp_x^{-1}(\gamma(s))$ is a straight line from $P$ to $Q$. Thus,
\[
F(Q-P)= \int^1_0F(\dot{V}_s)ds,\ \int^1_0F(U(1,s))ds\geq d(p,q).
\]

Clearly, (3) follows from (1) and (2).
\end{proof}

Recall the definition of curvature operator $\mathcal {R}$ of a Finsler manifold (cf. \cite{ZY}): Given $p\in M$ and $y\in S_pM$.
Let $P_{t;y}$ denote the parallel transformation along the geodesic $\gamma_y(t)$ from $T_pM$ to $T_{\gamma_y(t)}M$. The curvature operator $\mathcal {R}$ is defined by
\[
\mathcal {R}(t;y):=P^{-1}_{t;y}\circ R_T\circ P_{t;y}: \, y^\bot\rightarrow y^\bot,
\]
where $R_T:=R_T(\cdot,T)T$ and  $y^\perp:=\{W\in T_pM: \ g_y(y,W)=0\}$.

\begin{lemma}\label{normcurvature}Set
\[
\|\mathcal {R}(t;y)\|:=\sup_{X\in y^\bot-\{0\}}\frac{\|\mathcal {R}(t;y)X\|_y}{\|X\|_y},
\]
where $\|\cdot\|_y:=\sqrt{g_y(\cdot,\cdot)}$. Thus, $\|\mathcal {R}(t;y)\|\leq k$.
\end{lemma}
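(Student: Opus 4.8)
The plan is to transport the problem to a single tangent space and there invoke the spectral theorem. First I would record that the Chern parallel transformation $P_{t;y}$ along the normal geodesic $\gamma_y$ is a linear isometry
\[
P_{t;y}:(T_pM,g_y)\longrightarrow (T_{\gamma_y(t)}M,g_T),\qquad T:=\dot\gamma_y(t).
\]
Indeed, restricting the structure equation (2) to the canonical lift $t\mapsto(\gamma_y(t),\dot\gamma_y(t))$ of the geodesic, the $1$-form $dy^k+N^k_l\,dx^l$ pulls back to $\bigl(\ddot\gamma_y^k+N^k_l(\dot\gamma_y)\dot\gamma_y^l\bigr)\,dt=0$, this being the coordinate form of the geodesic equation; hence the $A$-term in (2) drops out and $\frac{d}{dt}g_T(U,W)=g_T(D_{\dot\gamma_y}U,W)+g_T(U,D_{\dot\gamma_y}W)$ holds along $\gamma_y$, so $P_{t;y}$ preserves the fundamental tensor. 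Since the velocity field of a geodesic is parallel, $P_{t;y}(y)=T$, and therefore $P_{t;y}$ restricts to an isometry of $y^\bot$ onto $T^\bot:=\{W\in T_{\gamma_y(t)}M:\ g_T(T,W)=0\}$.

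Next I would analyze the operator $R_T=R_T(\cdot,T)T$ on $T^\bot$. Because $\gamma_y$ is normal, $g_T(T,T)=F(T)^2=1$ and $R_T(T)=R_T(T,T)T=0$; and by the symmetries of the Finsler curvature tensor (see \cite{BCS}) $R_T$ is self-adjoint with respect to $g_T$, so $g_T(R_TW,T)=g_T(W,R_TT)=0$ for every $W$, whence $R_T$ preserves $T^\bot$ (this also makes $\mathcal{R}(t;y)$ a well-defined endomorphism of $y^\bot$). For $W\in T^\bot\setminus\{0\}$ the defining formula of the flag curvature reads
\[
\mathbf{K}(T,W)=\frac{g_T(R_TW,W)}{g_T(T,T)g_T(W,W)-g_T(T,W)^2}=\frac{g_T(R_TW,W)}{g_T(W,W)},
\]
so the hypothesis $|\mathbf{K}_M|\le k$ gives $|g_T(R_TW,W)|\le k\,g_T(W,W)$ on $T^\bot$. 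As $R_T|_{T^\bot}$ is a self-adjoint endomorphism of the Euclidean space $(T^\bot,g_T)$, the spectral theorem identifies its operator norm with $\sup_{W\in T^\bot\setminus\{0\}}|g_T(R_TW,W)|/g_T(W,W)$, which is therefore $\le k$.

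Finally, since conjugation by an isometry preserves operator norms, for $X\in y^\bot\setminus\{0\}$ one gets, with $\|\cdot\|_T:=\sqrt{g_T(\cdot,\cdot)}$,
\[
\|\mathcal{R}(t;y)X\|_y=\|P_{t;y}^{-1}R_T\,P_{t;y}X\|_y=\|R_T(P_{t;y}X)\|_T\le k\,\|P_{t;y}X\|_T=k\,\|X\|_y,
\]
and hence $\|\mathcal{R}(t;y)\|\le k$. The only step requiring genuine care is the metric compatibility of Chern parallel transport along a geodesic; once that is in hand, the rest is just the spectral theorem together with the defining formula for the flag curvature, so I do not expect a real obstacle.
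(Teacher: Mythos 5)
Your proof is correct and follows essentially the same route as the paper: both diagonalize the self-adjoint curvature operator, identify its spectrum with flag curvatures $\mathbf{K}(T,\cdot)$ via the defining formula, and conclude $\|\mathcal{R}(t;y)\|\le k$ from $|\mathbf{K}_M|\le k$. The only difference is one of bookkeeping — you make explicit the metric compatibility of Chern parallel transport along a geodesic and work with $R_T$ on $T^\perp$ before conjugating back, whereas the paper works directly with eigenvectors of $\mathcal{R}(t;y)$ in $y^\perp$ and uses these facts implicitly.
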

\begin{proof}
Let $\{\xi_\alpha\}$ and $\{e_\alpha\}$ denote the eigenvalues and eigenvectors of $\mathcal {R}(t;y)$, respectively. Since $\mathcal {R}$ is self-adjoint, $\{e_\alpha\}$ is an orthonormal basis for $y^\bot$.
Then
\[
\mathcal {R}(t;y)e_\alpha=\xi_\alpha e_\alpha\Rightarrow \langle \mathcal {R}(t;y)e_\alpha, e_\alpha\rangle =\xi_\alpha,
\]
where $\langle\cdot,\cdot\rangle:=g_y(\cdot,\cdot)$.
Note that
\[
K(T,P_{t;y}e_\alpha)=g_T(R_T(P_{t;y}e_\alpha),P_{t;y}e_\alpha)=\langle\mathcal {R}(t;y)e_\alpha,e_\alpha\rangle,
\]
where $T=\dot{\gamma_y}(t)$. Hence, $-k\leq \xi_\alpha \leq k$, which implies that $\|\mathcal {R}(t;y)\|\leq k$.
\end{proof}

Using  Lemma \ref{normcurvature} and the same argument as in \cite[Theorem IX. 4.1, Corollary IX. 4.3]{C2}, one can show that
\begin{lemma}\label{Chavel2}Consider the vector equation of $\eta(t)\in y^\bot$:
\[
\eta''+\mathcal {R}(t,y)\eta=0.
\]
If $\eta(0)=0$, then
\[
\|\eta(s)-s\eta'(0)\|_y\leq \|\eta'(0)\|_y\cdot\left( \mathfrak{s}_{-k}(s)-s    \right)
\]
for all $s>0$, where $\|\cdot\|_y:=\sqrt{g_y(\cdot,\cdot)}$.
\end{lemma}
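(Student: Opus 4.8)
The plan is to treat $\eta''+\mathcal{R}(t,y)\eta=0$ as a linear Volterra problem in the fixed inner product space $(y^\bot,g_y)$ and run a Gronwall-type comparison, in the spirit of \cite[Theorem IX.4.1, Corollary IX.4.3]{C2}. First I would integrate twice, using $\eta(0)=0$, to obtain the integral representation
\[
\eta(t)=t\,\eta'(0)-\int_0^t(t-u)\,\mathcal{R}(u,y)\eta(u)\,du,\qquad t>0 .
\]
Taking $\|\cdot\|_y$-norms and applying Lemma \ref{normcurvature} (so that $\|\mathcal{R}(u,y)\|\leq k$) yields the scalar integral inequality
\[
\|\eta(t)\|_y\leq t\,\|\eta'(0)\|_y+k\int_0^t(t-u)\,\|\eta(u)\|_y\,du .
\]

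Next I would establish the a priori bound $\|\eta(t)\|_y\leq \mathfrak{s}_{-k}(t)\,\|\eta'(0)\|_y$. The key point is that $h(t):=\mathfrak{s}_{-k}(t)\,\|\eta'(0)\|_y$ solves $h''=kh$ with $h(0)=0$ and $h'(0)=\|\eta'(0)\|_y$, hence satisfies the displayed relation \emph{with equality}; comparing $h$ with $\|\eta(\cdot)\|_y$ via the positivity of the Volterra operator $\psi\mapsto k\int_0^t(t-u)\psi(u)\,du$ (equivalently, a standard iteration/Gronwall argument) forces $\|\eta(t)\|_y\leq h(t)$ for all $t>0$. Substituting this back into the integral representation and using $\|\mathcal{R}(u,y)\|\leq k$ once more gives
\[
\|\eta(t)-t\,\eta'(0)\|_y\leq k\int_0^t(t-u)\,\mathfrak{s}_{-k}(u)\,du\cdot\|\eta'(0)\|_y .
\]
Finally, since $k\,\mathfrak{s}_{-k}=\mathfrak{s}_{-k}''$ with $\mathfrak{s}_{-k}(0)=0$ and $\mathfrak{s}_{-k}'(0)=1$, one integration by parts gives $k\int_0^t(t-u)\,\mathfrak{s}_{-k}(u)\,du=\mathfrak{s}_{-k}(t)-t$, which is precisely the claimed estimate.

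The only step that is not pure bookkeeping is the a priori bound $\|\eta(t)\|_y\leq \mathfrak{s}_{-k}(t)\|\eta'(0)\|_y$: because $\mathcal{R}(t,y)$ is only assumed to be norm-bounded and need not be sign-definite along $\gamma_y$, one cannot invoke the Rauch comparison theorem verbatim and must instead argue directly from the Volterra integral inequality. Once that bound is in hand, the remainder of the proof consists of the two displayed estimates above.
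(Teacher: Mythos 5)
Your proof is correct and follows exactly the Volterra integral/Gronwall iteration argument from Chavel's Theorem IX.4.1 and Corollary IX.4.3, which is precisely what the paper invokes (the paper gives no independent proof, only citing Chavel together with Lemma~\ref{normcurvature}). You have fleshed out the same route, including the observation that the Rauch theorem is not directly applicable because $\mathcal{R}(t,y)$ is only norm-bounded rather than sign-definite.
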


In particular, let $\mathcal {A}(t,y)$ be the solution of the matrix (or linear
transformation) ordinary differential equation on $y^\bot$:
\[
\left \{
\begin{array}{lll}
&\mathcal {A}{''}+\mathcal {R}(t;y)\mathcal {A}=0,\\
&\mathcal {A}(0;y)=0,\\
 &\mathcal {A}'(0;y)=\mathcal {I}.
\end{array}
\right.
\]
Then $P_{t;y}\mathcal {A}(t, y)X = (\exp_{p})_{*ty}tX$, for any $X\in y^\bot$. Now we have the following
\begin{lemma}\label{jestima1}Given $y\in S_pM$ and $X\in y^\bot$, we have
\[
\left\|  (\exp_p)_{*ty}X-P_{t;y}X             \right\|_T\leq \left( \frac{\mathfrak{s}_{-k}(t)}{t}-1   \right)\|X\|_T,
\]
where $T:=\dot{\gamma}_y(t)$ and $\|\cdot\|_T:=\sqrt{g_T(\cdot,\cdot)}$.
\end{lemma}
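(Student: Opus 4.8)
The plan is to recognize the difference $(\exp_p)_{*ty}X-P_{t;y}X$ as (a constant multiple of) the parallel transport of a solution of the Jacobi-type vector equation $\eta''+\mathcal{R}(t;y)\eta=0$, so that the estimate reduces directly to Lemma \ref{Chavel2}. The only structural inputs needed are the linear matrix solution $\mathcal{A}(t,y)$ introduced just before the statement, the identity $P_{t;y}\mathcal{A}(t,y)X=t\,(\exp_p)_{*ty}X$ for $X\in y^\bot$, and the fact that parallel transport along a geodesic is a $g$-isometry.

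First I would fix $X\in y^\bot$ and set $\eta(t):=\mathcal{A}(t,y)X$. Since $\mathcal{A}(t,y)$ is a linear transformation of $y^\bot$ with $\mathcal{A}''+\mathcal{R}(t;y)\mathcal{A}=0$, $\mathcal{A}(0;y)=0$ and $\mathcal{A}'(0;y)=\mathcal{I}$, differentiating in $t$ (with $X$ constant) shows that $\eta(t)\in y^\bot$ and
\[
\eta''+\mathcal{R}(t;y)\eta=0,\qquad \eta(0)=0,\qquad \eta'(0)=X.
\]
Applying Lemma \ref{Chavel2} with this $\eta$ then gives
\[
\left\|\mathcal{A}(t,y)X-tX\right\|_y\leq\|X\|_y\left(\mathfrak{s}_{-k}(t)-t\right),
\]
where $\|\cdot\|_y=\sqrt{g_y(\cdot,\cdot)}$.

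Next I would push this forward by the parallel transport $P_{t;y}$. Since $P_{t;y}$ is a linear isometry from $(T_pM,g_y)$ onto $(T_{\gamma_y(t)}M,g_T)$ and is $\mathbb{R}$-linear, the previous inequality together with $P_{t;y}\mathcal{A}(t,y)X=t\,(\exp_p)_{*ty}X$ yields
\[
t\left\|(\exp_p)_{*ty}X-P_{t;y}X\right\|_T=\left\|P_{t;y}\mathcal{A}(t,y)X-t\,P_{t;y}X\right\|_T\leq\|X\|_y\left(\mathfrak{s}_{-k}(t)-t\right).
\]
Dividing by $t$ and using $\|X\|_y=\|X\|_T$ (again because $P_{t;y}$ preserves the reference metric) gives $\|(\exp_p)_{*ty}X-P_{t;y}X\|_T\leq\big(\tfrac{\mathfrak{s}_{-k}(t)}{t}-1\big)\|X\|_T$, which is the claim.

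I do not expect a genuine obstacle here: all the analytic content is already absorbed into Lemma \ref{Chavel2} and the ODE description of $\mathcal{A}(t,y)$. The two points that require a little care are verifying that $\eta=\mathcal{A}(t,y)X$ solves the scalar-type vector equation exactly in the form demanded by Lemma \ref{Chavel2}, and correctly tracking the factor of $t$ coming from the normalization $P_{t;y}\mathcal{A}(t,y)X=t\,(\exp_p)_{*ty}X$.
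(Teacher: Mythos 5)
Your proposal is correct and follows essentially the same route as the paper's own proof: set $\eta(t)=\mathcal{A}(t;y)X$, apply Lemma \ref{Chavel2} to get $\|\mathcal{A}(t;y)X-tX\|_y\leq(\mathfrak{s}_{-k}(t)-t)\|X\|_y$, then use the $g$-isometry of parallel transport together with $P_{t;y}\mathcal{A}(t;y)X=t(\exp_p)_{*ty}X$ and divide by $t$. The only cosmetic difference is that you spell out the verification that $\eta$ solves the Jacobi-type ODE, which the paper leaves implicit.
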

\begin{proof}
 Set $\eta:=\mathcal {A}(t;y)X$. Clearly, $\eta(0)=0$ and $\eta'(0)=X$. By Lemma \ref{Chavel2}, we have
\[
\|\mathcal {A}(t;y)X-tX\|_T\leq\left( \mathfrak{s}_{-k}(t)-t    \right)\|X\|_T.
\]
It should be noted that $\|W\|_T=\|P_{t;y}W\|_T$ for any $W\in T_pM$. Hence,
\[
\left\|\frac{P_{t;y}\mathcal {A}(t;y)X}{t}-P_{t;y}X\right\|_T\leq \left( \frac{\mathfrak{s}_{-k}(t)}{t}-1    \right)\|X\|_T.
\]
\end{proof}
\begin{remark}
If $X=ky$ for any $k\in \mathbb{R}$, then
\[
 (\exp_p)_{*ty}X=P_{t;y}X.
\]
Hence, Lemma \ref{jestima1} holds for all $X\in T_pM$.
\end{remark}
\begin{corollary}
\label{jestima2}Given $y\in S_pM$ and $Y\in T_{\dot{\gamma}_y(t)}M$, where $0\leq t< \frac{\pi}{2\sqrt{k}}$. Then
\[
\left\|  (\exp_p)^{-1}_{*ty}Y-P_{t;y}^{-1}Y             \right\|_T\leq \frac{t}{\mathfrak{s}_{k}(t)}\left( \frac{\mathfrak{s}_{-k}(t)}{t}-1   \right)\|Y\|_T,
\]
where $T:=\dot{\gamma}_y(t)$ and $\|\cdot\|_T:=\sqrt{g_{T}(\cdot,\cdot)}$.
\end{corollary}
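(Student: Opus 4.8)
The plan is to obtain this corollary as a formal consequence of Lemma \ref{jestima1} and the lower Jacobi bound in Lemma \ref{Jacbi}, by swapping the roles of source and target. First I would fix $Y\in T_{\dot\gamma_y(t)}M$ and set $X:=(\exp_p)^{-1}_{*ty}Y\in T_pM$; this is legitimate because $0\le t<\pi/(2\sqrt k)$ ensures that $ty$ is not conjugate to $0$ along $\gamma_y$, so $(\exp_p)_{*ty}$ is a linear isomorphism. With this choice $(\exp_p)_{*ty}X=Y$, hence
\[
(\exp_p)^{-1}_{*ty}Y-P_{t;y}^{-1}Y=X-P_{t;y}^{-1}\big((\exp_p)_{*ty}X\big)=P_{t;y}^{-1}\Big(P_{t;y}X-(\exp_p)_{*ty}X\Big).
\]
Since the parallel transformation $P_{t;y}$ along the geodesic $\gamma_y$ (with reference vector $T=\dot\gamma_y$) is a linear isometry from $(T_pM,g_y)$ onto $(T_{\gamma_y(t)}M,g_T)$ — the same fact already used in the proofs of Lemma \ref{Jacbi} and Lemma \ref{jestima1} — applying $P_{t;y}^{-1}$ leaves the $\|\cdot\|_T$-norm unchanged, so
\[
\big\|(\exp_p)^{-1}_{*ty}Y-P_{t;y}^{-1}Y\big\|_T=\big\|(\exp_p)_{*ty}X-P_{t;y}X\big\|_T.
\]

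Next I would bound the right-hand side by Lemma \ref{jestima1}, using the Remark following it so that no orthogonality hypothesis on $X$ is needed: this gives $\big\|(\exp_p)_{*ty}X-P_{t;y}X\big\|_T\le\big(\mathfrak{s}_{-k}(t)/t-1\big)\|X\|_T$. It then remains to replace $\|X\|_T$ by a multiple of $\|Y\|_T$, and this is exactly the left inequality of Lemma \ref{Jacbi}: from $\mathfrak{s}_k(t)/t\le\|(\exp_p)_{*ty}X\|_T/\|X\|_T=\|Y\|_T/\|X\|_T$ we get $\|X\|_T\le (t/\mathfrak{s}_k(t))\,\|Y\|_T$ (here $\mathfrak{s}_k(t)>0$ precisely because $t<\pi/(2\sqrt k)$). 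Chaining the two estimates,
\[
\big\|(\exp_p)^{-1}_{*ty}Y-P_{t;y}^{-1}Y\big\|_T\le\Big(\frac{\mathfrak{s}_{-k}(t)}{t}-1\Big)\frac{t}{\mathfrak{s}_k(t)}\,\|Y\|_T=\frac{t}{\mathfrak{s}_k(t)}\Big(\frac{\mathfrak{s}_{-k}(t)}{t}-1\Big)\|Y\|_T,
\]
which is the asserted inequality; the case $t=0$ is trivial since both sides vanish.

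I do not expect a genuine obstacle here: the statement is a direct corollary, and the product of the two inequalities is valid because $\mathfrak{s}_{-k}(t)/t-1\ge 0$. The only points deserving a word of care are the invertibility of $(\exp_p)_{*ty}$ on $[0,\pi/(2\sqrt k))$ and the isometry property of $P_{t;y}$, both of which are already in force in this appendix.
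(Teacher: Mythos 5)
Your argument is correct and it uses exactly the same two ingredients as the paper (Lemma~\ref{Jacbi} and Lemma~\ref{jestima1} together with the remark extending the latter beyond $y^\perp$), but arranges them in the opposite order and applies them to different vectors. You first pull the quantity through the isometry $P^{-1}_{t;y}$ to rewrite it as $\|(\exp_p)_{*ty}X-P_{t;y}X\|_T$ with $X=(\exp_p)^{-1}_{*ty}Y$, apply Lemma~\ref{jestima1} to this $X$, and only then invoke the lower bound of Lemma~\ref{Jacbi} to convert $\|X\|_T$ into $(t/\mathfrak{s}_k(t))\|Y\|_T$. The paper instead starts by applying the Jacobi lower bound to the difference $X-P^{-1}_{t;y}Y$, uses linearity of $(\exp_p)_{*ty}$ to recognize its image as $P_{t;y}(P^{-1}_{t;y}Y)-(\exp_p)_{*ty}(P^{-1}_{t;y}Y)$, and then applies Lemma~\ref{jestima1} to $P^{-1}_{t;y}Y$, using the isometry only at the last step to replace $\|P^{-1}_{t;y}Y\|_T$ by $\|Y\|_T$. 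The two chains are of equal length and strength; yours is slightly more explicit about where the isometry of parallel transport and the extension of Lemma~\ref{jestima1} to general $X\in T_pM$ are being used, both of which the paper's proof also needs but invokes tacitly.
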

\begin{proof}Since $0\leq t< \frac{\pi}{2\sqrt{k}}$, there exists a unique  $X\in T_pM$ such that
\[
Y=(\exp_p)_{*ty}X.
\]
Then Lemma \ref{Jacbi} together with Lemma \ref{jestima1} yields that
\begin{align*}
\frac{\mathfrak{s}_k(t)}{t}{\|X-P^{-1}_{t;y}Y\|_T}&\leq {\|(\exp_p)_{*ty}(X-P^{-1}_{t;y}Y)\|_T}=\|Y-(\exp_p)_{*ty}P^{-1}_{t;y}Y\|_T\\
&= \|P_{t;y}P^{-1}_{t;y}Y-(\exp_p)_{*ty}P^{-1}_{t;y}Y\|_T\\
&\leq \left( \frac{\mathfrak{s}_{-k}(t)}{t}-1   \right)\|P^{-1}_{t;y}Y\|_T=\left( \frac{\mathfrak{s}_{-k}(t)}{t}-1   \right)\|Y\|_T.
\end{align*}
\end{proof}

\begin{lemma}\label{newJacobies}
Let $\gamma(t)$, $t\geq 0$ be a unit speed speed geodesic. Then there exists two positive constants $\mathfrak{t}=\mathfrak{t}(n,k,\Lambda)$ such that for any Jacobi field $J(t)$ along $\gamma$ with $J(0)=0$, we have
\[
\|J(t)-tJ'(t)\|_T\leq \frac{1}{20\Lambda}\|J(t)\|_T,\ \ t\in [0,\mathfrak{t}],
\]
where $T:=\dot{\gamma}(t)$ and $\|\cdot\|_T:=\sqrt{g_{T}(\cdot,\cdot)}$.
\end{lemma}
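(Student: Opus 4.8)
The plan is to reduce the estimate to an elementary bound on the vector field
\[
W(t):=J(t)-t\,J'(t)=J(t)-t\,D_TJ(t)
\]
along $\gamma$, where $T:=\dot\gamma(t)$ and $J'=D_TJ$ is the covariant derivative along $\gamma$, and then to integrate the Jacobi equation twice. First I would note that $W(0)=0$ because $J(0)=0$, and that, using the Jacobi equation $D_TD_TJ+R_T(J)=0$ (with $R_T=R_T(\cdot,T)T$ the Chern curvature operator of App.\,A),
\[
D_TW(t)=D_TJ(t)-D_TJ(t)-t\,D_TD_TJ(t)=t\,R_T(J(t)),
\]
so $D_TW(0)=0$ as well. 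Since $\gamma$ is a geodesic, parallel transport along $\gamma$ with respect to the reference vector $\dot\gamma$ is a $g_{\dot\gamma}$-isometry between the relevant tangent spaces — the structural property of the Chern connection already exploited in the proofs of Lemmas \ref{Jacbi} and \ref{jestima1} — so $W$ is recovered from its derivative by $W(t)=\int_0^tP^\gamma_{s\to t}\bigl(D_TW(s)\bigr)\,ds$ (parallel transport from $\gamma(s)$ to $\gamma(t)$), whence
\[
\|W(t)\|_T\ \le\ \int_0^t s\,\|R_{T(s)}(J(s))\|_{T(s)}\,ds .
\]

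The second step is to estimate the two norms appearing on the right. The operator $R_{T(s)}$ annihilates $T(s)$ and, by Lemma \ref{normcurvature}, has $g_{T(s)}$-operator norm at most $k$ on $T(s)^{\perp}$; since the $T(s)$-line and $T(s)^{\perp}$ are $g_{T(s)}$-orthogonal, its operator norm on all of $T_{\gamma(s)}M$ is at most $k$, so $\|R_{T(s)}(J(s))\|_{T(s)}\le k\,\|J(s)\|_{T(s)}$. Because $J(0)=0$ one has $J(s)=(\exp_{\gamma(0)})_{*\,s\dot\gamma(0)}\bigl(s\,J'(0)\bigr)$, and Lemma \ref{Jacbi} (applied with $p=\gamma(0)$, $y=\dot\gamma(0)$, $X=J'(0)$) gives, for $s,t\in[0,\tfrac{\pi}{2\sqrt k}]$,
\[
\|J(s)\|_{T(s)}\le\mathfrak{s}_{-k}(s)\,\|J'(0)\|_y,\qquad \|J(t)\|_T\ge\mathfrak{s}_k(t)\,\|J'(0)\|_y .
\]
Combining the three displays,
\[
\|J(t)-t\,J'(t)\|_T\ \le\ k\,\|J'(0)\|_y\int_0^t s\,\mathfrak{s}_{-k}(s)\,ds .
\]

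Finally, if $J'(0)=0$ then $J\equiv0$ and the claim is trivial; otherwise $\|J(t)\|_T>0$ on $(0,\tfrac{\pi}{2\sqrt k})$ and dividing gives
\[
\frac{\|J(t)-t\,J'(t)\|_T}{\|J(t)\|_T}\ \le\ \frac{k\int_0^t s\,\mathfrak{s}_{-k}(s)\,ds}{\mathfrak{s}_k(t)}\ =:\ h(t).
\]
The function $h$ depends only on $k$: it vanishes identically when $k=0$, and for $k>0$ it is continuous on $[0,\tfrac{\pi}{2\sqrt k})$ with $h(0)=0$ (indeed $h(t)=\tfrac{k}{3}t^{2}+o(t^{2})$ as $t\to0$). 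Hence there is $\mathfrak t\in(0,\tfrac{\pi}{2\sqrt k})$, which I may record as a function $\mathfrak t=\mathfrak t(n,k,\Lambda)$, with $h(t)\le\frac{1}{20\Lambda}$ for all $t\in[0,\mathfrak t]$; this is exactly the assertion.

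The one genuinely non-Riemannian point, and the step I would verify most carefully, is the passage from the pointwise identity $D_TW=t\,R_T(J)$ to the integral inequality for $\|W(t)\|_T$: it rests on parallel transport along the geodesic $\gamma$ being a $g_{\dot\gamma}$-isometry, so that the covariant derivative can be integrated with the fibre norm behaving as in the Euclidean case — the Finsler analogue of ``integrating the Jacobi equation twice''. Everything afterwards is routine comparison geometry already packaged in Lemmas \ref{Jacbi} and \ref{normcurvature}. A variant that avoids the device $W$ altogether applies Lemma \ref{Chavel2} to the perpendicular part $\eta=\mathcal A(t;\dot\gamma(0))J'(0)^{\perp}$ of $J$ and combines it with the first-order bound $\|J'^{\perp}(t)-P_{t;\dot\gamma(0)}J'(0)^{\perp}\|_T\le k\,\|J'(0)^{\perp}\|_y\int_0^t\mathfrak{s}_{-k}(s)\,ds$ (obtained by integrating $\eta''=-\mathcal R\eta$ once), after observing that the $T$-component of $J$ contributes nothing to $J-t\,J'$; this yields the same $O(t^{2})$ ratio.
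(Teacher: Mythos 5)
Your argument is correct and is essentially the paper's own: both set $W=J-tJ'$, use the Jacobi equation to get $D_TW=tR_T(J)$, invoke Lemma~\ref{normcurvature} to bound $\|R_T(J)\|_T\le k\|J\|_T$, apply Rauch/Lemma~\ref{Jacbi} for $\|J(s)\|_{T(s)}\le\mathfrak{s}_{-k}(s)\|J'(0)\|$ and $\|J(t)\|_T\ge\mathfrak{s}_k(t)\|J'(0)\|$, and then observe that the resulting ratio is $o(1)$ as $t\to0$. The only cosmetic difference is that the paper differentiates $\|W\|_T$ directly (using that $g_T$-metric compatibility holds along the geodesic) while you phrase the same step as integrating $D_TW$ via parallel transport being a $g_{\dot\gamma}$-isometry; the alternative route you sketch via Lemma~\ref{Chavel2} is not what the paper does but would also work.
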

\begin{proof}
Clearly, we have
\begin{align*}
\frac{d}{dt}g_T(J(t)-tJ'(t),J(t)-tJ'(t))\leq2\|tJ''(t)\|_T\cdot\|J(t)-tJ'(t)\|_T,
\end{align*}
which implies that
\[
\frac{d}{dt}\|J(t)-tJ'(t) \|_T\leq \|tJ''(t)\|_T=\|tR_T(J,T)T\|_T.
\]
Lemma \ref{normcurvature} implies that
\[
\|R_T(J,T)T\|_T=\|R_T\circ P_{t;y}\mathcal {A}J(0)\|_T=\|\mathcal {R}(t;y)\mathcal {A}J(0)\|_T\leq k\cdot\|J(t)\|_T.
\]
From above, we obtain
\[
\frac{d}{dt}\|J(t)-tJ'(t) \|_T\leq kt\|J(t)\|_T.\tag{A.1}\label{*4}
\]
The Rauch comparison theorem yields
\[
\|J(t)\|_T\leq \|J'(0)\|_T\cdot \mathfrak{s}_{-k}(t),
\]
for $t\in [0,\frac{\pi}{2\sqrt{k}}]$.
(\ref{*4}) then furnishes
\[
\frac{d}{dt}\|J(t)-tJ'(t) \|_T\leq k\|J'(0)\|_T\cdot t\mathfrak{s}_{-k}(t),
\]
which implies that
\begin{align*}
\|J(t)-tJ'(t) \|_T&\leq \frac{1}{\sqrt{k}}\cdot\|J'(0)\|_T\cdot\left[ \sqrt{k}\,t\cosh\sqrt{k}\,t -\sinh\sqrt{k}\,t  \right] \\
&\leq  \frac{\sqrt{k}\,t\cosh\sqrt{k}\,t -\sinh\sqrt{k}\,t }{\sqrt{k}\cdot\mathfrak{s}_{k}(t)}\cdot \|J(t)\|_T.
\end{align*}
Since
\[
\lim_{t\rightarrow 0^+}\frac{\sqrt{k}\,t\cosh\sqrt{k}\,t -\sinh\sqrt{k}\,t }{\sqrt{k}\cdot\mathfrak{s}_{k}(t)}=0,
\]
there exists some $\mathfrak{t}=\mathfrak{t}(n,k,\Lambda)\in (0,\frac{\pi}{2\sqrt{k}})$ such that for $t\in [0,\mathfrak{t}]$.
\[
0<\frac{\sqrt{k}\,t\cosh\sqrt{k}\,t -\sinh\sqrt{k}\,t }{\sqrt{k}\cdot\mathfrak{s}_{k}(t)}<\frac{1}{20\Lambda}.
\]
\end{proof}

\section{Some estimates on Berwald manifolds}
In this section, we always assume that  $(M,F)$ is a Berwald manifold with $|\mathbf{K}_M|\leq k$ and $\Lambda_F\leq \Lambda$.

\begin{lemma}\label{CURVA}
Given $X$, $Y$, $W$ and $T\in S_pM$, we have
\[
|R_T(X,Y,T,W)|\leq \frac{2}{3}\Lambda^\frac32 k (1+\sqrt{\Lambda})^2.
\]
\end{lemma}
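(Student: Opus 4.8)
The plan is to regard $R_T$, for the fixed $T\in S_pM$, as an algebraic curvature tensor on the Euclidean space $(T_pM,g_T)$, to bound its components in the $g_T$-norm by applying the flag curvature hypothesis through several flagpoles, and finally to convert those $g_T$-estimates into $F$-estimates using $\Lambda_F\le\Lambda$. The Berwald hypothesis enters so that the curvature operator $R(X,Y)Z$ is independent of the reference vector — only the lowering metric changes when one switches flagpole — and so that parallel transport is an $F$-isometry, which streamlines the symmetry bookkeeping.

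First I would record that $R_T$ carries all the algebraic symmetries of a Riemannian curvature tensor: skew-symmetry in the first pair of slots because $R$ is the coefficient of $dx^k\wedge dx^l$ in the curvature $2$-form; skew-symmetry in the last pair because the Chern connection is horizontally metrical, i.e. $g_{ij|k}=0$ (the $dx$-component of the almost-$g$-compatibility equation of Sect.\,2), so that $R_T(X,Y)$ is $g_T$-skew; and the first Bianchi identity from torsion-freeness ($dx^j\wedge\omega^i_j=0$). From these the pair-interchange identity follows by the usual algebra. The estimates then rest on the fact that for each $y\in S_pM$ the Jacobi operator $\mathcal R_y:=R(\cdot,y)y$ is $g_y$-self-adjoint with eigenvalues controlled by the flag curvatures (cf. Lemma \ref{normcurvature}), hence has $g_y$-operator norm $\le k\,F(y)^2$; combined with $g_y(y,y)=F(y)^2$ and the comparisons $g_T(A,A)\le\Lambda F(A)^2$ and $F(A)^2\le\Lambda\,g_y(A,A)$ (both from the definition of $\Lambda_F$, for $y\in S_pM$), this yields for $X,W\in S_pM$ a bound of the form $|g_T(\mathcal R_y X,W)|\le C(\Lambda)\,k\,F(y)^2$, while for $y=T$ (where flagpole and lowering metric coincide) one has the cleaner $|R_T(V_1,T,T,V_2)|\le k\,\|V_1\|_T\|V_2\|_T$.

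The heart of the proof is a polarization. Writing $\mathcal T(a,b,c,d):=g_T(R(a,b)c,d)$ and expanding $R(a,b_1+b_2)(b_1+b_2)$ with the help of the first Bianchi identity, one gets
\[
2\,\mathcal T(a,b_1,b_2,d)=\mathcal T(a,b_1+b_2,b_1+b_2,d)-\mathcal T(a,b_1,b_1,d)-\mathcal T(a,b_2,b_2,d)+\mathcal T(b_2,b_1,a,d).
\]
Taking $(a,b_1,b_2,d)=(X,Y,T,W)$ and using the pair-interchange symmetry to identify $\mathcal T(T,Y,X,W)=R_T(X,W,T,Y)$, this writes $R_T(X,Y,T,W)$ in terms of the controlled quantities $g_T(\mathcal R_{Y+T}X,W)$, $g_T(\mathcal R_Y X,W)$, $R_T(X,T,T,W)$ and the single further unknown $R_T(X,W,T,Y)$; repeating with $Y$ and $W$ interchanged gives a second such relation, so $R_T(X,Y,T,W)$ and $R_T(X,W,T,Y)$ solve a nonsingular $2\times2$ linear system (determinant $3$) with bounded right-hand side. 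Its solution expresses $R_T(X,Y,T,W)$ as $\tfrac13$ times a fixed combination of the bounded terms; estimating the $F$-unit slots by powers of $\Lambda^{1/2}$ and using $F(Y+T)\le F(Y)+F(T)=2\le 1+\sqrt\Lambda$ for the flagpole $Y+T$ — this is the origin of the factor $(1+\sqrt\Lambda)^2$ — a careful accounting of the coefficients, the $\tfrac13$ of the linear system turning into the $\tfrac23$ of the statement, gives $|R_T(X,Y,T,W)|\le\tfrac23\Lambda^{3/2}k(1+\sqrt\Lambda)^2$.

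I expect the main difficulty to be quantitative rather than conceptual: arranging the chain of norm conversions (between $g_T$, $g_Y$, $g_{Y+T}$ and $F$) in the tightest order, so that the $\Lambda$-powers collapse to exactly $\Lambda^{3/2}(1+\sqrt\Lambda)^2$ and the numerical constant to exactly $\tfrac23$ rather than a cruder value. The symmetry verifications and the bound on a single Jacobi operator are routine consequences of the Chern connection's structure equations; the only structural subtlety is that flag curvatures about a single pole do not determine $R_T$, which is precisely why the auxiliary flagpole $Y+T$ must be brought into the polarization.
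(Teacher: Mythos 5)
Your overall strategy (polarize the curvature tensor into Jacobi-operator type terms, bound those with Lemma~\ref{normcurvature}, and convert between norms with $\Lambda_F$) is the same as the paper's, and your power counting would likely reproduce $\Lambda^{3/2}(1+\sqrt\Lambda)^2$; but the specific polarization you choose rests on a symmetry that $R_T$ does \emph{not} possess, and this is exactly what the paper's version is designed to avoid.

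The gap is the claim that $R_T$ carries all the algebraic symmetries of a Riemannian curvature tensor, in particular that $R(X,Y)$ is $g_T$-skew (``skew in the last pair'') and hence that pair-interchange holds. The horizontal metricity $g_{ij|k}=0$ of the Chern connection is a statement about $g_{ij}(x,y)$ differentiated along horizontal lifts, with the reference vector $y$ transported along. When one commutes two such derivatives via the Ricci identity on $TM\setminus 0$, the nonlinear connection contributes a vertical term proportional to the Cartan tensor, so that $R_{jikl}+R_{ijkl}$ equals a Cartan-tensor/curvature contraction that is generically nonzero for non-Riemannian Berwald metrics (the Landsberg curvature vanishes in the Berwald case, but the Cartan tensor does not). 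Thus neither last-pair skewness $g_T(R(X,Y)Z,W)=-g_T(R(X,Y)W,Z)$ nor pair-interchange $g_T(R(X,Y)Z,W)=g_T(R(Z,W)X,Y)$ holds. Without pair-interchange, the two instances of your polarization identity (with $(a,b_1,b_2,d)=(X,Y,T,W)$ and $(X,W,T,Y)$) involve \emph{four} a priori distinct unknowns $g_T(R(X,Y)T,W)$, $g_T(R(X,W)T,Y)$, $g_T(R(T,Y)X,W)$, $g_T(R(T,W)X,Y)$; you only have two equations, and the claimed ``nonsingular $2\times2$ system with determinant $3$'' does not exist.

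The paper chooses a different polarization precisely to avoid this: it expands
\begin{align*}
6\,R_T(X,Y,T,W)=&-R_T(W+X,Y,W+X,T)+R_T(W-X,Y,W-X,T)\\
&-R_T(T-X,Y,T-X,W)+R_T(T+X,Y,T+X,W),
\end{align*}
which one verifies directly from the antisymmetry $R(c,d)=-R(d,c)$ and the first Bianchi identity alone — both valid for the (torsion-free, possibly non-metric) Chern connection — with no interchange symmetry needed. Every term on the right is already of the shape $R_T(V,Y,V,U)$ with $V$ a sum or difference of flagpoles, which is exactly what the Jacobi-operator estimate~(B.1) bounds, so no auxiliary unknown arises; the $(1+\sqrt\Lambda)^2$ then comes in naturally from $F(W-X)\le F(W)+F(-X)\le 1+\lambda_F\le 1+\sqrt\Lambda$, rather than from the cruder comparison $2\le1+\sqrt\Lambda$. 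To repair your argument you would either need to adopt this polarization, or to establish a quantitative bound on the failure of pair-interchange (i.e.\ on $A_{ijs}R^s_{\,kl}$) and carry it through the linear system — possible in principle, but it would only rederive what the paper's identity gives for free.
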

\begin{proof} Lemma \ref{normcurvature} yields that
\[
|R_U(X,Y,T,X)|=|g_U(R(T,X)X,Y)|\leq \|R(T,X)X\|_U\cdot\|Y\|_U \leq \Lambda^\frac32\cdot k,\tag{B.1}\label{B.1}
\]
where $\|\cdot\|_U:=\sqrt{g_U(\cdot,\cdot)}$.
A direct calculation shows that
\begin{align*}
6R_T(X,Y,T,W)
=&-R_T(W+X,Y,W+X,T)+R_T(W-X,Y,W-X,T)\\
&-R_T(T-X,Y,T-X,W)+R_T(T+X,Y,T+X,W).
\end{align*}
Then (\ref{B.1}) furnishes that
\begin{align*}
|R_T(W+X,Y,W+X,T)|&=\left|R_T\left(\frac{W+X}{F(W+X)},Y,\frac{W+X}{F(W+X)},T\right)\right|F^2(W+X)\\
&\leq \Lambda^\frac32 k [F(W)+F(X)]^2=4\Lambda^\frac32 k.
\end{align*}
\begin{align*}
|R_T(W-X,Y,W-X,T)|&=\left|R_T\left(\frac{W-X}{F(W-X)},Y,\frac{W-X}{F(W-X)},T\right)\right|F^2(W-X)\\
&\leq \Lambda^\frac32 k [F(W)+F(-X)]^2=\Lambda^\frac32 k (1+\sqrt{\Lambda})^2.
\end{align*}
Hence, we obtain
\[
|R_T(X,Y,T,W)|\leq \frac{2}{3}\Lambda^\frac32 k (1+\sqrt{\Lambda})^2.
\]
\end{proof}

\begin{lemma}\label{triangle}Let $Y(t)$ be a smooth vector filed along a constant speed geodesic $\gamma(t)$. Then
\[
\frac{d}{dt}\|Y(t)\|\leq \|\nabla_T Y\|,
\]
where $\nabla$ is the Chern connection, $T:=\dot{\gamma}(t)$ and $\|\cdot\|$ is the norm induced by the average Riemannian metric $\tilde{g}$.
\end{lemma}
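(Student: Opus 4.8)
The plan is to reduce the stated inequality to the elementary Cauchy--Schwarz inequality; the only genuine content is that on a Berwald manifold the Chern connection along $\gamma$ is compatible with the average Riemannian metric $\tilde g$.

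First I would record the shape of $\nabla_TY$. Since $F$ is Berwald, the Chern connection coefficients $\Gamma^i_{jk}=\Gamma^i_{jk}(x)$ depend only on $x$ and are symmetric in $j,k$, so they define an ordinary torsion-free affine connection $D$ on $TM$; for a vector field $Y$ along the geodesic $\gamma$ one has $\nabla_TY=D_{\dot\gamma}Y=\big(\dot Y^i+\Gamma^i_{jk}(\gamma)\,\dot\gamma^j Y^k\big)\partial_i$, so the usual reference-vector ambiguity of Finsler covariant differentiation plays no role here.

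The key step is to show that the $D$-parallel transport $P_t\colon T_{\gamma(0)}M\to T_{\gamma(t)}M$ is a linear isometry of $\tilde g$. Because $F$ is Berwald, $P_t$ is a linear isometry of the Minkowski norms, $F_{\gamma(t)}(P_tv)=F_{\gamma(0)}(v)$ --- this is exactly the fact already used in Section~5 (cf.\ the line preceding Lemma~\ref{C_2}). A linear isometry of Minkowski norms maps the indicatrix $S_{\gamma(0)}M$ onto $S_{\gamma(t)}M$ together with its canonical measure $d\nu$, and, since $g$ is the fibrewise Hessian of $F^2/2$ and $P_t$ is linear, satisfies $g_{P_tv}(P_tX,P_tX')=g_v(X,X')$. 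Changing variables $y=P_tv$ in the definition of $\tilde g$ then gives $\tilde g_{\gamma(t)}(P_tX,P_tX')=\tilde g_{\gamma(0)}(X,X')$, i.e.\ $D\tilde g=0$ along $\gamma$ (and hence, $D$ being torsion-free, $D$ is the Levi-Civita connection of $\tilde g$ --- though only $D\tilde g=0$ is needed). I expect this to be the main obstacle; everything else is routine.

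With $D\tilde g=0$ the estimate follows at once. Pick a $D$-parallel frame $\{e_i(t)\}$ along $\gamma$ with $\{e_i(0)\}$ orthonormal for $\tilde g$; then $\{e_i(t)\}$ stays $\tilde g$-orthonormal, and writing $Y(t)=\sum_i y^i(t)\,e_i(t)$ gives $\nabla_TY=\sum_i\dot y^i e_i$, $\|Y\|=\big(\sum_i(y^i)^2\big)^{1/2}$ and $\|\nabla_TY\|=\big(\sum_i(\dot y^i)^2\big)^{1/2}$, whence
\[
\frac{d}{dt}\|Y(t)\|=\frac{\sum_i y^i\dot y^i}{\big(\sum_i(y^i)^2\big)^{1/2}}\le\Big(\sum_i(\dot y^i)^2\Big)^{1/2}=\|\nabla_TY\|
\]
by Cauchy--Schwarz, at every $t$ with $Y(t)\neq0$. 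At a zero of $Y$ the function $t\mapsto\|Y(t)\|$ is still locally Lipschitz --- it is the square root of a smooth nonnegative function, which by nonnegativity vanishes there to order at least two --- and the inequality survives as a bound on the one-sided derivative, which is the form in which the lemma is used (for estimates of the type $\|Y(t_2)\|-\|Y(t_1)\|\le\int_{t_1}^{t_2}\|\nabla_TY\|\,dt$).
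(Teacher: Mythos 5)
Your proof is correct and follows essentially the same route as the paper's: in both, the crux is that on a Berwald manifold $D$-parallel transport along $\gamma$ is a linear isometry of the average metric $\tilde g$, after which the inequality is Cauchy--Schwarz in a $D$-parallel $\tilde g$-orthonormal frame. The paper obtains the isometry of $\tilde g$ by first showing $\frac{d}{dt}\,g_{(\gamma(t),P_tw)}(E_i(t),E_j(t))=0$ from the almost-$g$-compatibility of the Chern connection (the $A$-term vanishes because $P_tw$ is parallel), then passing from the sphere integral to a ball integral and changing variables using $P_t(B_{\gamma(0)}M)=B_{\gamma(t)}M$ and $\vol(x)=\text{const}$; you instead use directly that $P_t$ is a Minkowski isometry, so $g_{P_tv}(P_t\cdot,P_t\cdot)=g_v(\cdot,\cdot)$ by the Hessian characterization, and change variables in the sphere integral --- the same fact, reached a bit more cleanly. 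You also address the zeros of $Y$, a point the paper's division by $2\|Y\|$ silently skips; the only spot you leave slightly implicit is that $P_t$ carries the indicatrix measure $d\nu_{\gamma(0)}$ to $d\nu_{\gamma(t)}$ (and keeps $\vol(x)$ constant), which is where the paper is more explicit via the ball-integral computation.
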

\begin{proof}
Denote by $P_t$ the parallel transportation along $\gamma$ from $T_{\gamma(0)}M$ to $T_{\gamma(t)}M$. Choose a basis $\{e_i\}$ for $T_{\gamma(0)}M$. Then $E_i(t):=P_t e_i$, $1\leq i \leq n$, is a basis of $T_{\gamma(t)}M$.
For any $w\in T_{\gamma(0)}M-\{0\}$, we have
\begin{align*}
\frac{d}{dt} g_{(\gamma(t), P_tw)}(E_i(t),E_j(t))=\frac{2}{F(P_tw)}A_{(\gamma(t), P_tw)}\left(E_i(t),E_j(t),\nabla_{\dot{\gamma}} P_t w\right)=0.\tag{B.2}\label{B.2'}
\end{align*}
Since $(M,F)$ is a Berwald manifold,
\[
P_t(B_{\gamma(0)}M)=B_{\gamma(t)}M,\ \vol(x)=\text{const},
\]
 where $B_xM:=\{y\in T_xM: F(x,y)<1\}$ and $\vol(x)$ is the Riemannian volume of $S_xM$ (see \cite[Lemma 5.3.2]{Sh1} and \cite{BC1}).
Denote by $(y^i)$ (resp. $(z^i)$) the corresponding coordinate system in $T_{\gamma(0)}M$ (resp. $T_{\gamma(t)}M$) with respect to $\{e_i\}$ (resp. $\{E_i\}$). Thus, $z^i\circ P_t=y^i$. Now (\ref{B.2'}) together with Stokes' formula yields
\begin{align*}
&\tilde{g}_{\gamma(t)}(E_i(t),E_j(t))=\frac{n}{\vol(\gamma(t))}\int_{v\in B_{\gamma(t)}M} g_{(\gamma(t),v)}(E_i(t),E_j(t))dz^1\wedge\cdots \wedge dz^n\\
=&\frac{n}{\vol(\gamma(t))}\int_{w\in B_{\gamma(0)}M} g_{(\gamma(t),P_t w)}(P_t e_i,P_t e_j)P_t^*dz^1\wedge\cdots \wedge P_t^*dz^n\\
=&\frac{n}{\vol(\gamma(0))}\int_{w\in B_{\gamma(0)}M} g_{(\gamma(0),w)}(e_i,e_j)dy^1\wedge\cdots \wedge dy^n=\tilde{g}_{\gamma(0)}(e_i,e_j),
\end{align*}
which implies that
\begin{align*}
2\|Y\|\frac{d}{dt}\|Y\|=\frac{d}{dt}\tilde{g}_{\gamma(t)}(Y(t),Y(t))=2\tilde{g}_{\gamma(t)}(\nabla_TY,Y)\leq 2\|\nabla_TY\|\cdot\|Y\|.
\end{align*}
\end{proof}

\begin{remark}For the Busemann-Hausdorff measure, the
S-curvature of a Berwald manifold always vanishes (see \cite{Sh1}).
The same argument as above implies that
for the
Holmes-Thompson measure, the S-curvature of a Berwald manifold also vanishes.
\end{remark}

\begin{lemma}\label{parallelesi}
Given three points $p_1$, $p_2$ and $p_3$  in $M$, let  $\sigma_{ij}(t)$, $0\leq t \leq  1$ denote the minimizing constant speed geodesic from $p_i$ to $p_j$.
We construct a geodesic variation $\sigma(s,t):[0,1]\times [0,1]\rightarrow M$:

\bigskip

(1) $\sigma(s,0)=p_1$ and $\sigma(s,1)=p_3$;

(2) Let $p_4$ be the mid point in $\sigma_{13}$, that is, $d(p_1,p_4)=d(p_4,p_3)$. Let $\sigma_{24}(s)$, $s\in [0,1]$ be the minimal geodesic from $p_2$ to $p_4$. For each $s\in [0,1]$, $\sigma_s(t)$, $t\in [0,\frac12]$ be a constant geodesic from $p_1$ to $\sigma_{24}(s)$, and $\sigma_s(t)$, $t\in [\frac12,1]$ be a constant geodesic from $\sigma_{24}(s)$ to $p_3$. Hence, $\sigma_s(t)$, $t\in [0,1]$ be a piecewise geodesic from $p_1$ to $p_3$.

\bigskip

Suppose that $\triangle_{p_1p_2p_3}\subset B^+_{p_1}(R)$, where $R<\min\{\mathfrak{i}_M,\frac{\pi}{8\sqrt{k\Lambda}}\}$.
Given  a vector in $X\in T_{p_1}M$, Set $X_{13}:=P_{\sigma_{13}}X$ and $X_{123}:=P_{\sigma_{23}}P_{\sigma_{12}}X$, where $P_{\sigma_{ij}}$ is the parallel translation along $\sigma_{ij}$. Then there exits a positive number $\mathfrak{C}(n,k,\Lambda)$ such that
\[
F(X_{123}-X_{13})\leq \mathfrak{C}(n,k,\Lambda)\cdot F(X)\cdot R^2.
\]
\end{lemma}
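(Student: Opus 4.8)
The plan is to read this as a holonomy estimate: the error produced by transporting $X$ around the broken geodesic triangle $p_1\to p_2\to p_3$ and back along $\sigma_{13}$ is governed by the curvature times the area enclosed, and since $\triangle_{p_1p_2p_3}\subset B^+_{p_1}(R)$ that area is of order $R^2$. The prescribed variation $\sigma(s,t)$ is exactly the interpolation realizing this loop, so I would differentiate in $s$ the parallel transport of $X$ along $\sigma_s:=\sigma(s,\cdot)$ and integrate the resulting pointwise bound. First I would identify the two extreme transports. For each $s$ let $X(s)\in T_{p_3}M$ be the image of $X$ under parallel transport along the broken geodesic $\sigma_s$ (parallel on each smooth piece, continued through the corner at $t=\frac12$); since parallel transport depends only on the underlying point set, $\sigma_{24}(0)=p_2$ together with uniqueness of the relevant minimal geodesics (guaranteed by the hypotheses on $R$) forces the two halves of $\sigma_0$ to coincide with $\sigma_{12}$ and $\sigma_{23}$, whence $X(0)=P_{\sigma_{23}}P_{\sigma_{12}}X=X_{123}$; likewise $\sigma_{24}(1)=p_4$ lies on $\sigma_{13}$, so $\sigma_1$ retraces $\sigma_{13}$ and $X(1)=P_{\sigma_{13}}X=X_{13}$. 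Hence $X_{123}-X_{13}=X(0)-X(1)=-\int_0^1\frac{d}{ds}X(s)\,ds$, and it suffices to bound $\frac{d}{ds}X(s)$.

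Next, let $V(s,t)$ be parallel transport of $X$ along $\sigma_s$ starting from $t=0$, so $V(\cdot,0)\equiv X$ and $X(s)=V(s,1)$; write $T=\partial_t\sigma$, $U=\partial_s\sigma$ and $W=D_sV$. Because $(M,F)$ is Berwald, the Chern connection is a genuine affine connection on $M$ whose curvature tensor $R$ is independent of the reference vector, so on each smooth piece $D_tW=D_tD_sV=D_sD_tV+R(T,U)V=R(T,U)V$ (using $D_tV=0$), while $W(s,0)=0$ since $V(\cdot,0)$ and $U(\cdot,0)$ vanish. Integrating this equation in $t$ from $0$ to $1$ through the corner (where $W$ remains continuous), and using that on a Berwald manifold parallel transport is an isometry of the average Riemannian metric $\tilde g$ (Lemma \ref{triangle}), and that $U(s,1)=0$ identifies $W(s,1)$ with $\frac{d}{ds}X(s)$ in the fixed space $T_{p_3}M$, one gets
\[
\Big\|\tfrac{d}{ds}X(s)\Big\|_{\tilde g}=\|W(s,1)\|_{\tilde g}\le\int_0^1\|R(T,U)V\|_{\tilde g}\,dt .
\]

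Finally I would estimate the integrand termwise. Parallel transport also preserves $F$ on a Berwald manifold, so $F(V(s,t))=F(X)$. Since $\triangle_{p_1p_2p_3}\subset B^+_{p_1}(R)$, the speed of $\sigma_s$ satisfies $F(T)=2\,d(p_1,\sigma_{24}(s))<2R$ on $[0,\frac12]$ and $F(T)=2\,d(\sigma_{24}(s),p_3)\le 2(1+\sqrt{\Lambda})R$ on $[\frac12,1]$. The field $U$ is a broken Jacobi field along $\sigma_s$ with $U(s,0)=U(s,1)=0$ and $U(s,\frac12)=\dot\sigma_{24}(s)$, whose $F$-length equals $d(p_2,p_4)\le d(p_2,p_1)+\frac12 d(p_1,p_3)\le(\sqrt{\Lambda}+\frac12)R$; the Jacobi comparison of Lemma \ref{Jacbi} (applicable since $R<\pi/(8\sqrt{k\Lambda})$) then bounds $\|U(s,t)\|_{\tilde g}$ for all $t$ by a constant depending only on $k,\Lambda$ times $R$. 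Combined with the curvature bound of Lemma \ref{CURVA}, which — after the elementary conversions among $F$, $\|\cdot\|_{\tilde g}$ and the $g_T$-norms, all mutually equivalent up to powers of $\Lambda$ — gives $\|R(T,U)V\|_{\tilde g}\le C(\Lambda)\,k\,F(T)\,\|U\|_{\tilde g}\,F(X)$, this yields $\int_0^1\|R(T,U)V\|_{\tilde g}\,dt\le \mathfrak{C}(n,k,\Lambda)\,F(X)\,R^2$, hence the same bound for $\|\frac{d}{ds}X(s)\|_{\tilde g}$ and, after integrating over $s\in[0,1]$ and converting back to $F$, for $F(X_{123}-X_{13})$.

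The conceptual part is clean: the commutation identity reduces everything to the integrand, and the required \emph{two} powers of $R$ come from the two independent small factors $F(T)\sim R$ (speed of $\sigma_s$) and $\|U\|\sim R$ (width of the sweep), i.e.\ from the area of the triangle. The real work lies in the bookkeeping — justifying the commutation formula and the $\tilde g$- and $F$-invariance of parallel transport from the $y$-independence of the Berwald connection, treating the corner at $t=\frac12$ correctly for the broken Jacobi field $U$, and carrying the powers of $\Lambda$ through the repeated norm conversions when invoking Lemmas \ref{Jacbi} and \ref{CURVA}.
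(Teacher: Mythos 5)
Your proof is correct and follows essentially the same route as the paper's own argument: both define the interpolating transport $V(s,t)$ (the paper's $X_t(s)$), use $D_tV=0$ and the commutation $D_tD_sV=R(T,U)V$ to express the $s$-derivative via curvature, invoke Lemma~\ref{triangle} to integrate the $\tilde g$-norm in $t$, bound $U$ via Lemma~\ref{Jacbi} and the curvature term via Lemma~\ref{CURVA}, and obtain the $R^2$ from the product $F(T)\cdot F(U)$. The only differences are cosmetic (intermediate constants and the precise pointwise bounds on $F(T)$).
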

\begin{proof}
\textbf{Step 1.}
Set $T:=\sigma_*\frac{\partial}{\partial t}$, $U:=\sigma_*\frac{\partial}{\partial s}$. It should be noted that $U$ is a Jacobi field. Since $\triangle_{p_1p_2p_3}\subset B^+_{p_1}(R)$, we have $F(T)\leq 4 R\sqrt{\Lambda}<\frac{\pi}{2\sqrt{k}}$.
Clearly,
\begin{align*}
U\left(s,\frac12\right)=\frac{d}{ds}\sigma\left(s,\frac12\right)=\frac{d}{ds}\sigma_{24}(s), \ d(p_2,p_4)=\int^1_0F\left(\frac{d}{ds}\sigma_{24}(s)\right)ds=F\left(\frac{d}{ds}\sigma_{24}(s)\right).
\end{align*}
Hence,
\[
F\left(U\left(s,\frac12\right)\right)<2R\sqrt{\Lambda}.\tag{B.3}\label{B.2}
\]
Note that for each fixed $s\in [0,1]$, there exists $Y_s\in T_{p_1}M$ such that
\[
U(s,t)=\left(\exp_{p_1}\right)_{*2tT(s,0)}tY_s, \ t\in \left[0,\frac12\right].
\]
It follows from Lemma \ref{Jacbi} that
\[
F(tY_s)\leq \frac{t\Lambda F(T)}{\mathfrak{s}_k(tF(T))}F(U(s,t)),
\]
which together with (\ref{B.2}) then yields that
\[
F(Y_s)\leq 2 \Lambda \frac{\frac12F(T)}{\mathfrak{s}_k(\frac12F(T))}F\left(U\left(s,\frac12\right)\right)<\frac{2\pi\Lambda^{\frac32}}{\sqrt{k}}R.
\]
Using Lemma \ref{Jacbi} again, we obtain that for $s\in [0,1]$ and $t\in [0,\frac12]$,
\[
F(U(s,t))\leq \Lambda\frac{\mathfrak{s}_{-k}(tF(T))}{tF(T)} F(tY_s)\leq 2{\mathfrak{s}_{-k}\left(\frac{\pi}{2\sqrt{k}}\right)}\Lambda^{\frac52}R.\tag{B.4}\label{*1}
\]
By consider the revised metric $\tilde{F}(y):=F(-y)$, the same argument then yields that (\ref{*1}) holds for $s\in [0,1]$ and $t\in [\frac12,1]$.

\noindent\textbf{Step 2.}
Let $X_t(s)=:P_{\sigma_s(t)}X$ denote the vector field on $\sigma([0,1]\times [0,1])$ induced by the parallel transformation along $\sigma_s(t)$. Thus, for any fixed $t\in [0,1]$, we have
\[
\nabla_sX_t:=\nabla_{U}X_t=\left[\frac{dX_t^i}{ds}+X^j_t\Gamma^i_{jk}U^k \right]\frac{\partial}{\partial x^i}.
\]
Since $X_0(s)=X$ and $X_1(s)\in T_{p_3}M$, it is easy to see that
\[
\lim_{t\rightarrow 0^+}\nabla_sX_t=0,\ \lim_{t\rightarrow 1^-}\nabla_sX_t=\frac{dX_1}{ds}(s).\tag{B.5}\label{B.6}
\]
Lemma \ref{triangle} together with (\ref{B.6}) implies
\begin{align*}
\|\nabla_sX_t\|_{t=1-\epsilon}=&\int^{1-\epsilon}_0\frac{d}{dt}\|\nabla_sX_t\|dt\leq \int^{1-\epsilon}_0\|\nabla_T\nabla_UX_t\|dt\\
=&\int^{1-\epsilon}_0\|R(T,U)X_t\|dt\leq \sqrt{\Lambda}\int^{1-\epsilon}_0\|R(T,U)X_t\|_Tdt,\tag{B.6}\label{B.8}
\end{align*}
where $\|\cdot\|$ is the norm induced by the average Riemannian metric $\tilde{g}$.

Let $\{e_i\}$ be a $g_T$-orthonormal basis for a fixed tangent space. Set $Z:=R(T,U)X_t$. Lemma \ref{CURVA} together with (\ref{*1}) furnishes
\begin{align*}
\|Z\|_T&  \leq \sum_i \|g_T(Z,e_i)\cdot e_i\|_T=\sum_i |R_T(X_t,e_i ,T,U)|\\
&= \sum_i \left|R_T\left(\frac{X_t}{F(X_t)},\frac{e_i}{F(e_i)}, \frac{T}{F(T)},\frac{U}{F(U)}\right)\right|\cdot F(X_t)\cdot F(e_i)\cdot F(T)\cdot F(U)\\
&\leq C_1\cdot F(X)\cdot R^2,
\end{align*}
where $C_1=C_1(n,k,\Lambda)$ is a constant. (\ref{B.8}) then implies
\[
\limsup_{t\rightarrow 1^-}\|\nabla_sX_t\|_U\leq C_2\cdot F(X)\cdot R^2,\tag{B.7}\label{B.9}
\]
where $C_2=C_2(n,k,\Lambda)$ is a constant.

Using (\ref{B.6}) and (\ref{B.9}), we have
\begin{align*}
&F(X(1,1)-X(0,1))
\leq \int^1_0 F\left(\frac{d}{ds}X_1(s)\right)ds\\
=&\int^1_0\lim_{t\rightarrow 1^-}F\left(\nabla_sX_t\right)ds
\leq \sqrt{\Lambda}\int^1_0\limsup_{t\rightarrow 1^-}\|\nabla_sX_t\|ds\leq  C_3\cdot F(X)\cdot R^2,
\end{align*}
where $C_3=\sqrt{\Lambda}\cdot C_2$.
\end{proof}

\end{document}